\renewcommand{\bf}{\bm}
\renewcommand{\top}{{\rm T}}
\title{
Adaptive integration of nonlinear evolution equations on tensor manifolds
\thanks{
This research was supported by the U.S. Army Research Office 
grant W911NF1810309, and by the U.S. Air Force 
Office of Scientific Research grant FA9550-20-1-0174.
}}
\author{Abram Rodgers
\and Alec Dektor
\and Daniele Venturi}
\institute{A. Rodgers \at
Department of Applied Mathematics,
University of California Santa Cruz, Santa Cruz (CA) 95064\\
\email{akrodger@ucsc.edu}
\and
A. Dektor \at 
Department of Applied Mathematics,
University of California Santa Cruz, Santa Cruz (CA) 95064\\
\email{adektor@ucsc.edu}
\and
D. Venturi \at
Department of Applied Mathematics,
University of California Santa Cruz, Santa Cruz (CA) 95064\\
\email{venturi@ucsc.edu}
}
\date{}
\begin{document}

\maketitle

\begin{abstract}
We develop new adaptive algorithms for temporal 
integration of nonlinear evolution 
equations on tensor manifolds.
These algorithms, which we call step-truncation 
methods, are based on performing one time 
step with a conventional time-stepping 
scheme, followed by a truncation operation 
onto a tensor manifold. 
By selecting the rank of the tensor manifold 
adaptively to satisfy stability and accuracy 
requirements, 
we prove convergence of a wide range 
of step-truncation methods, including 
explicit one-step and multi-step methods.
These methods are very easy to implement 
as they rely only on arithmetic operations 
between tensors, which can be performed by 
efficient and scalable parallel algorithms.
Adaptive step-truncation methods 
can be used to compute numerical solutions 
of high-dimensional PDEs, which, have become 
central to many new areas of application 
such optimal mass transport, random dynamical 
systems, and mean field optimal control.
Numerical applications are presented and 
discussed for a Fokker-Planck equation
with spatially dependent drift on a flat 
torus of dimension two and four.
\keywords{
High-dimensional PDEs \and
Low-rank tensor manifolds \and
Dynamical tensor approximation\and 
Tensor train\and 
Hierarchical Tucker format.}
\end{abstract}


\section{Introduction}
\label{sec:intro}

Consider the initial value problem 
\begin{align}
\frac{\partial f({\bf x},t) }{\partial t} = 
{\cal N}\left(f({\bf x},t),{\bf x}\right), \qquad 
f({\bf x},0) = f_0({\bf x}),
\label{nonlinear-ibvp} 
\end{align}
where $f:  \Omega \times [0,T] \to\mathbb{R}$ 
is a $d$-dimensional (time-dependent) scalar field 
defined on the domain $\Omega\subseteq \mathbb{R}^d$ 
($d\geq 2$), and $\cal N$ is a nonlinear operator which 
may depend on the variables 
${\bf x}=(x_1,\ldots,x_d)$ and may 
incorporate boundary conditions.
By discretizing \eqref{nonlinear-ibvp} in $\Omega$, 
e.g., by finite differences, finite 
elements, or pseudo-spectral methods, 
we obtain the system of ordinary differential equations 
\begin{equation}
\label{mol-ode}
\frac{d{\bf f}(t)}{dt} = {\bf N}({\bf f}(t)), 
\qquad {\bf f}(0)={\bf f}_0.
\end{equation}
Here, ${\bf f}:[0,T]\rightarrow
{\mathbb R}^{n_1\times n_2 \times \cdots \times n_d}$ 
is a multi-dimensional array of real numbers (the solution tensor), 
and $\bf N$ is a tensor-valued nonlinear map  
(the discrete form of $\mathcal{N}$ corresponding 
to the chosen spatial discretization). 
The number of degrees of freedom associated 
with the solution ${\bf f}(t)$ to the Cauchy problem \eqref{mol-ode} 
is $N_{\text{dof}}=n_1 \cdot n_2 \cdots n_d$ at each 
time $t\geq 0$, which can 
be extremely large even for 
small $d$. For instance, the solution 
to the Boltzmann-BGK equation
on a $6$-dimensional flat torus \cite{BoltzmannBGK2020}  
with $128$ points in each variable $x_i$ ($i=1,\ldots,6)$ 
yields $N_{\text{dof}}=128^6=4398046511104$ degrees 
of freedom at each time $t$.  

In order to reduce the number of degrees of freedom 
in the solution tensor ${\bf f}(t)$, we 
seek a representation of the solution  
on a low-rank tensor format
{
\cite{Kolda,Cho2016,dolgov2012fast,chertkov2021solution}} 
for all $t \geq 0$. 
To complement the low-rank structure of ${\bf f}(t)$, 
we also represent the operator $\bf N$ in a 
compatible low-rank format, allowing for an efficient 
computation of ${\bf N}({\bf f}(t))$ at each time $t$ 
in \eqref{mol-ode}. 
One method for the temporal integration 
of \eqref{mol-ode} on a smooth tensor manifold with 
constant rank \cite{uschmajew2013geometry,Holtz_2012} 
is dynamic tensor approximation  
\cite{lubich2013dynamical,koch2010dynamtucker,dektor2020dynamically,Alec2020}. 
This method keeps the solution ${\bf f}(t)$ on the
tensor manifold for all $t \geq 0$ by integrating 
the projection of \eqref{mol-ode} 
onto the tangent space of the manifold 
foward in time.
While such an approach has proven effective, 
it also has inherent computational drawbacks. 
Most notably, the system of evolution equations 
arising from the projection of \eqref{mol-ode}
onto the tangent space of the tensor manifold 
contains inverse auto-correlation matrices 
which may become ill-conditioned as time 
integration proceeds. This problem was addressed 
in \cite{Lubich2014,Lubich_2015} by using operator 
splitting time integration methods 
(see also \cite{unc_int}). 
\begin{figure}[t]
\begin{center}
\includegraphics[scale=0.35]{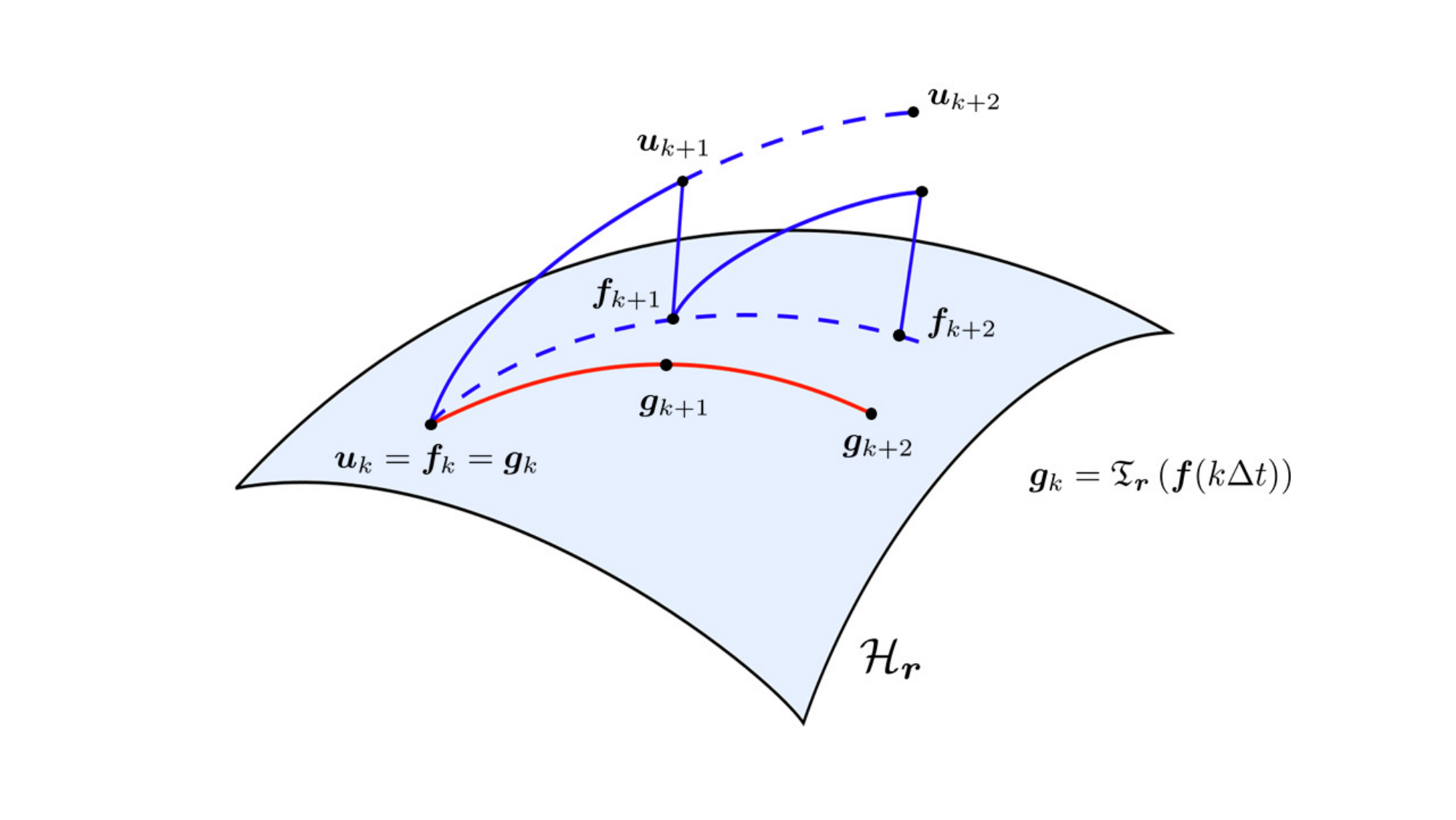}
\end{center}
\caption{
A graphical representation of a step-truncation method
to integrate equation \eqref{mol-ode} on a 
Hierarchical Tucker tensor manifold ${\cal H}_{\bf r}$
with multilinear rank $\bf r$.
The ${\bf u}_k$
denote time snapshots of the numerical approximation 
to \eqref{mol-ode} obtained using a conventional time-stepping scheme, 
the ${\bf g}_k$ are time snapshots of 
the exact solution ${\bf f}(k\Delta t)$ 
projected onto ${\cal H}_{\bf r}$, and 
the ${\bf f}_k$ are time snapshots of 
the step-truncation solution on ${\cal H}_{\bf r}$.
}
\end{figure}
A different class of algorithms to integrate 
the Cauchy problem \eqref{mol-ode} on a 
low-rank tensor manifold was recently proposed 
in \cite{rodgers2020stability,kieri2019projection,venturi2018,VenturiSpectral}.
These algorithms are based on integrating the solution 
${\bf f}(t)$ off the tensor manifold 
for a short time using any conventional time-stepping 
scheme, and then mapping it back onto the manifold 
using a tensor truncation operation. We will refer to 
these methods as {\em step-truncation} methods. 
To describe these methods further, let us 
discretize the ODE \eqref{mol-ode} in time with a conventional 
one-step scheme on an evenly-spaced temporal grid 
\begin{equation}
{{\bf u}}_{k+1} = {\bf u}_k+\Delta t 
{\bf \Phi}({\bf N},{\bf u}_k,\Delta t),
\label{AB2}
\end{equation}
where ${\bf u}_{k}$ denotes an approximation of 
${\bf f}(k\Delta t)$ for $k=0,1,\ldots$, 
and $\bf \Phi$ is an increment function.
To obtain a step-truncation scheme, we simply apply 
a nonlinear projection (truncation operator), denoted 
by $\mathfrak{T}_{\bf r}(\cdot)$, onto a tensor manifold 
with multilinear rank $\bf r$ 
\cite{grasedyck2010hierarchical,grasedyck2018distributed,kressner2014algorithm,parr_tensor,BoltzmannBGK2020} 
to the scheme \eqref{AB2}. This yields 
\begin{equation}
{\bf f}_{k+1} = \mathfrak{T}_{\bf r}\left({\bf f}_k+\Delta t 
{\bf \Phi}\left({\bf N},{\bf f}_k,\Delta t\right)\right),
\label{AB2-ST}
\end{equation} 
where ${\bf f}_{k}$ here denotes an approximation of 
$\mathfrak{T}_{\bf r}({\bf f}(k\Delta t))$ for $k=0,1,\ldots$. 
The need for tensor rank-reduction when iterating 
\eqref{AB2} can be easily understood by noting that 
tensor operations such as the application of 
an operator to a tensor and the addition between 
two tensors naturally increase tensor rank
\cite{kressner2014algorithm}. 
Hence, iterating \eqref{AB2} with no rank 
reduction can yield a fast increase 
in tensor rank, which, in turn, can tax 
computational resources significantly.
Step-truncation algorithms of the form \eqref{AB2-ST} 
were subject to a thorough error analysis in 
\cite{kieri2019projection}, where  convergence 
results were obtained in the context of 
fixed-rank tensor integrators, i.e., integrators in which 
the tensor rank $\bf r$ in \eqref{AB2-ST} is kept 
constant at each time step.

In this paper, we develop {\em adaptive} step-truncation 
algorithms in which the tensor rank $\bf r$ is selected 
at each time step based on desired accuracy 
and stability constraints.
These methods are very simple to implement 
as they rely only on arithmetic operations
between tensors, which can be performed
by efficient and scalable parallel algorithms 
\cite{daas2020parallel,AuBaKo16,grasedyck2018distributed}.

The paper is organized as follows.
In section \ref{sec:varying-rank-integrators}, we 
review low-rank integration techniques 
for time-dependent tensors, including dynamic 
approximation and step-truncation methods.
In section \ref{sec:lte-svd-st}, we develop
a new criterion for tensor rank adaptivity 
based on local error estimates.
In section \ref{sec:global-error},
we prove convergence 
of a wide range rank-adaptive step-truncation
algorithms, including one-step methods 
of order 1 and 2, and multi-step methods of 
arbitrary order.
In section \ref{sec:geom-interpreation}
we establish a connection between rank-adaptive
step-truncation methods and rank-adaptive dynamical
tensor approximation.
{
In section \ref{sec:numerical-apps}
we present and discuss numerical applications of the proposed
algorithms.
In particular, we study a prototype problem with 
rapidly varying rank and a Fokker-Planck equation
with spatially dependent drift on 
a flat torus of dimension two and four.
}

\section{Low-rank integration of time-dependent tensors}
\label{sec:varying-rank-integrators}

Denote by ${\cal H}_{\bf r}\subseteq 
\mathbb{R}^{n_1\times n_2\times\cdots\times n_d}$ 
the manifold of hierarchical Tucker tensors with 
multilinear rank ${\bf r}= \{r_{\alpha}\}$ corresponding to 
a prescribed dimension tree $\alpha \in {\cal T}_d$
\cite{uschmajew2013geometry}.
\begin{remark}
\label{remark:all-tensors-have-rank}
Every tensor ${\bf f}\in\mathbb{R}^{n_1\times n_2\times\cdots
\times n_d}$ has an exact hierarchical Tucker (HT) 
decomposition \cite{grasedyck2010hierarchical}. 
Thus, if $\bf f$ is not the zero tensor, then 
$\bf f$ belongs to a manifold ${\cal H}_{\bf r}$ 
for some $\bf r$. 
\end{remark}
We begin by introducing three maps which 
are fundamental to the analysis of low-rank tensor 
integration. 
First, we define the nonlinear map 
\begin{equation}
\label{ht-best}
\begin{aligned}
{\mathfrak T}_{\bf r}^{\text{best}} : \, &\mathbb{R}^{n_1\times n_2\times\cdots
\times n_d} \to \overline{\cal H}_{\bf r},\\
&\qquad {\bf f}\to {\mathfrak T}_{\bf r}^{\text{best}}({\bf f}) = 
\underset{{\bf h}\in \overline{\cal H}_{\bf r}}{\text{argmin}}\ 
\left\|{\bf f} - {\bf h}\right\|_2.
\end{aligned}
\end{equation}
Here, $\overline{\cal H}_{\bf r}$ denotes the closure 
of the tensor manifold ${\cal H}_{\bf r}$ and contains all tensors of 
multilinear rank smaller than or equal to $\bm r$ \cite{uschmajew2013geometry}. 
The map \eqref{ht-best} provides the optimal rank-$\bf r$ approximation of a tensor ${\bf f} \in \mathbb{R}^{n_1\times n_2\times\cdots\times n_d}$. 
%
The second map, known as high-order singular value decomposition 
(HOSVD) \cite{grasedyck2018distributed}, is 
defined as a composition of linear maps
obtained from a sequence of singular value 
decompositions of appropriate matricizations of the tensor
$\bf f$ . Such map can be written explicitely as 
\begin{equation}
\begin{aligned}
{\mathfrak T}_{\bf r}^{\text{SVD}} : \, &\mathbb{R}^{n_1\times n_2\times\cdots
\times n_d} \to \overline{\cal H}_{\bf r} , \\
&\bf f\to{\mathfrak T}_{\bf r}^{\text{SVD}}({\bf f}) =
\prod_{\alpha \in{\cal T}_d^p}{\bf P}_{\alpha}
\cdots
\prod_{\alpha \in{\cal T}_d^1}{\bf P}_{\alpha}{\bf f},
\label{ht-SVD}
\end{aligned}
\end{equation}
where ${\cal T}_d^1\dots{\cal T}_d^p$ are
the layers of the dimension tree ${\cal T}_d$. 
The map \eqref{ht-SVD} provides a quasi-optimal rank-$\bf r$ approximation 
of the tensor ${\bf f} \in\mathbb{R}^{n_1\times n_2\times\cdots
\times n_d}$, and is related to the optimal rank-$\bf r$ truncation 
by the inequalities \cite{grasedyck2010hierarchical}
\begin{align}
\label{svd-trunc-close}
\left\|{\bf f}- {\mathfrak T}_{\bf r}^{\text{best}}({\bf f})\right\|_2\leq
\left\|{\bf f}- {\mathfrak T}_{\bf r}^{\text{SVD}}({\bf f})\right\|_2
\leq
\sqrt{2d-3}
\left\|{\bf f}- {\mathfrak T}_{\bf r}^{\text{best}}({\bf f})\right\|_2.
\end{align}
When combined with linear multistep integration schemes, 
the sub-optimal approximation \eqref{ht-SVD} 
has proven to yield stable step-truncation methods 
\cite{rodgers2020stability}.
The third map we define is an orthogonal projection onto 
the tangent space $T_{\bf f}{\cal H}_{\bf r}$ of ${\cal H}_{\bf r}$ 
at the point $\bf f$. 
This projection is defined by the minimization problem 
\begin{equation}
\label{ht-best-tan}
\begin{aligned}
{\cal P}_{\bf f}  : \, &\mathbb{R}^{n_1\times n_2\times
\cdots\times n_d} \to T_{\bf f}{\cal H}_{\bf r}, \\
&{\bf v}\to {\cal P}_{\bf f} {\bf v} = \underset{{\bf h}\in T_{\bf f}{\cal H}_{\bf r}}{\text{argmin}}\ 
\left\|{\bf v} - {\bf h}\right\|_2,
\end{aligned}
\end{equation}
which is a linear function of 
${\bf v}$ ($\bf v$ is the solution
to a linearly constrained least squares problem).

With these three maps defined, hereafter we 
describe two methods for integrating \eqref{mol-ode} on 
the manifold ${\cal H}_{\bm r}$. 
Before doing so, let us discretize the 
temporal domain of interest $[0, T]$ into $N+1$
evenly-spaced\footnote{In order 
to streamline our presentation, we will develop our 
theory using evenly-spaced temporal grids.  
{
A similar} theory can be developed for grids with 
variable time step size.}
time instants,
\begin{equation}
\label{time_discretization}
t_i = i \Delta t, \qquad 
\Delta t = \frac{T}{N}, \qquad i=0,1,\ldots,N,
\end{equation} 
and let
\begin{equation}
\label{discrete-time-ode}
{\bf u}_{k+1} = {\bf u}_k+\Delta t 
{\bf \Phi}({\bf N},{\bf u}_k,\Delta t) 
\end{equation}
be a convergent one-step scheme\footnote{As is well known,
the scheme \eqref{discrete-time-ode} 
includes all explicit Runge-Kutta methods \cite{HairerErnst1993SODE}. For example, 
the Heun method (explicit RK2) 
takes the form
\eqref{discrete-time-ode} with 
\begin{equation}
\nonumber
{\bf \Phi}({\bf N},{\bf f}_k,\Delta t) =
\frac{1}{2}\left[{\bf N}({\bf f}_k)+{\bf N}\left({\bf f}_k+\Delta t {\bf N}({\bf f}_k)\right)\right].
\end{equation}
Explicit linear multistep methods can be expressed 
in a similar form by replacing the argument ${\bf f}_k$
with an array $\{{\bf f}_k,{\bf f}_{k-1},\ldots\}$ 
(see section \ref{subsec:adaptive-adams}).}
approximating the solution to the initial value problem
\eqref{mol-ode}. In \eqref{discrete-time-ode}, ${\bf u}_k$ denotes the numerical solution to \eqref{mol-ode} at
time instant $t_k$.

\subsection{Best tangent space projection (B-TSP) method}

The first method we present maps the initial condition 
${\bf f}_0$ onto the manifold ${\cal H}_{\bm r}$ 
using either \eqref{ht-best} or \eqref{ht-SVD} and then 
utilizes the orthogonal projection 
\eqref{ht-best-tan} to project ${\bf N}({\bf f}(t))$ 
onto the tangent space $T_{\bf f}{\cal H}_{\bf r}$ at 
each time. We write this method as (see 
\cite{lubich2013dynamical})
\begin{equation}
\frac{{ d}{\bf  w}}{{d} t} 
={\cal P}_{{\bf w}}{\bf N}({\bf  w}),  
\qquad {\bf w}(0) = {\mathfrak T}_{\bf r}({\bf f}_0),
\label{tan-proj-method}
\end{equation}
where ${\mathfrak T}_{\bf r}$ is either the mapping in 
\eqref{ht-best} or \eqref{ht-SVD}.
Discretizing \eqref{tan-proj-method} 
with a one-step method \eqref{discrete-time-ode} 
yields the fully discrete scheme 
\begin{equation}
 \label{discrete-proj-dynam}
 {\bf w}_{k+1} =
 {\bf w}_k + 
 \Delta t{\bf \Phi} \left(
 {\cal P}_{{\bf w}}{\bf N},
     {\bf w}_k,\Delta t\right) , 
     \qquad {\bf w}(0) = {\mathfrak T}_{\bf r}({\bf f}_0).
\end{equation}
While the scheme \eqref{discrete-proj-dynam} 
has proven effective, {
explicitly} computing the orthogonal projection 
${\cal P}_{{\bf w}}{\bf N}({\bf  w})$ comes with computational drawbacks. 
Most notably, inverse auto-correlation matrices of tensor modes 
appear in the projection \cite{lubich2013dynamical} (see also
\cite{Lubich2014,koch2007dynamical,koch2010dynamtucker}). 
If the tensor solution is comprised 
of small singular values, then the auto-correlation matrices are 
ill-conditioned. It has been shown in \cite{Lubich2014} 
that this phenomenon is due to the curvature of the 
tensor manifold ${\cal H}_{\bf r}$ being inversely 
proportional to the smallest singular value 
present in the tensor solution.
Thus, special care is required when choosing 
an integration scheme for \eqref{tan-proj-method}.
Operator splitting methods \cite{Lubich2014,Lubich_2015} and unconventional 
integration schemes \cite{unc_int} have been introduced to integrate 
\eqref{tan-proj-method} when the tensor solution is 
comprised of small singular values.
{
It has also been shown in \cite{kieri2019projection}
that by using an extrinsic representation, the artificial 
stiffness due to the tensor manifold curvarture 
can be avoided.
}
Since this method comes from a 
minimization principle over the tensor 
manifold tangent space, we refer to it as 
the best tangent space projection (B-TSP) method.

\subsection{Step-truncation methods (B-ST, SVD-ST)}
The second method we present allows the  
solution to leave the tensor manifold 
${{\cal H}}_{\bf r}$, and then maps 
it back onto the manifold at each time step.
Applying either \eqref{ht-best} or \eqref{ht-SVD}
to the right hand side of \eqref{discrete-time-ode}  
results in a {\em step-truncation} method
\begin{align}
 \label{one-step-trunc-best}
 {\bf f}_{k+1} =&
 {\mathfrak T}_{{\bf r}}^{\text{best}}\left({\bf f}_k + 
 \Delta t{\bf \Phi}({\bf N},{\bf f}_k,\Delta t)\right) ,\\
 \label{one-step-trunc-svd}
 {\bf f}_{k+1} =&
 {\mathfrak T}_{{\bf r}}^{\text{SVD}}\left({\bf f}_k + 
 \Delta t{\bf \Phi}({\bf N},{\bf f}_k,\Delta t)\right),
\end{align}
which is a low-rank tensor approximation 
to \eqref{mol-ode}. We will refer to 
\eqref{one-step-trunc-best} as
the fixed-rank best 
step-truncation (B-ST) method and 
to \eqref{one-step-trunc-svd} 
as the fixed-rank SVD step-truncation 
(SVD-ST) method. This definition 
emphasizes that the multivatiate tensor 
rank ${\bf r}$ does not change with time.
The schemes \eqref{one-step-trunc-best} and 
\eqref{one-step-trunc-svd} were studied extensively 
in \cite{kieri2019projection}. One of the main 
findings is that a low-rank approximability 
condition is required in order to obtain error 
estimates for the low-rank tensor approximation 
to \eqref{mol-ode}. 
The low-rank approximability condition can be written as 
\begin{equation}
	\label{low-rank-approx}
	\left | \left |
		({\bf I} - {\cal P}_{ {\bf \bar f}}) N({ {\bf \bar f}})
	\right | \right |_2
	\leq E ,\qquad E>0,
\end{equation}
for all ${ {\bf \bar f}}\in {\cal H}_{\bf r}$ 
in a suitable neighbourhood of the exact solution. 
Under this assumption, it can be shown that 
a one-step integration scheme with arbitrary order 
increment function $\bf \Phi$ applied to 
\eqref{one-step-trunc-best} or \eqref{one-step-trunc-svd} 
results in an approximation to \eqref{mol-ode}
with error dominated by $E$.
As an alternative to the fixed-rank schemes 
\eqref{one-step-trunc-best}-\eqref{one-step-trunc-svd} 
combined with the low-rank approximability assumption \eqref{low-rank-approx}, we propose the following {\em rank-adaptive} 
step-truncation schemes 
\begin{align}
 \label{one-step-trunc-best_adapt_rank}
 {\bf f}_{k+1} =&
 {\mathfrak T}_{{\bf r}_{k}}^{\text{best}}\left({\bf f}_k + 
 \Delta t{\bf \Phi}({\bf N},{\bf f}_k,\Delta t)\right) ,\\
 \label{one-step-trunc-svd_adapt_rank}
 {\bf f}_{k+1} =&
 {\mathfrak T}_{{\bf r}_{k}}^{\text{SVD}}\left({\bf f}_k + 
 \Delta t{\bf \Phi}({\bf N},{\bf f}_k,\Delta t)\right).
\end{align}
The selection of a new rank ${\bf r}_k$ at each time step 
allows us to obtain convergence results for step-truncation schemes 
without assuming \eqref{low-rank-approx}. 
We will refer to the schemes \eqref{one-step-trunc-best_adapt_rank} and \eqref{one-step-trunc-svd_adapt_rank} as rank-adaptive B-ST and rank-adaptive 
SVD-ST, respectively.

\section{Consistency of step-truncation methods}
\label{sec:lte-svd-st}
In this section, we prove a number of consistency 
results for step-truncation methods. In particular, 
we show that the fixed-rank step-truncation method 
\eqref{one-step-trunc-best} is consistent with 
the B-TSP method \eqref{discrete-proj-dynam}, 
and the rank-adaptive step-truncation methods 
\eqref{one-step-trunc-best_adapt_rank}-\eqref{one-step-trunc-svd_adapt_rank} are consistent 
with the  fully discrete system \eqref{discrete-time-ode} 
(provided the truncation ranks are chosen 
to satisfy a suitable criterion).
Our analysis begins with stating a few known results for 
the truncation operator ${\mathfrak T}_{\bf r}^{\text{best}}$.
Consider the formal power series expansion of 
${\mathfrak T}_{\bf r}^{\text{best}}$ around $\bf f\in {\cal H}_{\bf r}$
\begin{equation}
{\mathfrak T}_{\bf r}^{\text{best}}({\bf f}
+\varepsilon {\bf v}) = {\mathfrak T}_{\bf r}^{\text{best}}({\bf f})+ \varepsilon \frac{\partial \mathfrak T_{\bf r}^{\text{best}}({\bf f})}{\partial \bf f}{\bf v}
+\cdots,
\label{perturbationseries}
\end{equation}
where $\partial \mathfrak T_{\bf r}^{\text{best}}({\bf f})/
\partial \bf f$ denotes the Jacobian of 
${\mathfrak T}_{\bf r}^{\text{best}}$ at 
${\bf f}$, 
${\bf v}\in\mathbb{R}^{n_1\times n_2\times\cdots
\times n_d}$, and $\varepsilon \in \mathbb{R}$ is small. 
Since ${\bf f}\in {\cal H}_{\bf r}$,
we have that ${\mathfrak T}_{\bf r}^{\text{best}}({\bf f}) 
= {\bf f}$, which allows us to write \eqref{perturbationseries}
as 
\begin{equation}
{\mathfrak T}_{\bf r}^{\text{best}}({\bf f}
+\varepsilon {\bf v}) = {\bf f} + 
\varepsilon \frac{\partial \mathfrak T_{\bf r}^{\text{best}}({\bf f})}{\partial \bf f}{\bf v}
+\cdots .
\label{perturbationseries1}
\end{equation}
In the following Lemma we show that the Jacobian 
$\partial \mathfrak T_{\bf r}^{\text{best}}({\bf f})/\partial \bf f$ 
coincides with the orthogonal projection 
\eqref{ht-best-tan} onto the tangent space 
$T_{\bf f}{\cal H}_{\bf r}$. 
\begin{lemma}[Smoothness of the best truncation operator]
\label{trunc-smooth-thm}
The map ${\mathfrak T}_{\bf r}^{\text{best}}(\cdot)$ is
continuously differentiable on ${\cal H}_{\bf r}$. Moreover, 
\begin{equation}
\nonumber
\frac{\partial {\mathfrak T}_{\bf r}^{\text{best}}({\bf f})}
{\partial{\bf f}}= {\cal P}_{{\bf f}}, \qquad \forall {\bf f} \in {\cal H}_{\bf r},
\end{equation}
where ${\cal P}_{{\bf f}}$ is the orthogonal projection \eqref{ht-best-tan} 
onto the tangent space of ${\cal H}_{\bf r}$ at ${\bf f}$.
\end{lemma}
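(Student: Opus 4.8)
The plan is to reduce the statement to the classical fact that the nearest-point (metric) projection onto a smooth embedded submanifold of Euclidean space is continuously differentiable in a tubular neighbourhood of the manifold, with differential along the manifold equal to the orthogonal tangent-space projection. First I would recall that ${\cal H}_{\bf r}$ is a smooth embedded submanifold of $\mathbb{R}^{n_1\times\cdots\times n_d}$ \cite{uschmajew2013geometry}, and that for ${\bf f}\in{\cal H}_{\bf r}$ the minimiser in \eqref{ht-best} is attained at ${\bf f}$ itself, so ${\mathfrak T}_{\bf r}^{\text{best}}({\bf f})={\bf f}$. The first task is localisation: I would show that there is a neighbourhood $U$ of ${\bf f}$ in $\mathbb{R}^{n_1\times\cdots\times n_d}$ such that for every ${\bf g}\in U$ the best approximation in $\overline{\cal H}_{\bf r}$ is unique and lies in the smooth stratum ${\cal H}_{\bf r}$ rather than on the lower-rank boundary $\overline{\cal H}_{\bf r}\setminus{\cal H}_{\bf r}$. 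On $U$ the operator ${\mathfrak T}_{\bf r}^{\text{best}}$ therefore coincides with the metric projection onto the smooth manifold ${\cal H}_{\bf r}$.

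Second, for the $C^1$ regularity I would invoke the implicit function theorem. Choosing a smooth local parametrisation $\phi$ of ${\cal H}_{\bf r}$ with $\phi(0)={\bf f}$, the minimiser ${\mathfrak T}_{\bf r}^{\text{best}}({\bf g})=\phi(\xi)$ is characterised by the first-order stationarity condition $F_i({\bf g},\xi):=\langle {\bf g}-\phi(\xi),\partial_{\xi_i}\phi(\xi)\rangle=0$. Differentiating in $\xi$ at $({\bf g},\xi)=({\bf f},0)$, the Hessian term involving $\partial^2_{\xi_i\xi_j}\phi$ drops because the residual ${\bf f}-\phi(0)$ vanishes, leaving the Jacobian $\partial F/\partial\xi=-G$, where $G_{ij}=\langle\partial_{\xi_i}\phi(0),\partial_{\xi_j}\phi(0)\rangle$ is the Gram matrix of a basis of $T_{\bf f}{\cal H}_{\bf r}$. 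Since $G$ is positive definite, the Jacobian is invertible, and the implicit function theorem yields a $C^1$ map ${\bf g}\mapsto\xi({\bf g})$, hence continuous differentiability of ${\mathfrak T}_{\bf r}^{\text{best}}=\phi\circ\xi$ near ${\bf f}$; as this holds for every ${\bf f}\in{\cal H}_{\bf r}$ and ${\cal P}_{\bf f}$ varies continuously, the map is continuously differentiable on ${\cal H}_{\bf r}$.

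Third, to identify the derivative I would differentiate the optimality condition in coordinate-free form. Along the line ${\bf g}(\varepsilon)={\bf f}+\varepsilon{\bf v}$ with arbitrary ambient direction ${\bf v}\in\mathbb{R}^{n_1\times\cdots\times n_d}$, set ${\bf h}(\varepsilon)={\mathfrak T}_{\bf r}^{\text{best}}({\bf g}(\varepsilon))$, so ${\bf h}(0)={\bf f}$ and ${\bf h}'(0)\in T_{\bf f}{\cal H}_{\bf r}$ because ${\bf h}(\varepsilon)\in{\cal H}_{\bf r}$. The stationarity condition reads $\langle {\bf g}(\varepsilon)-{\bf h}(\varepsilon),W(\varepsilon)\rangle=0$ for every smooth tangent field $W(\varepsilon)\in T_{{\bf h}(\varepsilon)}{\cal H}_{\bf r}$; differentiating at $\varepsilon=0$ and using that the residual ${\bf g}(0)-{\bf h}(0)={\bf 0}$ annihilates the term containing $W'(0)$ leaves $\langle {\bf v}-{\bf h}'(0),W(0)\rangle=0$ for all $W(0)\in T_{\bf f}{\cal H}_{\bf r}$. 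Thus ${\bf v}-{\bf h}'(0)$ is orthogonal to the tangent space while ${\bf h}'(0)$ is tangent, which is precisely the orthogonal splitting of ${\bf v}$; hence ${\bf h}'(0)={\cal P}_{\bf f}{\bf v}$, and since ${\bf v}$ is arbitrary this gives $\partial{\mathfrak T}_{\bf r}^{\text{best}}({\bf f})/\partial{\bf f}={\cal P}_{\bf f}$.

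The step I expect to be the main obstacle is the localisation argument of the first paragraph: guaranteeing that the best approximation is \emph{unique} and stays in the smooth stratum ${\cal H}_{\bf r}$ for all ${\bf g}$ near ${\bf f}$, so that the machinery of the smooth metric projection applies. This is essentially the statement that ${\cal H}_{\bf r}$ has positive reach at ${\bf f}$; it follows from the smoothness and embeddedness of ${\cal H}_{\bf r}$ together with the positive definiteness of $G$ (equivalently, the non-degeneracy of the second-order optimality conditions that underlies the invertibility in the implicit function theorem), but it must be handled with care because $\overline{\cal H}_{\bf r}$ itself is only a closed semialgebraic set and fails to be a manifold near its lower-rank boundary.
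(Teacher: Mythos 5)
Your argument is correct, but it follows a genuinely different route from the paper's own proof in Appendix \ref{sec:appendix-bst-analysis}. You obtain $C^1$ regularity from the implicit function theorem applied to the first-order stationarity conditions $\langle{\bf g}-\phi(\xi),\partial_{\xi_i}\phi(\xi)\rangle=0$, exploiting that the residual vanishes on ${\cal H}_{\bf r}$ so the Jacobian reduces to the negative of the (positive definite) Gram matrix, and you then identify the derivative coordinate-free by differentiating the variational characterization along a line, the vanishing residual again annihilating the curvature term involving $W'(0)$. The paper instead first proves Lemma \ref{add-open-lemma} --- a determinant/polynomial argument specific to the HT format, showing that adding a small tangent vector does not drop the multilinear rank --- and then builds explicit tubular coordinates ${\bf C}({\bf v},{\bf g})={\bf q}_{\bf f}({\bf v})+{\bf M}({\bf q}_{\bf f}({\bf v})){\bf g}$, asserts ${\mathfrak T}_{\bf r}^{\text{best}}({\bf C}({\bf v},{\bf g}))={\bf q}_{\bf f}({\bf v})$ ``by construction,'' and computes the Jacobian by the chain rule and a block pseudo-inverse, arriving at the explicit chart formula $\left(\partial{\bf q}_{\bf f}/\partial{\bf v}\right)\left(\partial{\bf q}_{\bf f}/\partial{\bf v}\right)^{+}$ for ${\cal P}_{\bf f}$. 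Comparing the two: your IFT step actually proves local uniqueness of the minimizer, which is precisely the fact the paper's ``by construction'' step silently relies on (that the nearest point of ${\bf C}({\bf v},{\bf g})$ in $\overline{\cal H}_{\bf r}$ is the foot point ${\bf q}_{\bf f}({\bf v})$), so your explicit treatment of the localisation issue is a strength rather than a gap; moreover the obstacle you flag is easily closed --- any minimizer for ${\bf g}$ near ${\bf f}$ lies within $2\|{\bf g}-{\bf f}\|_2$ of ${\bf f}$ by the triangle inequality, and the lower-rank boundary $\overline{\cal H}_{\bf r}\setminus{\cal H}_{\bf r}$ is a closed set not containing ${\bf f}$, so a sufficiently small ball around ${\bf f}$ meets $\overline{\cal H}_{\bf r}$ only in the smooth stratum. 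What the paper's route buys is an explicit linear-algebraic expression for the projection in a chart (used downstream, cf.\ Appendix \ref{sec:appendix-projected2D}) and the HT-specific rank-stability result of Lemma \ref{add-open-lemma}; what your route buys is a self-contained, coordinate-free argument in the spirit of the cited proofs \cite{lewis2008alternating,absil2012projection}, which makes the positive-reach/uniqueness question explicit rather than implicit.
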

This result has been proven in 
\cite{lewis2008alternating} and 
\cite{absil2012projection} for 
finite-dimensional manifolds. 
A slightly different proof which holds 
for finite-dimensional manifolds without
boundary is given in 
\cite{marz2012calculus}. In Appendix 
\ref{sec:appendix-bst-analysis} we provide 
an alternative proof which is based primarily on linear 
algebra rather than differential geometry.
With Remark \ref{remark:all-tensors-have-rank} in mind, we can apply Lemma 
\ref{trunc-smooth-thm} to every tensor except the zero 
tensor.
We now use Lemma \ref{trunc-smooth-thm} 
to prove consistency between the fixed-rank B-ST method \eqref{one-step-trunc-best} 
and the B-TSP method  
\eqref{discrete-proj-dynam}.

\begin{proposition}[Consistency of B-ST and B-TSP]
\label{prop:consistency_with_B-TSP}
Let ${\bf \Phi}({\bf N}, {\bf f}, \Delta t)$
denote an order-$p$ increment function defining a 
one-step temporal integration scheme as in \eqref{discrete-time-ode}
and let ${\bf f}_{k}\in{\cal H}_{\bf r}$. We have that 
\begin{equation}
\label{best-trun-taylor}
 {\mathfrak T}_{\bf r}^{\text{best}}({\bf f}_k + 
 \Delta t{\bf \Phi}({\bf N}, {\bf f}_k, \Delta t))
 = {\bf f}_k +
\Delta t
{\cal P}_{{\bf f}_k}
{\bf \Phi}({\bf N}, {\bf f}_k, \Delta t) 
+\cdots ,
\end{equation}
i.e., B-ST is at least order $1$ consistent with B-TSP 
in $\Delta t$.
\end{proposition}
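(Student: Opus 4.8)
The plan is to read the identity \eqref{best-trun-taylor} as a Taylor expansion in the single scalar variable $\Delta t$, exploiting the perturbation series \eqref{perturbationseries1} that has already been set up. Concretely, I would set $\varepsilon = \Delta t$ and take the perturbation direction to be ${\bf v} = {\bf \Phi}({\bf N}, {\bf f}_k, \Delta t)$, so that the argument of the truncation operator on the left-hand side of \eqref{best-trun-taylor} becomes exactly ${\bf f}_k + \Delta t\, {\bf \Phi}$. The zeroth-order term is handled immediately by the hypothesis ${\bf f}_k \in {\cal H}_{\bf r}$, which forces ${\mathfrak T}_{\bf r}^{\text{best}}({\bf f}_k) = {\bf f}_k$, exactly as in the passage from \eqref{perturbationseries} to \eqref{perturbationseries1}.

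The first-order coefficient is supplied by Lemma \ref{trunc-smooth-thm}: since ${\bf f}_k \in {\cal H}_{\bf r}$, the Jacobian $\partial {\mathfrak T}_{\bf r}^{\text{best}}({\bf f}_k)/\partial {\bf f}$ coincides with the orthogonal tangent-space projection ${\cal P}_{{\bf f}_k}$. Substituting this into \eqref{perturbationseries1} yields the linear term $\Delta t\,{\cal P}_{{\bf f}_k}{\bf \Phi}({\bf N}, {\bf f}_k, \Delta t)$. Since any consistent increment function satisfies ${\bf \Phi}({\bf N}, {\bf f}_k, 0) = {\bf N}({\bf f}_k)$, this leading increment reduces to $\Delta t\,{\cal P}_{{\bf f}_k}{\bf N}({\bf f}_k)$ to lowest order, which is precisely the leading increment of the B-TSP update \eqref{discrete-proj-dynam} built from the projected field ${\cal P}_{{\bf w}}{\bf N}$. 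Matching the two expansions to first order in $\Delta t$ then establishes the claimed order-$1$ consistency.

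Two technical points deserve care, and the second is the only real obstacle. First, the perturbation direction ${\bf v} = {\bf \Phi}({\bf N}, {\bf f}_k, \Delta t)$ is not fixed but itself depends on $\Delta t$; the cleanest bookkeeping is to set $F(\Delta t) = {\mathfrak T}_{\bf r}^{\text{best}}({\bf g}(\Delta t))$ with ${\bf g}(\Delta t) = {\bf f}_k + \Delta t\, {\bf \Phi}({\bf N}, {\bf f}_k, \Delta t)$, differentiate by the chain rule, and note that ${\bf g}(0) = {\bf f}_k$ and $F'(0) = {\cal P}_{{\bf f}_k}{\bf \Phi}({\bf N}, {\bf f}_k, 0)$, so that replacing ${\bf \Phi}({\bf N}, {\bf f}_k, 0)$ by ${\bf \Phi}({\bf N}, {\bf f}_k, \Delta t)$ changes the result only at $O(\Delta t^2)$ and is absorbed into the remainder. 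Second, because ${\bf g}(\Delta t)$ leaves the manifold for $\Delta t > 0$, I would invoke the fact that Lemma \ref{trunc-smooth-thm} furnishes $C^1$-smoothness of ${\mathfrak T}_{\bf r}^{\text{best}}$ in a full ambient neighborhood of ${\bf f}_k$ (a tubular neighborhood of ${\cal H}_{\bf r}$), so that for $\Delta t$ small enough ${\bf g}(\Delta t)$ stays in the region where the Jacobian is defined and the expansion is legitimate. This regularity/neighborhood verification, rather than any computation, is the delicate point; once the argument is confined to the tubular neighborhood and the Jacobian is identified with ${\cal P}_{{\bf f}_k}$, the algebra of the expansion is immediate.
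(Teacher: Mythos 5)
Your proposal is correct and follows essentially the same route as the paper, which proves the proposition in one line by combining the perturbation series \eqref{perturbationseries1} with the Jacobian identity of Lemma \ref{trunc-smooth-thm}. Your two technical refinements---handling the $\Delta t$-dependence of ${\bf \Phi}$ via the chain rule and confining the expansion to an ambient neighborhood where ${\mathfrak T}_{\bf r}^{\text{best}}$ is smooth (which the appendix proof of Lemma \ref{trunc-smooth-thm} does furnish through its local coordinates ${\bf C}({\bf v},{\bf g})$ with a normal component)---simply make explicit details the paper leaves implicit.
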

This proposition follows immediately from 
using the perturbation series \eqref{perturbationseries1} 
together with Lemma \ref{trunc-smooth-thm}.
Next, we provide a condition 
for rank selection in the rank-adaptive methods \eqref{one-step-trunc-best_adapt_rank}-\eqref{one-step-trunc-svd_adapt_rank} 
which guarantees a consistent approximation to equation \eqref{mol-ode}.

\begin{proposition}[Rank selection for B-ST consistency]
\label{prop:bst-adaptive-rank-consistent}
Let ${\bf \Phi}({\bf N}, {\bf f}_k, \Delta t)$
be an order-$p$ increment function. The step-truncation method
\begin{equation}
\nonumber
{\bf a}_k ={\bf f}_k + \Delta t {\bf \Phi}({\bf N}, {\bf f}_k, \Delta t),\qquad {\bf f}_{k+1} =
{\mathfrak{T}}_{{\bf r}_k}^{\text{best}}({\bf a}_k),
\end{equation}
approximates \eqref{mol-ode} with order-$p$ 
local truncation error if and only if 
there exists an $M>0$ (independent of $k$) 
such that the rank ${\bf r}_{k}$ at time index $k$
satisfies the inequality
\begin{equation}
\label{truncation-accuracy-with-order}
\left | \left |
{\bf a}_k - {\mathfrak{T}}_{{\bf r}_k}^{\text{best}}({\bf a}_k) 
\right | \right |_2
\leq M\Delta t^{p+1}.
\end{equation} 
\end{proposition}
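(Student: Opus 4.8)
The plan is to reduce the stated equivalence to a single application of the triangle inequality, after decomposing the local truncation error into the one-step integrator's contribution and the rank-truncation contribution. By definition, the local truncation error of the scheme at step $k+1$ is measured by seeding the update with the exact solution of \eqref{mol-ode}, so I set ${\bf f}_k = {\bf f}(t_k)$ and write ${\bf a}_k = {\bf f}(t_k) + \Delta t\, {\bf \Phi}({\bf N}, {\bf f}(t_k), \Delta t)$ for the pre-truncation stage. The local truncation error is then $\bm{\tau}_{k+1} = {\bf f}(t_{k+1}) - {\mathfrak T}_{{\bf r}_k}^{\text{best}}({\bf a}_k)$, and the identity I would exploit is the additive splitting
\[
\bm{\tau}_{k+1} = \left[{\bf f}(t_{k+1}) - {\bf a}_k\right] + \left[{\bf a}_k - {\mathfrak T}_{{\bf r}_k}^{\text{best}}({\bf a}_k)\right],
\]
which isolates the local error of the underlying one-step scheme \eqref{discrete-time-ode} in the first bracket and the best-truncation error (precisely the quantity appearing in \eqref{truncation-accuracy-with-order}) in the second.

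First I would use the hypothesis that ${\bf \Phi}$ is an order-$p$ increment function: by the standard consistency theory for one-step methods applied to \eqref{mol-ode}, there is a constant $C>0$, independent of $k$ on the finite grid, with $\|{\bf f}(t_{k+1}) - {\bf a}_k\|_2 \leq C\,\Delta t^{p+1}$. For the forward (\emph{if}) implication I would assume \eqref{truncation-accuracy-with-order} and apply the triangle inequality to the splitting, obtaining $\|\bm{\tau}_{k+1}\|_2 \leq (C+M)\,\Delta t^{p+1}$, which is order-$p$ local truncation error. For the converse (\emph{only if}) implication I would assume the step-truncation scheme has order-$p$ local truncation error, i.e. $\|\bm{\tau}_{k+1}\|_2 \leq \tilde C\,\Delta t^{p+1}$ uniformly in $k$, rearrange the identity as ${\bf a}_k - {\mathfrak T}_{{\bf r}_k}^{\text{best}}({\bf a}_k) = \bm{\tau}_{k+1} - [{\bf f}(t_{k+1}) - {\bf a}_k]$, and bound again by the triangle inequality to get $\|{\bf a}_k - {\mathfrak T}_{{\bf r}_k}^{\text{best}}({\bf a}_k)\|_2 \leq (\tilde C + C)\,\Delta t^{p+1}$, so that \eqref{truncation-accuracy-with-order} holds with $M = \tilde C + C$. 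Both directions therefore hinge on nothing more than the same decomposition read in opposite senses, which is why the condition is an exact equivalence rather than merely sufficient.

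The step I expect to require the most care — rather than a genuine obstacle — is guaranteeing that the constant $M$ (equivalently $C$) is truly independent of the time index $k$, since this uniformity is what upgrades a pointwise $O(\Delta t^{p+1})$ estimate into the single bound asserted in the proposition. I would secure it from the smoothness of the exact trajectory ${\bf f}(t)$ on the compact interval $[0,T]$ together with local boundedness of ${\bf N}$ and ${\bf \Phi}$ along that trajectory, which renders the order-$p$ local error of \eqref{discrete-time-ode} uniformly $O(\Delta t^{p+1})$. Notably, no tangent-space or curvature estimates enter here, since the truncation defect is controlled directly in the Euclidean norm $\|\cdot\|_2$ rather than through its linearization ${\cal P}_{{\bf f}}$; Lemma \ref{trunc-smooth-thm} is not needed for this consistency statement and becomes relevant only when passing to the geometric interpretation.
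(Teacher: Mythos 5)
Your proof is correct and takes essentially the same route as the paper's: both directions reduce to the triangle inequality applied to the three points ${\bf f}(t_{k+1})$, ${\bf a}_k$, and ${\mathfrak{T}}_{{\bf r}_k}^{\text{best}}({\bf a}_k)$, together with the order-$p$ bound $\left\|{\bf f}(t_{k+1})-{\bf a}_k\right\|_2\leq C\,\Delta t^{p+1}$ for the increment function. Your closing remark on securing the $k$-uniformity of the constants merely makes explicit what the paper leaves implicit, and your observation that Lemma \ref{trunc-smooth-thm} is not needed here is likewise consistent with the paper's proof.
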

\begin{proof}
Denote by ${\bf f}(t_{k+1})$ the exact solution to 
${{d}{\bf f}}/{{ d} t} ={\bf N}({\bf f})$
with initial condition ${\bf f}_k$ at time $t_k$. 
For the forward implication, suppose there exists a constant 
$C_1$ such that 
$\left | \left |
{\bf f}(t_{k+1}) - {\bf f}_{k+1}
\right | \right |_2\leq C_1 \Delta t^{p+1}$. Then, 
\begin{align*}
\left | \left |
{\bf a}_k - {\mathfrak{T}}_{{\bf r}_k}^{\text{best}}({\bf a}_k) 
\right | \right |_2
&\leq \left | \left |
{\bf a}_k - {\bf f}(t_{k+1})
\right | \right |_2
+
\left | \left |
 {\bf f}(t_{k+1}) - {\mathfrak{T}}_{{\bf r}_k}^{\text{best}}({\bf a}_k) 
\right | \right |_2\\
&\leq C_2 \Delta t^{p+1} + \left | \left |
{\bf f}(t_{k+1}) - {\bf f}_{k+1}
\right | \right |_2 \\
&\leq C_2 \Delta t^{p+1} +  C_1 \Delta t^{p+1},
\end{align*}	
where $C_2$ is a constant.
To prove the converse, we estimate
the local truncation error as
\begin{align*}
	\left | \left |
		{\bf f}(t_{k+1}) - {\bf f}_{k+1}	
	\right | \right |_2
	&\leq
	\left | \left |
		{\bf f}(t_{k+1}) - {\bf a}_{k}	
	\right | \right |_2
	+
	\left |	\left |
		{\bf a}_k - {\bf f}_{k+1}	
	\right | \right |_2	\\
	&\leq
	C_2 \Delta t^{p+1}	+
	\left | \left |
		{\bf a}_k - {\mathfrak{T}}_{{\bf r}_k}^{\text{best}}({\bf a}_k) 
	\right | \right |_2\\
	&\leq C_2\Delta t^{p+1} + M\Delta t^{p+1},
\end{align*}
where $C_2$ is a constant. 
\hfill\(\qed\)
\end{proof}
Recalling Remark \ref{remark:all-tensors-have-rank}, for any 
given tensor ${\bf a}_k \in \mathbb{R}^{n_1 \times \cdots \times n_d}$
there exists a rank ${\bf r}_k$ which makes the 
left hand side of the inequality 
\eqref{truncation-accuracy-with-order} equal to zero. 
Thus, there always exists a rank ${\bf r}_k$ which 
satisfies \eqref{truncation-accuracy-with-order}.
Using consistency of the rank-adaptive B-ST scheme 
\eqref{one-step-trunc-best_adapt_rank} proven in Proposition \ref{prop:bst-adaptive-rank-consistent}, 
we can easily obtain consistency results 
for step-truncation methods based 
on quasi-optimal truncation operators such as 
${\mathfrak T}_{\bf r}^{\text{SVD}}$. 
To do so, we first show that the local truncation error of SVD-ST 
scheme \eqref{one-step-trunc-svd} 
is dominated by the local truncation 
error of the B-ST scheme \eqref{one-step-trunc-best}.
%

\begin{lemma}[Error bound on the SVD 
step-truncation scheme]
\label{svd-local-error-statement}
Let ${\bf f}(t_{k+1})$ denote the exact solution to 
${{\text d}{\bf f}}/{{\text d} t} ={\bf N}({\bf f})$
with initial condition ${\bf f}_k$ at time $t_k$, 
and let ${\bf \Phi}({\bf N}, {\bf f}, \Delta t)$ be
an order-$p$ increment function.
The local truncation error of the SVD-ST integrator 
\eqref{one-step-trunc-svd} satisfies 
\begin{align}
&\left \|
	{\bf f}(t_{k+1})-
	{\mathfrak T}_{\bf r}^{\text{SVD}}\left ( {\bf f}(t_k)+
	\Delta t {\bf \Phi}({\bf N}, {\bf f}(t_k), \Delta t) \right )
\right \|_2 \leq
\nonumber \\ 
&\hspace{17mm}
K\left (1+\sqrt{2d - 3}\right )\Delta t^{p+1} + \sqrt{2d - 3}
\left\|{\bf f}(t_{k+1})- 
{\mathfrak T}_{\bf r}^{\text{best}}\left({\bf f}(t_k)+
\Delta t {\bf \Phi}({\bf N}, {\bf f}(t_k), \Delta t) \right)\right\|_2.
\nonumber
\end{align}
\end{lemma}
\begin{proof}
First, we apply triangle inequality 
\begin{align}
 \nonumber
 &\left\|{\bf f}(t_{k+1})-
 {\mathfrak T}_{\bf r}^{\text{SVD}}({\bf f}(t_k)+
\Delta t {\bf \Phi}({\bf N}, {\bf f}(t_k), \Delta t)) \right\|_2
 \leq\\ \nonumber
 \qquad 
 &\hspace{15mm}
 \left\|{\bf f}(t_{k+1})-
 \left( {\bf f}(t_k)+
\Delta t {\bf \Phi}({\bf N}, {\bf f}(t_k), \Delta t)\right)\right\|_2+\\ 
 &\hspace{34mm} 
 \left\|{\bf f}(t_k)+
\Delta t {\bf \Phi}({\bf N}, {\bf f}(t_k), \Delta t)-
 {\mathfrak T}_{\bf r}^{\text{SVD}}\left({\bf f}(t_k)+
\Delta t {\bf \Phi}({\bf N}, {\bf f}(t_k), \Delta t)\right)\right\|_2.
\label{g1}
\end{align}
Since the increment function ${\bf \Phi}({\bf N}, {\bf f}(\tau), \Delta t)$ is of order $p$, we can replace the 
first term at the right hand side of \eqref{g1} 
by $K\Delta t^{p+1}$, i.e., 
\begin{align}
\nonumber
&\left\|
 {\bf f}(t_{k+1})-
 {\mathfrak T}_{\bf r}^{\text{SVD}}({\bf f}(t_k)+
 \Delta t {\bf \Phi}({\bf N}, {\bf f}(t_k), \Delta t))
\right\|_2
 \leq\\ \nonumber
&\hspace{20mm}K\Delta t^{p+1} +
 \left\|{\bf f}(t_k)+
\Delta t {\bf \Phi}({\bf N}, {\bf f}(t_k), \Delta t)-
 {\mathfrak T}_{\bf r}^{\text{SVD}}\left({\bf f}(t_k)+
\Delta t {\bf \Phi}({\bf N}, {\bf f}(t_k), \Delta t)\right)\right\|_2,\nonumber
\end{align}
where $K$ is a constant.
Next, we use the inequality \eqref{svd-trunc-close} 
to obtain 
\begin{align}
&\left\|{\bf f}(t_{k+1})-
 {\mathfrak T}_{\bf r}^{\text{SVD}}({\bf f}(t_k)+
\Delta t {\bf \Phi}({\bf N}, {\bf f}(t_k), \Delta t)) \right\|_2
 \leq \nonumber \\
 &\hspace{20mm}K\Delta t^{p+1}
 +
 \left(\sqrt{2d-3}\right )\left\|{\bf f}(t_k)+
\Delta t {\bf \Phi}({\bf N}, {\bf f}(t_k), \Delta t)-
 {\mathfrak T}_{\bf r}^{\text{best}}\left({\bf f}(t_k)+
\Delta t {\bf \Phi}({\bf N}, {\bf f}(t_k), \Delta t)\right)\right\|_2.
\nonumber
\end{align}
Another application of triangle inequality yields 
\begin{align}
\nonumber
 &\left\|{\bf f}(t_{k+1})-
 {\mathfrak T}_{\bf r}^{\text{SVD}}({\bf f}(t_k)+
\Delta t {\bf \Phi}({\bf N}, {\bf f}(t_k), \Delta t)) \right\|_2
 \leq\\ \nonumber
 &\hspace{20mm}
 K\Delta t^{p+1} +
 \left(\sqrt{2d-3}\right )\bigg(
\left\|{\bf f}(t_k)+
\Delta t {\bf \Phi}({\bf N}, {\bf f}(t_k), \Delta t)- 
{\bf f}(t_{k+1})\right\|_2+\nonumber\\
 &\hspace{54mm}
 \left\|{\bf f}(t_{k+1})-
 {\mathfrak T}_{\bf r}^{\text{best}}\left({\bf f}(t_k)+
\Delta t {\bf \Phi}({\bf N}, {\bf f}(t_k), \Delta t)\right)\right\|_2 
\bigg).
\nonumber
\end{align}
Finally, collecting like terms yields the desired result.
\hfill\(\qed\)
\end{proof}
By combining Proposition 
\ref{prop:bst-adaptive-rank-consistent} and
Lemma \ref{svd-local-error-statement}, it is 
straightforward to prove the following 
consistency result for the rank-adaptive SVD-ST 
integrator \eqref{one-step-trunc-best_adapt_rank}.

\begin{corollary}[Rank selection for SVD-ST consistency]
\label{svdst-adaptive-rank-consistent}
Let ${\bf \Phi}({\bf N}, {\bf f}_k, \Delta t)$
be an order-$p$ increment function. The step-truncation 
method  
\begin{equation}
\nonumber
{\bf a}_k ={\bf f}_k + \Delta t {\bf \Phi}({\bf N}, {\bf f}_k, \Delta t),\qquad {\bf f}_{k+1} =
{\mathfrak{T}}_{{\bf r}_k}^{\text{SVD}}({\bf a}_k)
\end{equation}
approximates 
\eqref{mol-ode} with order-$p$ local truncation error
if and only if 
there exists an $M>0$ 
such that the rank ${\bf r}_{k}$ at time index $k$
satisfies the inequality
\begin{equation}
\label{truncation-accuracy-with-order_2}
\left | \left |
{\bf a}_k - {\mathfrak{T}}_{{\bf r}_k}^{\text{best}}({\bf a}_k) 
\right | \right |_2
\leq M\Delta t^{p+1}.
\end{equation} 
\end{corollary}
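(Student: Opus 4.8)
The plan is to prove both implications by combining the quasi-optimality inequality \eqref{svd-trunc-close}, which bounds the SVD truncation error above and below by the best truncation error, with Proposition \ref{prop:bst-adaptive-rank-consistent} and Lemma \ref{svd-local-error-statement}. The key structural observation is that the rank-selection criterion \eqref{truncation-accuracy-with-order_2} is phrased in terms of the \emph{best} truncation operator $\mathfrak{T}_{{\bf r}_k}^{\text{best}}$, even though the scheme under analysis truncates with $\mathfrak{T}_{{\bf r}_k}^{\text{SVD}}$. Since this is exactly the criterion appearing in Proposition \ref{prop:bst-adaptive-rank-consistent}, I can transfer the B-ST consistency result directly to the present setting. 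Following the local-error convention, I identify ${\bf f}(t_k)$ with ${\bf f}_k$, so that ${\bf a}_k = {\bf f}(t_k) + \Delta t\,{\bf \Phi}({\bf N}, {\bf f}(t_k), \Delta t)$ coincides with the argument of the truncation operators appearing in Lemma \ref{svd-local-error-statement}.

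For sufficiency, I would assume \eqref{truncation-accuracy-with-order_2} and invoke Proposition \ref{prop:bst-adaptive-rank-consistent} to conclude that the B-ST scheme is order-$p$ consistent, i.e. there is a constant $C_1$ with $\|{\bf f}(t_{k+1}) - \mathfrak{T}_{{\bf r}_k}^{\text{best}}({\bf a}_k)\|_2 \leq C_1 \Delta t^{p+1}$. Substituting this bound into the right-hand side of \eqref{svd-local-error} from Lemma \ref{svd-local-error-statement} then gives
\[
\left\|{\bf f}(t_{k+1}) - \mathfrak{T}_{{\bf r}_k}^{\text{SVD}}({\bf a}_k)\right\|_2 \leq \left[K\left(1 + \sqrt{2d-3}\right) + \sqrt{2d-3}\,C_1\right]\Delta t^{p+1},
\]
which is precisely an order-$p$ local truncation error for the SVD-ST scheme.

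For necessity, I would run the estimate in reverse using only the lower half of \eqref{svd-trunc-close} (best error $\leq$ SVD error) and the triangle inequality. Assuming order-$p$ consistency of SVD-ST, so $\|{\bf f}(t_{k+1}) - \mathfrak{T}_{{\bf r}_k}^{\text{SVD}}({\bf a}_k)\|_2 \leq C\,\Delta t^{p+1}$, I would bound $\|{\bf a}_k - \mathfrak{T}_{{\bf r}_k}^{\text{best}}({\bf a}_k)\|_2 \leq \|{\bf a}_k - \mathfrak{T}_{{\bf r}_k}^{\text{SVD}}({\bf a}_k)\|_2$ and then split the right-hand side as $\|{\bf a}_k - {\bf f}(t_{k+1})\|_2 + \|{\bf f}(t_{k+1}) - \mathfrak{T}_{{\bf r}_k}^{\text{SVD}}({\bf a}_k)\|_2$. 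The first term is $O(\Delta t^{p+1})$ because ${\bf \Phi}$ is an order-$p$ increment function (it is the local error of the underlying one-step method), and the second is $\leq C\,\Delta t^{p+1}$ by hypothesis; summing gives the required constant $M$.

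I do not anticipate a genuine obstacle, since the statement is a repackaging of results already in hand. The only point requiring care is to use the two-sided inequality \eqref{svd-trunc-close} in the correct direction in each implication: the upper bound (with the $\sqrt{2d-3}$ factor) is what drives the sufficiency argument through Lemma \ref{svd-local-error-statement}, whereas the lower bound is what makes the necessity argument close. Recognizing that the criterion \eqref{truncation-accuracy-with-order_2} is stated with $\mathfrak{T}^{\text{best}}$ rather than $\mathfrak{T}^{\text{SVD}}$ is the conceptual step that keeps both directions clean.
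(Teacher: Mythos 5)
Your proposal is correct and follows exactly the route the paper indicates: the paper leaves the proof as a straightforward combination of Proposition \ref{prop:bst-adaptive-rank-consistent} and Lemma \ref{svd-local-error-statement}, which is precisely your sufficiency argument, while your necessity direction (optimality of $\mathfrak{T}^{\text{best}}_{{\bf r}_k}$ from the first inequality in \eqref{svd-trunc-close}, then the triangle inequality with the order-$p$ increment bound) mirrors the forward implication in the proof of Proposition \ref{prop:bst-adaptive-rank-consistent} and the paper's remark surrounding \eqref{truncation-accuracy-with-order_3}. Your identification ${\bf f}_k = {\bf f}(t_k)$ under the local-error convention and your care about which side of \eqref{svd-trunc-close} drives each implication are both consistent with the paper's conventions, so there is nothing to correct.
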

Note that by inequality \eqref{svd-trunc-close}, the statement 
in \eqref{truncation-accuracy-with-order_2} is equivalent to 
\begin{equation}
\label{truncation-accuracy-with-order_3}
\left |\left |
{\bf a}_k - {\mathfrak{T}}_{{\bf r}_k}^{\text{SVD}}({\bf a}_k)
\right |\right |_2\leq M' \Delta t^{p+1},
\end{equation}
for another constant $M' > 0$, which depends on $d$.
Consistency results analogous to Corollary 
\ref{svdst-adaptive-rank-consistent} for step-truncation 
integrators based on any quasi-optimal truncation 
can be obtained in a similar way.
{
\subsection{Error constants}
\label{subsec:const-selection}
In this section we 
provide a lower bound for the constant $M$ 
appearing in Corollary \ref{svdst-adaptive-rank-consistent}.
To simplify the presentation we develop the bounds for the matrix 
case ($d=2$) and note that similar results for $d>2$ 
can be obtained by using the hierarchical approximability 
theorem discussed in \cite{grasedyck2010hierarchical}.

With reference to Corollary \ref{svdst-adaptive-rank-consistent}, 
let $\{\sigma_i\}$ be the set of
singular values of ${\bf a}_k$ 
and let ${\bf a}_k -
{\mathfrak T}_{r_k}({\bf a}_k)={\bf E}_k
\in {\mathbb{R}}^{n_1\times n_2}$ be the error
matrix due to tensor truncation. 
Then the local consistency 
condition \eqref{truncation-accuracy-with-order_2} 
can be written as
\begin{equation}
\begin{aligned}
\label{25}
\left \|
{\bf a}_k -
{\mathfrak T}_{r_k}({\bf a}_k)
\right \|_2^2
&=
\left \|
{\bf E}_k
\right \|_2^2 \\
&=
\sum_{i={r_k}+1}^{\text{min}(n_1,n_2)}
\sigma_i^2
\leq M^2\Delta t^{2p+2}.
\end{aligned}
\end{equation}
Equation \eqref{25} can be used to obtain the following lower bound for the coefficient $M$
\begin{equation}
\label{eqn:local-error-coef-bound}
M\geq\frac{1}{\Delta t^{p+1}}\sqrt{{\displaystyle
\sum_{i={r_k}+1}^{\text{min}(n_1,n_2)}
\sigma_i^2
}}.
\end{equation}
The lower bound can be explicitly computed if we 
have available the decay rate of the singular values $\{ \sigma_i \}$. 
For instance, if  the singular values decay 
exponentially fast (as in the case of
singular values considered in
\cite{opmeer2015decay}), i.e.,
$\sigma^2_i \leq Cq^i$ for some
$C>0$ and $q\in(0,1)$, 
Then by the geometric series formula we have that
\begin{equation*}
\left\|{\bf a}_k\right\|_2^2\leq C\sum_{i=1}^\infty q^i =C\frac{q}{(1-q)},
\end{equation*} 
which yields $Cq \geq (1-q)\|{\bf a}_k\|_2^2$.
In this case we may bound the local error as 
\begin{equation}
\nonumber
\left\|{\bf E}_k\right\|_2^2
\leq
C\sum_{i=r_k+1}^\infty
q^i
=
C\sum_{i=1}^\infty
q^i
-
C\sum_{i=1}^{r_k}
q^i
=
C\frac{q - (q - q^{r_k+1})}{1-q}
=
C\frac{q^{r_k+1}}{1-q}.
\end{equation}
Inserting this bound into
\eqref{eqn:local-error-coef-bound}
and recalling that $Cq \geq (1-q)\left\|{\bf a}_k\right\|_2^2$
and $\left\|{\bf a}_k\right\|_2 \geq \left\|{\bf E}_k\right\|_2$
we obtain
\begin{equation}
\label{eqn:rank-constant-bounding}
M\geq 
\frac{1}{\Delta t^{p+1}}\sqrt{ \frac{Cq^{r_k+1}}{(1-q)} }
\geq
\frac{1}{\Delta t^{p+1}}
\sqrt{\frac{(1-q)\left\|{\bf a}_k\right\|_2^2q^{r_k}}
{(1-q)}}
=\frac{\left\|{\bf a}_k\right\|_2\sqrt{q^{r_k}}}{\Delta t^{p+1}}. 
\end{equation}
Equation \eqref{eqn:rank-constant-bounding} establishes a relationship
between the local error coefficient $M$, the solution rank $r_k$ at time step $k$, the time step $\Delta t$, and the 2-norm of the 
solution ${\bf a}_k$ at time step $k$. 

A similar relation can be derived for singular values 
$\{\sigma_i\}$ decaying algebraically, i.e., 
$\sigma_i^2 \leq Ci^{-1-2s}$, where $s\in\mathbb{N}$. 
It was shown in
\cite{griebel2018decay} that this decay rate
occurs when discretizing an $s$-times 
differentiable bivariate function.
Moreover, it was also shown that 
\begin{equation*}
\left\|{\bf E}\|_2 \leq K\|{\bf a}_k\right\|_2(r_k +1)^{-s},
\end{equation*}
where $K$ is a constant related to the
measure of the domain of the aforementioned 
$s$-times differentiable bivariate function.
Therefore, if we choose the rank $r_k$ to satisfy the inequality
\begin{equation}
\nonumber
\frac{M}{K}\geq \frac{\left\|{\bf a}_k\right\|_2}{(r_k +1)^{s}}
\end{equation}
then we have that condition \eqref{truncation-accuracy-with-order_2} is also satisfied. An expression for $K$ may be found in Theorem 3.3 of
\cite{griebel2018decay}.
}
\section{Convergence of rank-adaptive step-truncation schemes}
\label{sec:global-error}
We have shown that the proposed methods are consistent,
now we prove convergence. To do so,
let us assume that the increment function 
${\bf \Phi}({\bf N}, {\bf f}, \Delta t)$ satisfies the 
following stability condition: There exist 
constants $C,D,E \geq 0$ and a positive integer 
$m \leq p$ so that as $\Delta t\rightarrow 0$, the inequality 
\begin{equation}
\label{stability_of_increment_function}
\left \|
{\bf \Phi}({\bf N}, \hat{\bf f}, \Delta t) - 
{\bf \Phi}({\bf N}, {\tilde{\bf f}}, \Delta t)
\right \|_2 \leq 
(C+D\Delta t) 
\left \|
\hat{\bf f} - {\tilde{\bf f}}
\right \|_2
+E\Delta t^{m}
\end{equation}
holds for all
$\hat{\bf f},\tilde{\bf f} \in \mathbb{R}^{n_1 \times \cdots \times n_d}$.
This assumption is crucial in our development of global error 
analysis for rank-adaptive step-truncation methods.
\begin{theorem}[Global error for rank-adaptive schemes]
\label{thm:st-global-error}
Let ${\bf f}(t)$ be the exact solution
to \eqref{mol-ode}, assume $\bf N$ is 
Lipschitz continuous with constant $L$, and  
let ${\bf \Phi}({\bf N}, {\bf f}, \Delta t)$ be
an order-$p$ increment function satisfying the stability 
criterion \eqref{stability_of_increment_function}.
If 
\begin{equation}
{\bf f}_{k+1} =
{\mathfrak{T}}_{{\bf r}_k}({\bf f}_k +
\Delta t{\bf \Phi}({\bf N}, {\bf f}_k, \Delta t))
\nonumber
\end{equation}
is an order-$p$ consistent step-truncation method, 
where ${\mathfrak{T}}_{{\bf r}_k} =
{\mathfrak{T}}_{{\bf r}_k}^{\text{best}}$ or
${\mathfrak{T}}_{{\bf r}_k} =
{\mathfrak{T}}_{{\bf r}_k}^{\text{SVD}}$
(see Proposition \ref{prop:bst-adaptive-rank-consistent}
or Corollary \ref{svdst-adaptive-rank-consistent}),
then the global error satisfies 
\begin{equation}
\left \|
{\bf f}(T) - {\bf f}_N 
\right \|_2
\leq
Q  \Delta t^{z},\nonumber
\end{equation}
where $z = \min(p,m)$.
The constant $Q$ depends on
the local error and stability
coefficients of the increment function
$\bf {\Phi}$, and the truncation constant $M$ in \eqref{truncation-accuracy-with-order} (or $M'$ in \eqref{truncation-accuracy-with-order_3}).
\end{theorem}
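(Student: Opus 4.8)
The plan is to run the classical ``Lady Windermere's fan'' argument for one-step methods, with the twist that the truncation operator is controlled not by a (nonexistent) global Lipschitz bound but directly by the rank-selection criterion \eqref{truncation-accuracy-with-order}. Write $e_k = \|{\bf f}(t_k) - {\bf f}_k\|_2$ for the global error, and introduce the two untruncated one-step iterates
\[
{\bf a}_k = {\bf f}_k + \Delta t\,{\bf \Phi}({\bf N},{\bf f}_k,\Delta t), \qquad {\bf b}_k = {\bf f}(t_k) + \Delta t\,{\bf \Phi}({\bf N},{\bf f}(t_k),\Delta t),
\]
so that ${\bf f}_{k+1} = {\mathfrak T}_{{\bf r}_k}({\bf a}_k)$. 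I would assume $e_0 = 0$ (or $e_0 = O(\Delta t^z)$ coming from the initial truncation of ${\bf f}_0$), which is then carried through harmlessly.

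First I would set up the one-step recursion by inserting ${\bf a}_k$ and ${\bf b}_k$ and applying the triangle inequality,
\[
e_{k+1} \le \underbrace{\|{\bf f}(t_{k+1}) - {\bf b}_k\|_2}_{\text{(I)}} + \underbrace{\|{\bf b}_k - {\bf a}_k\|_2}_{\text{(II)}} + \underbrace{\|{\bf a}_k - {\mathfrak T}_{{\bf r}_k}({\bf a}_k)\|_2}_{\text{(III)}}.
\]
Term (I) is the local truncation error of the base one-step scheme started from the exact solution; since $\bf \Phi$ is order $p$ and $\bf N$ is Lipschitz (so the exact solution is smooth enough), (I) $\le K\Delta t^{p+1}$. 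Term (III) is bounded by $M\Delta t^{p+1}$ directly by the rank-selection criterion \eqref{truncation-accuracy-with-order} for B-ST, and by $M'\Delta t^{p+1}$ via \eqref{truncation-accuracy-with-order_3} for SVD-ST. For term (II) I would invoke the stability condition \eqref{stability_of_increment_function} on the increment function to get
\[
\text{(II)} \le e_k + \Delta t\big[(C+D\Delta t)e_k + E\Delta t^{m}\big] = \big(1+(C+D\Delta t)\Delta t\big)e_k + E\Delta t^{m+1}.
\]

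Collecting the three terms and using $m \le p$ (so $\Delta t^{p+1}\le\Delta t^{m+1}$ for $\Delta t \le 1$) yields the affine recursion $e_{k+1} \le (1+a\Delta t)e_k + B\Delta t^{m+1}$ with $a = C+D\Delta t$ and $B = E + K + M$ (or $M'$). Finally I would apply the discrete Gr\"onwall inequality: iterating gives $e_N \le (1+a\Delta t)^N e_0 + B\Delta t^{m+1}\frac{(1+a\Delta t)^N - 1}{a\Delta t}$, and using $(1+a\Delta t)^N \le e^{aN\Delta t} = e^{aT}$ together with $e_0 = 0$ gives $e_N \le \frac{B(e^{aT}-1)}{a}\Delta t^m$. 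This is the claimed bound with $z = \min(p,m) = m$ and $Q = B(e^{aT}-1)/a$, which depends only on the local-error constant $K$, the stability constants $C,D,E$, the truncation constant $M$ (or $M'$), and the final time $T$; the degenerate case $a=0$ is handled by the limiting form $Q = BT$.

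The main obstacle --- and the one place where the adaptive construction is essential --- is term (III). For a fixed-rank method ${\mathfrak T}_{\bf r}$ is a metric projection onto the nonconvex set $\overline{\cal H}_{\bf r}$ and is not globally Lipschitz, so one cannot propagate errors through it; the classical remedy would require the low-rank approximability hypothesis \eqref{low-rank-approx}. The decomposition above is arranged precisely so that the truncation operator appears only in the form $\|{\bf a}_k - {\mathfrak T}_{{\bf r}_k}({\bf a}_k)\|_2$, i.e.\ as the truncation residual of its own argument, which the rank-selection rule forces to be $O(\Delta t^{p+1})$; all error propagation instead flows through the stability of the base increment function in term (II). Care is then needed to keep the constants $K,C,D,E,M$ uniform in $k$ --- this is where the global validity of \eqref{stability_of_increment_function} and the Lipschitz continuity of $\bf N$ enter --- and to note that the ranks ${\bf r}_k$ are chosen relative to ${\bf a}_k$ (the actual numerical iterate) rather than to ${\bf b}_k$, so one must avoid ever comparing ${\mathfrak T}_{{\bf r}_k}({\bf a}_k)$ with ${\mathfrak T}_{{\bf r}_k}({\bf b}_k)$; the chosen splitting does exactly that.
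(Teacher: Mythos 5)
Your proposal is correct and follows essentially the same route as the paper's proof: the identical three-term triangle-inequality decomposition (local error of the untruncated scheme, propagation of the previous error through the stability condition \eqref{stability_of_increment_function}, and the truncation residual $\left\|{\bf a}_k - {\mathfrak T}_{{\bf r}_k}({\bf a}_k)\right\|_2 \leq M\Delta t^{p+1}$ controlled by the rank-selection criterion), arranged precisely so the nonsmooth projection never acts on two different arguments. The only difference is cosmetic: the paper runs an induction with per-step constants $Z_k$ and leaves the uniform bound implicit, while you close the same recursion with a discrete Gr\"onwall inequality and thereby make $Q = B(e^{aT}-1)/a$ explicit, which is if anything a slightly more complete finish.
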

\begin{proof}
We induct on the number of time steps 
(i.e., $N$ in \eqref{time_discretization}), assuming
that $\Delta t$ is small enough for the local
error estimations to hold true.
The base case is given by one step error ($N=1$) which is 
local truncation error. Thus, from our consistency assumption 
we immediately obtain 
\begin{equation}
\left | \left |
{\bf f}(t_1) - {\mathfrak{T}}_{{\bf r}_0}({\bf f}_0 +
\Delta t{\bf \Phi}({\bf N}, {\bf f}_0, \Delta t))
\right | \right |_2
\leq C_0 \Delta t^{p+1},\nonumber
\end{equation}
which proves the base case. 
Now, assume that the error after $N-1$ steps satisfies 
\begin{equation}
\left | \left |
{\bf f}(t_{N-1}) - {\bf f}_{N-1}
\right | \right |_2
\leq 
Z_{N-1} \Delta t^{z},\nonumber
\end{equation}
where $z = \min(p,m)$.
Letting
${\bf a}_k = {\bf f}_k +
\Delta t{\bf \Phi}({\bf N}, {\bf f}_{k}, \Delta t)$
denote one step prior to truncation, we 
expand the final step error in terms of 
penultimate step
\begin{align}
\label{global-error-inductive-start}
\left | \left |
{\bf f}(T) - {\bf f}_{N}
\right | \right |_2
&=
\left | \left |
{\bf f}(t_{N}) - {\mathfrak{T}}_{{\bf r}_{N-1}}({\bf f}_{N-1} +
\Delta t {\bf \Phi}({\bf N}, {\bf f}_{N-1}, \Delta t))
\right | \right |_2\\ \nonumber
&\leq
\left | \left |
{\bf f}(t_{N}) - {\bf a}_{N-1}
\right | \right |_2+
\left | \left |
{\bf a}_{N-1} - {\mathfrak{T}}_{{\bf r}_{N-1}}({\bf f}_{N-1} +
\Delta t {\bf \Phi}({\bf N}, {\bf f}_{N-1}, \Delta t))
\right | \right |_2\\ \nonumber
&=
\left | \left |
{\bf f}(t_{N}) - {\bf a}_{N-1}
\right | \right |_2+
\left | \left |
{\bf a}_{N-1} - {\mathfrak{T}}_{{\bf r}_{N-1}}({\bf a}_{N-1})
\right | \right |_2\\ \nonumber
&\leq\left | \left |
{\bf f}(t_{N}) - {\bf a}_{N-1}
\right | \right |_2+ M \Delta t^{p+1}\\ \nonumber
&\leq
\left | \left |
{\bf f}(t_{N}) - \left (
{\bf f}(t_{N-1}) +
\Delta t {\bf \Phi}({\bf N},{\bf f}(t_{N-1}),\Delta t)
\right )
\right | \right |_2\\ \nonumber
&\ \ +
\left | \left |
{\bf f}(t_{N-1}) +
\Delta t {\bf \Phi}({\bf N},{\bf f}(t_{N-1}),\Delta t)
-{\bf a}_{N-1}
\right | \right |_2+ M \Delta t^{p+1}\\ \nonumber
&\leq 
\left | \left |
{\bf f}(t_{N-1}) +
\Delta t {\bf \Phi}({\bf N},{\bf f}(t_{N-1}),\Delta t )
-{\bf a}_{N-1}
\right | \right |_2+ K_{N-1}\Delta t^{p+1}+ M \Delta t^{p+1},
\end{align}
where $K_{N-1}$ is a local error constant
for the untruncated scheme \eqref{discrete-time-ode}.
Expanding ${\bf a}_{N-1}$ and using the triangle inequality 
we find 
\begin{align}
\label{global-err-recursion}
\nonumber
\left | \left |
{\bf f}(T) - {\bf f}_{N}
\right | \right |_2
&\leq 
\left | \left |
{\bf f}(t_{N-1}) -
{\bf f}_{N-1}
\right | \right |_2 \\ \nonumber
&\qquad+\Delta t \left | \left |
{\bf \Phi}({\bf N},{\bf f}(t_{N-1}),\Delta t)
-{\bf \Phi}({\bf N},{\bf f}_{N-1},\Delta t)
\right | \right |_2\\ 
&\qquad \qquad + K_{N-1}\Delta t^{p+1}+ M \Delta t^{p+1}.
\end{align}
Using our assumption that the increment function is stable, 
\eqref{stability_of_increment_function} yields
\begin{align}
 |  |
{\bf f}(T) &- {\bf f}_{N}
 |  |_2
\leq 
\left | \left |
{\bf f}(t_{N-1}) -
{\bf f}_{N-1}
\right | \right |_2 +
(C+D\Delta t )\Delta t 
\left | \left |
{\bf f}(t_{N-1}) -
{\bf f}_{N-1}
\right | \right |_2 \nonumber \\ \nonumber
&\qquad\qquad\qquad +E\Delta t^{m+1}
+ K_{N-1}\Delta t^{p+1}+ M \Delta t^{p+1}\\
\nonumber
&=
(1+C\Delta t + D \Delta t^2)
\left | \left |
{\bf f}(t_{N-1}) -
{\bf f}_{N-1}
\right | \right |_2
+E\Delta t^{m+1}
+ K_{N-1}\Delta t^{p+1}
+ M \Delta t^{p+1}\\\nonumber
&\leq
(1+C\Delta t +D\Delta t^2)
Z_{N-1} \Delta t^{z}
+E\Delta t^{m+1} 
+ K_{N-1}\Delta t^{p+1} 
+ M \Delta t^{p+1}.
\end{align}
Finally, recalling that $z=\min(p,m)$, we obtain 
\begin{align}
|  |
{\bf f}(T) - {\bf f}_{N}
 |  |_2
&\leq
(1+C\Delta t +D\Delta t^2)
Z_{N-1} \Delta t^{z}
+E\Delta t^{z+1} 
+ K_{N-1}\Delta t^{z+1} 
+ M \Delta t^{z+1} \nonumber \\ 
\nonumber
&= \left[ (1+C\Delta t +D\Delta t^2)
Z_{N-1} + E \Delta t + K_{N-1} \Delta t + M \Delta t \right] \Delta t^z,
\end{align}
concluding the proof.
\hfill\(\qed\)
\end{proof}
Since the constants $M,C,D$, and $E$ are fixed in time, the local error 
coefficients $K_{j}$, which depend only on the untruncated scheme 
\eqref{discrete-time-ode}, determine if the error blows
up as the temporal grid is refined.
Hereafter we provide several examples of globally 
convergent rank-adaptive step-truncation methods. 
In each example, $\mathfrak{T}_{{\bf r}}$ denotes any optimal or 
quasi-optimal truncation operator, e.g., the best rank-$\bf r$ 
truncation operator \eqref{ht-best} or the SVD 
truncation operator \eqref{ht-SVD}.

\subsection{Rank-adaptive Euler scheme}
\label{subsec:adaptive-euler}
Our first example is a first-order method 
for solving \eqref{mol-ode} based on Euler forward. 
From Theorem \ref{thm:st-global-error}, we know that 
the scheme 
\begin{equation}
\label{adaptive-euler-method0}
{\bf f}_{k+1} =
{\mathfrak{T}}_{{\bf r}_k}({\bf f}_k +
\Delta t{\bf N}( {\bf f}_k))
\end{equation}
is order one in $\Delta t$, provided 
the vector field $\bf N$ is Lipschitz 
and the truncation rank ${\bf r}_{k}$ 
satisfies 
\begin{equation}
\nonumber
\| {\bf f}_k + \Delta t {\bf N}({\bf f}_k) - \mathfrak{T}_{{\bf r}_k} ({\bf f}_k + \Delta t {\bf N}({\bf f}_k ))\|_2 \leq M \Delta t^2, 
\end{equation}
for all $k = 1,2,\ldots$.
Applying the nonlinear vector field $\bf N$ to the solution tensor 
$\bf f$ can result in a tensor with large rank. 
Therefore, it may be desirable to apply a tensor 
truncation operator to ${\bf N}({\bf f})$ at 
each time step. To implement this, 
we build the additional truncation 
operator into the increment function 
\begin{equation}
\label{modified_inc_func_first_order}
{\bf \Phi}({\bf N}, {\bf f}_k, {\bf s}_k, \Delta t) =
{\mathfrak{T}}_{{\bf s}_k}({\bf N}( {\bf f}_k)),
\end{equation}
to obtain the new scheme 
\begin{equation}
\label{adaptive-euler-method}
{\bf f}_{k+1} =
{\mathfrak{T}}_{{\bf r}_k}({\bf f}_k +
\Delta t {\mathfrak{T}}_{{\bf s}_k}({\bf N}( {\bf f}_k))).
\end{equation}
We now determine conditions for ${\bf s}_k$ and ${\bf r}_k$ 
which make the scheme \eqref{adaptive-euler-method} 
first-order. 
To address consistency, suppose ${\bf f}(t_{k+1})$ is 
the analytic solution to \eqref{mol-ode} with initial 
condition ${\bf f}_k$ at time $t_k$. 
Then, bound the local truncation error as
\begin{align}
\nonumber
\left | \left |
{\bf f}(t_{k+1}) - {\bf f}_{k+1}
\right | \right |_2
&\leq
\left | \left |
{\bf f}(t_{k+1})
-({\bf f}_k + \Delta t {\bf N}({\bf f}_k))
\right | \right |_2
+
\left | \left |
{\bf f}_k + \Delta t {\bf N}({\bf f}_k)-
{\mathfrak{T}}_{{\bf r}_k}({\bf f}_k +
\Delta t{\mathfrak{T}}_{{\bf s}_k}({\bf N}( {\bf f}_k)))
\right | \right |_2\\
\nonumber
&\leq
K\Delta t^{2}
+
\left | \left |
{\bf f}_k + \Delta t {\bf N}({\bf f}_k)-
({\bf f}_k +
\Delta t{\mathfrak{T}}_{{\bf s}_k}({\bf N}( {\bf f}_k)))
\right | \right |_2\\ \nonumber
&\quad +
\left | \left |{\bf f}_k +
\Delta t{\mathfrak{T}}_{{\bf s}_k}({\bf N}( {\bf f}_k))
-
{\mathfrak{T}}_{{\bf r}_k}({\bf f}_k +
\Delta t{\mathfrak{T}}_{{\bf s}_k}({\bf N}( {\bf f}_k)))
\right | \right |_2\\
&=K\Delta t^{2}
+\Delta t 
\left | \left |
{\bf N}({\bf f}_k)-
{\mathfrak{T}}_{{\bf s}_k}({\bf N}( {\bf f}_k))
\right | \right |_2
+
\left | \left |{\bf f}_k +
\Delta t{\mathfrak{T}}_{{\bf s}_k}({\bf N}( {\bf f}_k))
-
{\mathfrak{T}}_{{\bf r}_k}({\bf f}_k +
\Delta t{\mathfrak{T}}_{{\bf s}_k}({\bf N}( {\bf f}_k)))
\right | \right |_2.
\nonumber
\end{align}
From this bound, we see that by selecting 
${\bf s}_k$ and ${\bf r}_k$ so that
\begin{equation}
\begin{aligned}
&\left | \left |
{\bf N}({\bf f}_k)-
{\mathfrak{T}}_{{\bf s}_k}({\bf N}( {\bf f}_k))
\right | \right |_2 \leq M_1 \Delta t , \\
&\left | \left |{\bf f}_k +
\Delta t{\mathfrak{T}}_{{\bf s}_k}({\bf N}( {\bf f}_k))-
{\mathfrak{T}}_{{\bf r}_k}({\bf f}_k +
\Delta t{\mathfrak{T}}_{{\bf s}_k}({\bf N}( {\bf f}_k)))
\right | \right |_2 \leq M_2 \Delta t^{2},
\nonumber
\end{aligned}
\end{equation}
for all $k = 1,2,\ldots$, 
yields an order one local truncation error for 
\eqref{adaptive-euler-method}.
To address stability, we show that the increment function 
\eqref{modified_inc_func_first_order} satisfies \eqref{stability_of_increment_function} with $m = 1$, 
assuming $\bf N$ is Lipschitz. Indeed, 
\begin{align}
	\nonumber
    \left | \left |
    {\mathfrak{T}}_{{\bf s}_k}({\bf N}( \hat{\bf f}))
    -   {\mathfrak{T}}_{{\bf s}_k}({\bf N}( \tilde{\bf f}))
    \right | \right |_2
    &\leq
    \left | \left |
    {\mathfrak{T}}_{{\bf s}_k}({\bf N}( \hat{\bf f}))
    -{\bf N}( \hat{\bf f})
    \right | \right |_2
    +
    \left | \left |
    {\bf N}( \hat{\bf f})
    -
    {\bf N}( \tilde{\bf f})
    \right | \right |_2
    +
    \left | \left |
    {\bf N}( \tilde{\bf f})
    -   {\mathfrak{T}}_{{\bf s}_k}({\bf N}( \tilde{\bf f}))
    \right | \right |_2\\
    &\leq 
     L \left | \left |
    \hat{\bf f}-\tilde{\bf f}
    \right | \right |_2+ 2M_1\Delta t,
\nonumber
\end{align}
where $L$ is the Lipschitz constant of $\bf N$.
Now applying Theorem 
\ref{thm:st-global-error} proves that 
the rank-adaptive Euler method
\eqref{adaptive-euler-method} has $O(\Delta t)$ global error.

\subsection{Rank-adaptive explicit midpoint scheme}
\label{subsec:adaptive-midpoint}

Consider the following rank-adaptive step-truncation method 
based on the explicit midpoint rule (see \cite[II.1]{HairerErnst1993SODE})
\begin{equation}
\label{adaptive-midpoint-method_1}
    {\bf f}_{k+1} =
    {\mathfrak{T}}_{{\bm \alpha}_k} \left (
    {\bf f}_{k} + \Delta t
     \left (
    {\bf N}\left (
    {\bf f}_{k} + \frac{\Delta t}{2}
    {\bf N}({\bf f}_{k})
    \right )\right ) \right ).
\end{equation}
We have proven in Theorem \ref{thm:st-global-error} that \eqref{adaptive-midpoint-method_1} is order 2 in $\Delta t$, 
provided the vector field $\bf N$ is Lipschitz 
and the truncation rank ${\bm \alpha}_{k}$ satisfies 
\begin{equation}
\nonumber
\| {\bf a}_k - \mathfrak{T}_{{\bm \alpha}_k} ({\bf a}_k)\|_2 \leq M \Delta t^3, 
\end{equation}
for all $k = 1,2,\ldots$. Here, 
\begin{equation}
\nonumber
{\bf a}_k =  {\bf f}_{k} + \Delta t
     \left (
    {\bf N}\left (
    {\bf f}_{k} + \frac{\Delta t}{2}
    {\bf N}({\bf f}_{k})
    \right )\right ) 
\end{equation}
denotes the solution tensor at time $t_{k+1}$ 
prior to truncation.
For the same reasons we discussed 
in section \ref{subsec:adaptive-euler}, 
it may be desirable to insert truncation 
operators inside the increment 
function. For our rank-adaptive explicit midpoint method 
we consider the increment function 
\begin{equation}
\label{increment_fun_adapt_mid}
{\bf \Phi}({\bf N}, {\bf f}_k, {\bm \beta}_k, {\bm \gamma}_k,\Delta t) = 
{\mathfrak{T}}_{{\bm \beta}_k}\left (
    {\bf N}\left (
    {\bf f}_{k} + \frac{\Delta t}{2}
    {\mathfrak{T}}_{{\bm \gamma}_k}(
    {\bf N}({\bf f}_{k}))
    \right )\right ) ,
\end{equation}
which results in the step-truncation scheme
\begin{equation}
\label{adaptive-midpoint-method_2}
    {\bf f}_{k+1} =
    {\mathfrak{T}}_{{\bm \alpha}_k} \left (
    {\bf f}_{k} + \Delta t
    {\mathfrak{T}}_{{\bm \beta}_k}\left (
    {\bf N}\left (
    {\bf f}_{k} + \frac{\Delta t}{2}
    {\mathfrak{T}}_{{\bm \gamma}_k}(
    {\bf N}({\bf f}_{k}))
    \right )\right ) \right ).
\end{equation}
Following a similar approach as in 
section \ref{subsec:adaptive-euler}, 
we aim to find conditions on ${\bm \alpha}_k$, 
${\bm\beta}_k$ and ${\bm\gamma}_k$ so that the 
local truncation error of the scheme 
\eqref{adaptive-midpoint-method_2} is order 2. 
For ease of notation, let us denote the truncation errors by
$\varepsilon_{{\bm \kappa}} = 
\left\|{\bf g} - {\mathfrak{T}}_{{\bm \kappa}} 
({\bf g})\right\|_2$, where 
${\bm \kappa }={\bm \alpha}_k$, ${\bm \beta}_k$, or 
${\bm \gamma}_k$.
The local truncation error of the scheme \eqref{adaptive-midpoint-method_2} can be estimated as 
\begin{align}
	\nonumber
    \left | \left |
    {\bf f}(t_{k+1}) - {\bf f}_{k+1}
    \right | \right |_2
    &\leq \varepsilon_{{\bm\alpha}_k}
    +\left | \left |
    {\bf f}(t_{k+1}) - \left (
    {\bf f}_k + \Delta t
    {\mathfrak{T}}_{{\bm \beta}_k}\left (
    {\bf N}\left (
    {\bf f}_k + \frac{\Delta t}{2}
    {\mathfrak{T}}_{{\bm \gamma}_k}(
    {\bf N}({\bf f}_k))
    \right )\right )
    \right )
    \right | \right |_2\\ \nonumber
     &\leq \varepsilon_{{\bm\alpha}_k}
     +\Delta t\varepsilon_{{\bm\beta}_k}
     +\left | \left |
    {\bf f}(t_{k+1}) - \left (
    {\bf f}_k + \Delta t
    {\bf N}\left (
    {\bf f}_k + \frac{\Delta t}{2}
    {\mathfrak{T}}_{{\bm \gamma}_k}(
    {\bf N}({\bf f}_k))
    \right )\right )
    \right | \right |_2\\
    &\leq K\Delta t^{3} +\varepsilon_{{\bm\alpha}_k}
     +\Delta t\varepsilon_{{\bm\beta}_k} +
     \frac{L\Delta t^2}{2}\varepsilon_{{\bm\gamma}_k},
\nonumber
\end{align}
where $L$ is the Lipschitz constant of $\bf N$.
From this bound, we see that if the 
truncation ranks ${\bm \alpha}_k, {\bm\beta}_k$ and
${\bm\gamma}_k$ are chosen such that 
\begin{equation}
\label{mid_point_trun_requirements}
\varepsilon_{{\bm\alpha}_k} \leq A\Delta t^3, \qquad
\varepsilon_{{\bm\beta}_k} \leq B\Delta t^2, \qquad
\varepsilon_{{\bm\gamma}_k}\leq G\Delta t, 
\end{equation}
for some constants $A$, $B$, and $G$, then the 
local truncation error of the scheme 
\eqref{adaptive-midpoint-method_2} is 
order $2$ in $\Delta t$.
Also, if \eqref{mid_point_trun_requirements} 
is satisfied then the stability requirement \eqref{stability_of_increment_function} is 
also satisfied. Indeed,
\begin{align}
	\nonumber
    &\left | \left |
    {\mathfrak{T}}_{{\bm \beta}_k}\left (
    {\bf N}\left (
    \hat{\bf f} + \frac{\Delta t}{2}
    {\mathfrak{T}}_{{\bm \gamma}_k}(
    {\bf N}(\hat{\bf f}))
    \right )\right )
    -{\mathfrak{T}}_{{\bm \beta}_k}\left (
    {\bf N}\left (
    \tilde{\bf f} + \frac{\Delta t}{2}
    {\mathfrak{T}}_{{\bm \gamma}_k}(
    {\bf N}(\tilde{\bf f}))
    \right )\right )
    \right | \right |_2
    \leq 
    \left (
     L+\frac{L}{2}\Delta t
    \right )\left | \left |
    \hat{\bf f} - \tilde{\bf f}
    \right | \right |_2 +2\varepsilon_{{\bm\beta}_k}+
     {L\Delta t}\varepsilon_{{\bm\gamma}_k}
\end{align}
holds for all tensors $\hat{\bf f}, \tilde{\bf f} \in \mathbb{R}^{n_1 \times \cdots \times n_d}$.
To arrive at the above relationship, we applied
triangle inequality several times to pull out
the $\varepsilon_{\bm \kappa}$ terms and then used 
Lipschitz continuity of $\bf N$ multiple times.
Thus, if the truncation ranks ${\bm \alpha}_k, {\bm\beta}_k$ and
${\bm\gamma}_k$ are chosen to satisfy \eqref{mid_point_trun_requirements} 
and the vector field $\bf N$ is Lipschitz, then 
Theorem \ref{thm:st-global-error} 
proves the method \eqref{adaptive-midpoint-method_2}
has $O(\Delta t^2)$ global error.

\subsection{Rank-adaptive Adams-Bashforth scheme}
\label{subsec:adaptive-adams}
With some minor effort we can extend the rank-adaptive 
global error estimates to the well-known multi-step methods 
of Adams and Bashforth (see \cite[III.1]{HairerErnst1993SODE}).
These methods are of the form
\begin{equation}
    \label{adams-bashforth}
    {\bf f}_{k+1} = {\bf f}_k +
    \Delta t\sum_{j=0}^{s-1}b_j {\bf N}({\bf f}_{k-j}),
\end{equation}
where $s$ is the number of steps.
A rank-adaptive step-truncation version of this method 
is
\begin{equation}
    \label{adaptive-ab}
    {\bf f}_{k+1} ={\mathfrak T}_{{\bm \alpha}_k}\left (
    {\bf f}_k +
    \Delta t
    {\mathfrak T}_{{\bm \beta}_k}\left (
    \sum_{j=0}^{s-1}b_j
    {\mathfrak T}_{{\bm \gamma}_{k}({j})}\left (
    {\bf N}({\bf f}_{k-j})
    \right )
    \right )
    \right ).
\end{equation}
In order to obtain a global error estimate 
for \eqref{adaptive-ab}, we follow the same steps 
as before. First we prove consistency, then 
we prove stability, and finally combine these 
results to obtain a global convergence result.
For consistency, let  
${\bf f}_0={\bf f}(t_0)$, ${\bf f}_1={\bf f}(t_1)$, $\dots$,
${\bf f}_{s-1}={\bf f}(t_{s-1})$
be the exact solution to \eqref{mol-ode} given at the 
first $s$ time steps. 
For ease of notation, we do not let the truncation rank 
depend on time step $k$ as we are only analyzing one iteration 
of the multi-step scheme \eqref{adaptive-ab}. 
Also, define the truncation errors 
$\varepsilon_{\bm \kappa} = 
\left\|{\bf g} - {\mathfrak{T}}_{\bm \kappa}({\bf g})\right\|_2$, where 
${\bm \kappa }={\bm \alpha}$, ${\bm \beta}$, 
${{\bm \gamma}(j)}$, $j=0,\dots ,s-1$.
The local error admits the bound
\begin{align}
    \nonumber
    \left |\left |
        {\bf f}(t_s) - {\bf f}_s
    \right |\right |_2
    &\leq
    \varepsilon_{\bm \alpha}+
    \left |\left |
     {\bf f}(t_s) -
     \left (
        {\bf f}_{s-1}
        +
        \Delta t
        {\mathfrak T}_{{\bm \beta}}\left (
        \sum_{j=0}^{s-1}b_j
        {\mathfrak T}_{{\bm \gamma}({j})}\left (
        {\bf N}({\bf f}_{s-1-j})
        \right )
        \right )
     \right )
    \right |\right |_2\\  \nonumber
    &\leq 
    \varepsilon_{\bm \alpha}+
    \Delta t\varepsilon_{\bm \beta}
    +\left |\left |
     {\bf f}(t_s) -
     \left (
        {\bf f}_{s-1}
        +
        \Delta t
        \sum_{j=0}^{s-1}b_j
        {\mathfrak T}_{{\bm \gamma}({j})}\left (
        {\bf N}({\bf f}_{s-1-j})
        \right )
     \right )
    \right |\right |_2\\
    \nonumber
    &\leq 
    \varepsilon_{\bm \alpha}+
    \Delta t\varepsilon_{\bm \beta}
    +\Delta t\sum_{j=0}^{s-1}|b_j|
        \varepsilon_{{\bm \gamma}(j)}
    +\left |\left |
     {\bf f}(t_s) -
     \left (
        {\bf f}_{s-1}
        +
        \Delta t
        \sum_{j=0}^{s-1}b_j
        {\bf N}({\bf f}_{s-1-j})
     \right )
    \right |\right |_2 .
\end{align}
The last term is the local error for an order-$s$
Adams-Bashforth method \eqref{adams-bashforth}.
Therefore, the local error of the step-truncation 
method \eqref{adaptive-ab} is also of order $s$ 
if the truncation ranks ${\bm \alpha}$, ${\bm \beta}$,
and ${{\bm \gamma}(j)}$ are chosen such that 
\begin{equation}
\label{truncation_errors_adams}
\varepsilon_{\bm \alpha} \leq A\Delta t^{s+1}, \qquad
\varepsilon_{\bm \beta} \leq B\Delta t^{s}, \qquad
\varepsilon_{{\bm \gamma}(j)} \leq G_j\Delta t^{s}.
\end{equation}
To address stability, we first need to generalize the stability condition 
\eqref{stability_of_increment_function} to the increment function 
\begin{equation}
\label{adams_increment_function}
{\bf \Phi}({\bf N},{\bf f}_1,{\bf f}_2, \ldots, {\bf f}_s, \Delta t) =  
 {\mathfrak T}_{{\bm \beta}_k}\left (
    \sum_{j=0}^{s-1}b_j
    {\mathfrak T}_{{\bm \gamma}_{k}({j})}\left (
    {\bf N}({\bf f}_{k-j})
    \right )
    \right )
\end{equation}
for the multi-step method \eqref{adams-bashforth}. 
A natural choice is 
\begin{equation}
\label{explicit-adams-stability}
\left | \left |{\bf \Phi}({\bf N},\hat{\bf f}_1,
\hat{\bf f}_2,\dots,\hat{\bf f}_{s}, \Delta t) - 
{\bf \Phi}({\bf N}, {\tilde{\bf f}}_1,
{\tilde{\bf f}}_2,\dots,{\tilde{\bf f}}_s, \Delta t)
\right | \right |_2 \leq 
\sum_{j=1}^s C_j
\left | \left |
\hat{\bf f}_j - {\tilde{\bf f}}_j
\right | \right |_2
+E\Delta t^{m}.
\end{equation}
Clearly, for $s=1$ the criterion 
\eqref{explicit-adams-stability} specializes to 
the stability criterion given in 
\eqref{stability_of_increment_function}. We have the bound 
\begin{align}
\nonumber
\left | \left |{\bf \Phi}({\bf N},\hat{\bf f}_1,
\hat{\bf f}_2,\dots,\hat{\bf f}_{s}, \Delta t) - 
{\bf \Phi}({\bf N}, {\tilde{\bf f}}_1,
{\tilde{\bf f}}_2,\dots,{\tilde{\bf f}}_s, \Delta t)
\right | \right |_2 &\leq  
    \left | \left | 
    {\mathfrak T}_{{\bm \beta}}\left (
    \sum_{j=0}^{s-1}b_j
    {\mathfrak T}_{{\bm \gamma}({j})}\left (
    {\bf N}(\hat{\bf f}_{s-j})
    \right )
    \right )
    -
    {\mathfrak T}_{{\bm \beta}}\left (
    \sum_{j=0}^{s-1}b_j
    {\mathfrak T}_{{\bm \gamma}({j})}\left (
    {\bf N}(\tilde{\bf f}_{s-j})
    \right )
    \right )
    \right | \right |_2
    \\ \nonumber
    &\leq 2\varepsilon_{\bm \beta}
    +2\sum_{j=0}^{s-1}|b_j|
        \varepsilon_{{\bm \gamma}(j)}
    +
    \sum_{j=0}^{s-1}|b_j|
    \left | \left | 
    {\bf N}(\hat{\bf f}_{s-j})
    -
    {\bf N}(\tilde{\bf f}_{s-j})
    \right | \right |_2 \\
    &\leq 2\varepsilon_{\bm \beta}
    +2\sum_{j=0}^{s-1}|b_j|
        \varepsilon_{{\bm \gamma}(j)}
    +
    \sum_{j=0}^{s-1}L|b_j|
    \left | \left | \hat{\bf f}_{s-j}
    -\tilde{\bf f}_{s-j}
    \right | \right |_2,
    \nonumber
\end{align}
where we used triangle inequality to
set aside the ${\varepsilon}_{\bm \kappa}$ terms
and subsequently applied Lipschitz continuity 
multiple times.
From the above inequality, it is seen that if 
\eqref{truncation_errors_adams} is satisfied, 
then the stability condition \eqref{explicit-adams-stability}
is also satisfied with $m = s$. 
With the consistency and stability results for the 
multistep step-truncation method \eqref{adaptive-ab} just obtained, it is straightforward to 
obtain the following  global error estimate 
for \eqref{adaptive-ab}. 
\begin{corollary}[Global error of rank-adaptive Adams-Bashforth scheme]
\label{cor:ab-global-error}
Assume
${\bf f}_0={\bf f}(t_0)$, ${\bf f}_1={\bf f}(t_1)$, $\dots$,
${\bf f}_{s-1}={\bf f}(t_{s-1})$ are given initial steps
for a convergent order-$s$ method of the form
\eqref{adams-bashforth}, 
and assume $\bf N$ is Lipschitz with constant $L$.
If the rank-adaptive step-trunctation method \eqref{adaptive-ab} 
is order-$s$ consistent with \eqref{mol-ode}, 
and the corresponding increment function 
${\bf \Phi}$ defined in \eqref{adams_increment_function} 
satisfies the stability condition \eqref{explicit-adams-stability}, then the global error satisfies
\begin{equation}
\nonumber
    \left | \left |
    {\bf f}(T) - {\bf f}_N
    \right | \right |_2
    \leq Q\Delta t^s,
\end{equation}
where $Q$ depends only on the local error constants of the
Adams-Bashforth scheme \eqref{adams-bashforth}.
\end{corollary}
\begin{proof}
The proof is based on an inductive argument on the number of 
steps taken ($N$ in equation \eqref{time_discretization}), 
similar to the proof of Theorem \ref{thm:st-global-error}. 
First,
notice that by assuming the method \eqref{adaptive-ab} 
is order-$s$ consistent, we immediately obtain 
\eqref{adams-bashforth} for the base case $N = s$.
Now, suppose that 
\begin{equation}
    \label{adaptive-ab-induction}
    \left | \left |
    {\bf f}(t_{N-k}) - {\bf f}_{N-k}
    \right | \right |_2
    \leq Q_{N-k}\Delta t^s
\end{equation}
for all $k$, $1\leq k \leq N-s$.
It can be immediately verified that
\eqref{global-error-inductive-start}-\eqref{global-err-recursion} 
were derived without 
reference to a one-step method, so 
we can follow a very similar
string of inequalities to obtain 
\begin{align}
\left | \left |
{\bf f}(T) - {\bf f}_{N}
\right | \right |_2
&\leq 
\left | \left |
{\bf f}(t_{N-1}) -
{\bf f}_{N-1}
\right | \right |_2 \nonumber \\
&\ \  +\Delta t\left | \left |
{\bf \Phi}({\bf N},{\bf f}(t_{N-1}),
{\bf f}(t_{N-2}),\dots,
{\bf f}(t_{N-s}),\Delta t)
-{\bf \Phi}({\bf N},{\bf f}_{N-1},
{\bf f}_{N-2},\dots,
{\bf f}_{N-s},\Delta t)
\right | \right |_2 \nonumber \\
&\qquad + K_{N-1}\Delta t^{s+1}+ A \Delta t^{s+1}.
\nonumber
\end{align}
Applying the stability
condition \eqref{explicit-adams-stability} yields 
\begin{equation}
\left | \left |
{\bf f}(T) - {\bf f}_{N}
\right | \right |_2
\leq 
\left | \left |
{\bf f}(t_{N-1}) -
{\bf f}_{N-1}
\right | \right |_2 +
\Delta t
\sum_{j=1}^{s} C_j
\left | \left |
{\bf f}(t_{N-j}) - {\bf f}_{N-j}
\right | \right |_2
+(K_{N-1} + A +E)\Delta t^{s+1}.
\nonumber
\end{equation}
The above inequality together with the inductive hypothesis 
\eqref{adaptive-ab-induction} implies 
\begin{equation}
\label{ab_bound_final}
\begin{aligned}
\left | \left |
{\bf f}(T) - {\bf f}_{N}
\right | \right |_2
&\leq 
Q_{N-1}\Delta t^s +
\Delta t^{s+1}
\sum_{j=1}^{s} C_j
Q_{N-j}
+(K_{N-1} + A +E)\Delta t^{s+1},
\end{aligned}
\end{equation}
concluding the proof.
\hfill\(\qed\)
\end{proof}
Similar to Theorem \ref{thm:st-global-error}, only 
the constants $K_j$ appearing in \eqref{ab_bound_final} 
depend on the time step (note that $Q_i$ also depends on 
$K_j$). Moreover, the constants $K_j$ depend only 
on the multi-step method \eqref{adams-bashforth}, 
and not on truncation.

\section{Consistency between rank-adaptive B-TSP and step-truncation schemes}
\label{sec:geom-interpreation}
In section \ref{sec:lte-svd-st}, Proposition \ref{prop:consistency_with_B-TSP}, 
we have shown that the fixed-rank step-truncation method 
\eqref{one-step-trunc-best} 
is consistent with the fixed-rank B-TSP method 
\eqref{discrete-proj-dynam}.
In this section we connect our rank-adaptive 
step-truncation schemes 
\eqref{one-step-trunc-best_adapt_rank}-\eqref{one-step-trunc-svd_adapt_rank} with the rank-adaptive B-TSP method we recently proposed in \cite{dektor2020rankadaptive}. 
In particular, we prove that the rank requirements for 
consistency in the rank-adaptive B-TSP method are equivalent 
to the rank requirements for a consistent step-truncation 
method as the temporal step size is sent to zero.
The rank-adaptive criterion for B-TSP 
checks if the normal component 
$\left\| ({\bf I} - {{\cal P}_{\bf f}}) {\bf N}({\bf f}) \right\|_2$ 
of ${\bf N}({\bf f})$ relative to the 
tangent space $T_{{\bf f}} {\cal H}_{ \bf r}$ is smaller than 
a threshold $\varepsilon_{\text{inc}}$, i.e., if 
\begin{equation}
\label{dynamic-threshold}
\left\|({\bf I} - {{\cal P}_{\bf f}}) {\bf N}({\bf f})\right\|_2\leq \varepsilon_{\text{inc}}.
\end{equation}
If \eqref{dynamic-threshold} is violated, 
then a rank increase is triggered 
and integration continues.
It was proven in \cite{dektor2020rankadaptive} that 
rank-adaptive B-TSP methods are consistent if the 
threshold in \eqref{dynamic-threshold} is chosen as 
$\varepsilon_{\text{inc}} = K\Delta t$ for any constant $K>0$.
We now show that this consistency condition 
for rank-adaptive B-TSP is equivalent to 
our rank selection 
requirements \eqref{truncation-accuracy-with-order} and 
\eqref{truncation-accuracy-with-order_3}
in the limit $\Delta t\rightarrow 0$.
\begin{proposition}[Geometric interpretation of rank addition]
\label{rmk:rank-limit-equiv}
Let ${\bf g}\in{\cal H}_{\bf r}$ and 
${\bf v}\in {\mathbb R}^{n_1\times n_2\times \dots \times n_d}$.
The following are equivalent as $\Delta t\rightarrow 0$:
\begin{align}
\label{eq:68}
\exists K\geq 0 \text{ so that } 
\left\| ({\bf I} - {{\cal P}_{\bf g}}) {\bf v} \right\|_2&\leq K\Delta t,\\
\label{eq:69}
\exists M\geq 0 \text{ so that } 
\left\|({\bf g} + \Delta t{\bf v}) - {\mathfrak{T}}_{{\bf r}}^{\text{best}}({\bf g} +
\Delta t {\bf v})\right\|_2 &\leq M\Delta t^{2}, \\
\label{eq:70}
\exists N\geq 0 \text{ so that } 
\left\|({\bf g} + \Delta t{\bf v}) - {\mathfrak{T}}_{{\bf r}}^{\text{SVD}}({\bf g} +
\Delta t {\bf v})\right\|_2 &\leq N\Delta t^{2}. 
\end{align}
\end{proposition}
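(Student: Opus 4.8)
The plan is to first dispose of the easy equivalence \eqref{eq:69} $\Leftrightarrow$ \eqref{eq:70} and then treat the substantive equivalence \eqref{eq:68} $\Leftrightarrow$ \eqref{eq:69}, which carries all of the geometric content. For the former, I would apply the quasi-optimality sandwich \eqref{svd-trunc-close} to the fixed argument ${\bf g}+\Delta t{\bf v}$, obtaining
\begin{equation*}
\left\|({\bf g}+\Delta t{\bf v})-{\mathfrak T}_{\bf r}^{\text{best}}({\bf g}+\Delta t{\bf v})\right\|_2
\leq
\left\|({\bf g}+\Delta t{\bf v})-{\mathfrak T}_{\bf r}^{\text{SVD}}({\bf g}+\Delta t{\bf v})\right\|_2
\leq
\sqrt{2d-3}\left\|({\bf g}+\Delta t{\bf v})-{\mathfrak T}_{\bf r}^{\text{best}}({\bf g}+\Delta t{\bf v})\right\|_2 .
\end{equation*}
A bound $M\Delta t^2$ on the best-truncation error then forces a bound $N\Delta t^2$ on the SVD error with $N=\sqrt{2d-3}\,M$, and conversely a bound $N\Delta t^2$ on the SVD error forces $M\Delta t^2$ on the best error with $M=N$. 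Hence it suffices to prove \eqref{eq:68} $\Leftrightarrow$ \eqref{eq:69}.

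For the core equivalence, the idea is to Taylor-expand the best-truncation operator about ${\bf g}\in{\cal H}_{\bf r}$ in the direction ${\bf v}$. Using the perturbation series \eqref{perturbationseries1} with $\varepsilon=\Delta t$ together with Lemma \ref{trunc-smooth-thm}, which identifies the Jacobian $\partial{\mathfrak T}_{\bf r}^{\text{best}}({\bf g})/\partial{\bf f}$ with the tangent-space projection ${\cal P}_{\bf g}$, I would write
\begin{equation*}
{\mathfrak T}_{\bf r}^{\text{best}}({\bf g}+\Delta t{\bf v})={\bf g}+\Delta t\,{\cal P}_{\bf g}{\bf v}+{\bf R}(\Delta t),
\end{equation*}
where ${\bf R}(\Delta t)$ collects the higher-order terms. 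Subtracting this from ${\bf g}+\Delta t{\bf v}$ yields the exact identity
\begin{equation*}
({\bf g}+\Delta t{\bf v})-{\mathfrak T}_{\bf r}^{\text{best}}({\bf g}+\Delta t{\bf v})=\Delta t\,({\bf I}-{\cal P}_{\bf g}){\bf v}-{\bf R}(\Delta t),
\end{equation*}
which ties the normal component of \eqref{eq:68} directly to the truncation error of \eqref{eq:69}.

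The two implications then follow from the triangle inequality, provided $\|{\bf R}(\Delta t)\|_2\leq C_R\Delta t^2$ for a constant $C_R$ and all sufficiently small $\Delta t$. For \eqref{eq:68} $\Rightarrow$ \eqref{eq:69}, I would estimate $\|({\bf g}+\Delta t{\bf v})-{\mathfrak T}_{\bf r}^{\text{best}}({\bf g}+\Delta t{\bf v})\|_2\leq\Delta t\,\|({\bf I}-{\cal P}_{\bf g}){\bf v}\|_2+C_R\Delta t^2\leq(K+C_R)\Delta t^2$, giving \eqref{eq:69} with $M=K+C_R$. For \eqref{eq:69} $\Rightarrow$ \eqref{eq:68}, I would isolate the normal component via $\Delta t\,\|({\bf I}-{\cal P}_{\bf g}){\bf v}\|_2\leq\|({\bf g}+\Delta t{\bf v})-{\mathfrak T}_{\bf r}^{\text{best}}({\bf g}+\Delta t{\bf v})\|_2+C_R\Delta t^2\leq(M+C_R)\Delta t^2$ and divide by $\Delta t$ to recover \eqref{eq:68} with $K=M+C_R$.

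The main obstacle is the remainder estimate $\|{\bf R}(\Delta t)\|_2=O(\Delta t^2)$. Lemma \ref{trunc-smooth-thm} only asserts that ${\mathfrak T}_{\bf r}^{\text{best}}$ is $C^1$, and a first-order expansion alone yields merely an $o(\Delta t)$ remainder, which is too weak to close \eqref{eq:68} $\Rightarrow$ \eqref{eq:69} since $o(\Delta t)$ need not be $O(\Delta t^2)$. To upgrade this I would invoke the stronger structural fact that ${\cal H}_{\bf r}$ is a smooth embedded submanifold, so its metric projection is $C^2$ (indeed smooth) in a tubular neighborhood of ${\bf g}$; equivalently, the deviation between the secant displacement $\Delta t{\bf v}$ and its tangential part is controlled by the locally bounded second fundamental form of ${\cal H}_{\bf r}$, which furnishes the quadratic bound. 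I would stress that both the neighborhood and the constant $C_R$ are local: near tensors with small singular values the curvature of ${\cal H}_{\bf r}$ grows without bound, so the expansion and its remainder bound must be asserted locally at ${\bf g}$, in line with the conditioning discussion earlier in the paper.
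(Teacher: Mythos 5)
Your proof is correct and follows essentially the same route as the paper's: you dispatch \eqref{eq:69}$\Leftrightarrow$\eqref{eq:70} via the quasi-optimality sandwich \eqref{svd-trunc-close}, and prove \eqref{eq:68}$\Leftrightarrow$\eqref{eq:69} by the first-order expansion of ${\mathfrak T}_{\bf r}^{\text{best}}$ from Lemma \ref{trunc-smooth-thm} plus the triangle inequality, arriving at the same constants ($M = K+C$, $K = M+C$) as the paper. The only difference is that you explicitly justify the $O(\Delta t^2)$ remainder bound via local smoothness of the metric projection onto the embedded submanifold ${\cal H}_{\bf r}$ (with the appropriate caveat about curvature near small singular values), whereas the paper simply asserts a constant $C$ ``obtained by a Taylor expansion''---so your added care closes a point the paper leaves implicit rather than taking a different route.
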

\begin{proof}
The equivalence between \eqref{eq:69} and \eqref{eq:70} 
is an immediate consequence of \eqref{svd-trunc-close}. 
We now prove that \eqref{eq:68} is equivalent to 
\eqref{eq:69}. For the forward implication, 
assume $\left\|({\bf I} - {{\cal P}_{\bf g}}) {\bf v} 
\right\|_2\leq K\Delta t$.
We have 
\begin{align*}
\left\|({\bf g} + \Delta t{\bf v}) - {\mathfrak{T}}_{{\bf r}}^{\text{best}}({\bf g} +
\Delta t {\bf v})\right\|_2 &\leq
\left\|({\bf g} + \Delta t{\bf v}) - ({\bf g} +\Delta t {{\cal P}_{\bf g}} {\bf v})
\right\|_2 + \Delta t^2 C\\
&= \Delta t\left\|{\bf v} - {{\cal P}_{\bf g}} {\bf v}\right\|_2
+ \Delta t^2 C\\
&\leq\Delta t^2 K + \Delta t^2 C,
\end{align*}
where $C\geq 0$ denotes a constant obtained by a Taylor expansion of
${\mathfrak{T}}_{{\bf r}}^{\text{best}}$ (see \eqref{perturbationseries}).
Setting  $M \geq K+C$, proves the forward implication. 
Conversely, if we assume $
\left\|({\bf g} + \Delta t{\bf v}) - {\mathfrak{T}}_{{\bf r}}^{\text{best}}({\bf g} +
\Delta t {\bf v})\right\|_2 \leq M\Delta t^{2}$,
then
\begin{align*}
\left\|({\bf I} - {{\cal P}_{\bf g}}) {\bf v} \right\|_2&=
\Delta t^{-1}\left\|{\bf g} + \Delta t{\bf v} -
({\bf g}+ \Delta t{{\cal P}_{\bf g}}{\bf v})\right\|_2 \\
&\leq \Delta t^{-1}\left ( \left\|({\bf g} + \Delta t{\bf v}) - {\mathfrak{T}}_{{\bf r}}^{\text{best}}({\bf g} +
\Delta t {\bf v})\right\|_2  + C\Delta t^2\right ) \\
&\leq
\Delta t M +  \Delta t C.
\end{align*}
Setting  $K \geq M+C$, we prove 
\eqref{eq:69} implies \eqref{eq:68}.
\begin{flushright}
\qed
\end{flushright}
\end{proof}
The rank increase criterion \eqref{eq:68} for B-TSP 
offers geometric intuition which is not apparent from 
the step-truncation rank criterions \eqref{eq:69}-\eqref{eq:70}. 
That is, the solution rank should increase if 
the dynamics do not admit a sufficient approximation 
on the tensor manifold tangent space. Moreover, 
the accuracy required for approximating the dynamics 
depends directly on the time step size $\Delta t$ and the 
desired order of accuracy.
{
We emphasize that
by applying
condition \eqref{eq:68}
to
\eqref{discrete-proj-dynam}
it is possible
to develop a rank-adaptive
version of the
step-truncation scheme
recently
proposed in
\cite{kieri2019projection}.
Specifically, the solution
rank $\bf r$ at each time step  can be 
chosen to satisfy a bound on the component 
of \eqref{discrete-proj-dynam} 
normal to the tensor manifold ${\cal H}_{\bf r}$.
}

{
\section{Numerical applications}
\label{sec:numerical-apps}
In this section we present and discuss numerical applications 
of the proposed rank-adaptive step-truncation methods.
%
%
%
We have seen that these methods are defined by parameters 
summarized in Table \ref{table:method-params}. 
\begin{table}
\begin{centering}
\renewcommand{\arraystretch}{1.3}
\begin{tabular}{ |c|c|c|}
\hline
 Integration Method &
 Free Parameters &
 Dependent Parameters\\
\hline
\begin{tabular}{c}
Rank-adaptive Euler
\\
(Sec. \ref{subsec:adaptive-euler})
\end{tabular}
 &$\Delta t,M_1,M_2$   &
 \begin{tabular}{rl}
 $\varepsilon_{\bf r}$&$=M_1\Delta t^{2},$\\
$\varepsilon_{\bf s}$&$=M_2\Delta t$
 \end{tabular}\\
\hline
\begin{tabular}{c}
Rank-adaptive midpoint\\
(Sec. \ref{subsec:adaptive-midpoint}) 
\end{tabular}&
$\Delta t,A,B,G$
&
\begin{tabular}{rl}
$\varepsilon_{{\bm\alpha}}$&$=A\Delta t^3,$\\
$\varepsilon_{{\bm\beta}}$&$=B\Delta t^2,$\\
$\varepsilon_{{\bm\gamma}}$&$=G\Delta t$
\end{tabular}
\\
\hline
\begin{tabular}{c}
Two-step rank-adaptive
Adams-Bashforth\\
(Sec. \ref{subsec:adaptive-adams})
\end{tabular}
&
$\Delta t,A,B,G_0,G_1$
&
\begin{tabular}{rl}
$\varepsilon_{{\bm\alpha}} $&$= A\Delta t^3,$\\
$\varepsilon_{{\bm\beta}} $&$= B\Delta t^2,$\\
$\varepsilon_{{\bm\gamma}(0)} $&$= G_0\Delta t^2$,\\
$\varepsilon_{{\bm\gamma}(1)} $&$= G_1\Delta t^2$
\end{tabular}\\
\hline
\end{tabular}\\
\renewcommand{\arraystretch}{1.0}
\end{centering}
\caption{
\label{table:method-params}
Free and dependent parameters of
the rank-adaptive step-truncation integrators
presented in Section \ref{sec:global-error}.}
\end{table}
To choose such parameters in each numerical example 
we proceed as follows: We 
first choose the time step $\Delta t$ so that the scheme 
without tensor truncation is stable. 
Theorem \ref{thm:st-global-error} then
guarantees convergence of the rank-adaptive  step-truncation
scheme for any selection of the other parameters, e.g.,  
$M_1$ and $M_2$ in the rank-adaptive Euler scheme 
listed in Table \ref{table:method-params}. 
For guidance on how to select the remaining 
parameters one may apply the results 
of section \ref{subsec:const-selection}, which 
are based on the knowledge of the 
singular values of the solution.
An alternative heuristic criterion is to select the 
free parameters roughly inverse to the time step
so that the local error parameters, e.g., $\varepsilon_{\bf r}$ and 
$\varepsilon_{\bf s}$ in Table \ref{table:method-params}, 
do not exceed a specified threshold $\varepsilon^*$, i.e., 
$\varepsilon_{\bf r}\leq \varepsilon^*$ and   
$\varepsilon_{\bf s}\leq \varepsilon^*$. 

\subsection{Rank shock problem}
\label{sec:rank-shock}

\begin{figure}
\centerline{\hspace{0.0cm}
\footnotesize 
\hspace{1cm}Solution Rank \hspace{4.5cm}
Adaptive Euler Error \hspace{0.5cm}}
\centerline{\line(1,0){420}}
\begin{center}
\includegraphics[scale=0.57]{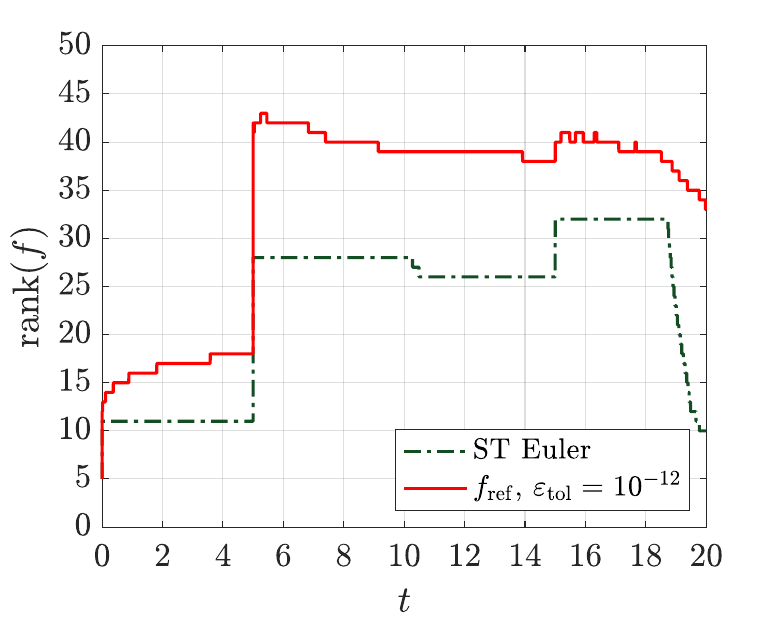}
\includegraphics[scale=0.57]{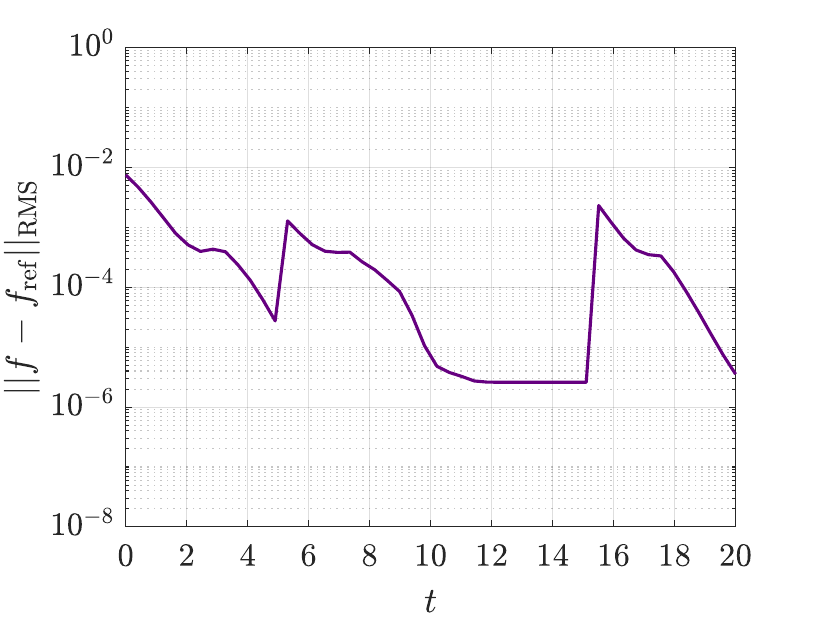}
\end{center}
\caption{
Rank shock problem.
Numerical performance
of rank-adaptive Euler method
applied to the ODE \eqref{eqn:rank-shock-problem}-\eqref{eqn:def-of-switching}. It is seen that the 
method accurately tracks the overall shape 
of the reference solution rank,
{
which was computed to a singular
value threshold of $10^{-12}$.} Moreover, the 
numerical error behaves as expected, 
decreasing as steady-state is approached.
}
\label{fig:rank-shock-result}
\end{figure}
In this section we test ability of the proposed 
rank-adaptive schemes to track accuracy
and rank for a problem where the
rank of the vector field suddenly jumps to a
higher value. To this end,
consider the following matrix-valued 
ordinary differential equation
\begin{equation}
\label{eqn:rank-shock-problem}
\frac{\text{d}{\bf f}}{\text{d}t}
=
{\bf A}{\bf f} + {\bf f}{\bf A}^{\top}
+
{\bf v}(t),
\quad
t\in[0,20],
\quad
{\bf f}(0) \in {\mathbb R}^{N\times N},
\end{equation}
where ${\bf A}$ is a symmetric negative
definite matrix and ${\bf v}(t)$ a forcing term 
that switches between a low rank and high rank
matrix 
\begin{equation}
\label{eqn:def-of-switching}
{\bf v}(t) = 
\begin{cases}
{\bf v}_{\text{high}},
&t\in (5,15)\\
{\bf v}_{\text{low}},
&t\not \in (5,15)
\end{cases}.
\end{equation}
In equation \eqref{eqn:rank-shock-problem} 
${\bf A}{\bf f} + {\bf f}{\bf A}^{\top}$ 
is a stabilizing term which is tangent to the 
fixed rank manifold at all time while 
${\bf v}(t)$ steers the solution 
off of the fixed rank manifold. 
For our numerical experiment 
we let $\bf A$ take the
form 
\begin{equation}
{\bf A}
=
\begin{bmatrix}
-b & a & 0& 0& \dots  &0\\
a  & -b & a & 0& \dots & 0\\
\vdots & & &\ddots  & & \vdots\\
0& \dots & 0&a  & -b & a & \\
0 &\dots& 0 &0& a &-b
\end{bmatrix}\qquad a,b\in \mathbb{R},
\end{equation}
which is a finite difference
stencil with shifted eigenvalues.
We set  $a=1$ and $b=3$
to ensure the matrix $\bf A$ is diagonally dominant
with negative eigenvalues. 
This guarantees that the initial value problem
\eqref{eqn:rank-shock-problem} will
be stable regardless of how large
the $N \times N$ matrix 
size is, for our demonstration we set $N=100$. 
For the forcing term ${\bf v}(t)$ 
we set 
\begin{equation}
{\bf v}_{\text{low}}=
\sum_{j=1}^{r_{\text{low}}}
{\bm \phi}_{j}{\bm \psi}_j^\top,
\qquad
{\bf v}_{\text{high}}=
\sum_{j=1}^{r_{\text{high}}}
\sigma^{j}{\bm \psi}_{j}{\bm \phi}_j^\top,
\end{equation}
with ranks $r_{\text{low}} = 6$ and
$r_{\text{high}}=25$. Here,  ${\bm \psi}_j[i] = 
\sin(2\pi ij/N)$,
${\bm \phi}_j[i] = 
\cos(2\pi ij/N)$ and 
$\sigma^j=(3/4)^j$. 
Since the vector field is discontinuous in
time, we apply the order 1 
rank-adaptive Euler method with
parameters summarized in Table \ref{rank-shock-table}.
\begin{table}
\begin{centering}
\renewcommand{\arraystretch}{1.3}
\begin{tabular}{ |c|c|c|}
\hline
 Integration Method &
 Free Parameters &
 Dependent Parameters\\
\hline
\begin{tabular}{c}
Rank-adaptive Euler
\\
(Sec. \ref{subsec:adaptive-euler})
\end{tabular}
 &
\begin{tabular}{c}
 $\Delta t=2\times 10^{-3},$\\
 $M_1=M_2=10^2$ 
\end{tabular}  &
 \begin{tabular}{rl}
 $\varepsilon_{\bf r}$&$=4\times 10^{-4},$\\
$\varepsilon_{\bf s}$&$=2\times 10$
 \end{tabular}\\
\hline
\end{tabular}\\
\renewcommand{\arraystretch}{1.0}
\end{centering}
\caption{Integration parameters for the rank shock problem \eqref{eqn:rank-shock-problem}.}
\label{rank-shock-table}
\end{table}
For quantification of the numerical error, 
we use the root mean square error of matrices (Frobenious norm)
\begin{equation}
\label{eqn:def-MSE}
\|
{\bf g} - {\bf f}
\|_{\text{RMS}}
=
\sqrt{
\sum_{i=1}^{N} \sum_{j=1}^{N}
\frac{({\bf g}[i,j]-{\bf f}[i,j])^2}
{N^2}
}.
\end{equation}
To obtain a reference solution ${f}_{\text{ref}}$ we 
simply integrate \eqref{eqn:rank-shock-problem} using RK4.
As seen in Figure \ref{fig:rank-shock-result}, the numerical 
solution successfully tracks the overall
shape of the reference solution's rank over time.
The numerical error also behaves as
expected, decreasing as a steady-state is approached.
}

\subsection{Fokker-Planck equation}
\label{subsec:fokker-planck}

In this section we apply the proposed 
rank-adaptive step-truncation 
algorithms to a Fokker-Planck equation with 
space-dependent drift and constant diffusion, 
and demonstrate their accuracy in predicting 
relaxation to statistical equilibrium.
As is well-known, the Fokker-Planck equation describes the evolution 
of the probability density function (PDF) of the state vector
solving the It\^o stochastic differential equation (SDE)
\begin{equation}
\label{Ito_SDE}
d \bm X_t = \bm \mu(\bm X_t)dt + \sigma d\bm W_t.
\end{equation}
Here, $\bm X_t$ is the $d$-dimensional state vector, $\bm \mu(\bm X_t)$ is the $d$-dimensional drift, $\sigma$ is a constant drift coefficient and $\bm W_t$ is an $d$-dimensional standard 
Wiener process. The Fokker-Planck equation that 
corresponds to \eqref{Ito_SDE} has the form 
\begin{equation}
\label{fp-lorenz-96}
\frac{{\partial} f({\bf x},t) }{\partial t}=
-\sum_{i=1}^{d}
\frac{\partial}{\partial x_i}\left(
\mu_i({\bf x}) f({\bf x},t) \right)
+ \frac{\sigma^2}{2} 
\sum_{i=1}^{d}\frac{\partial^2 f({\bf x},t)}{\partial x_i^2}, \qquad f(\bm x, 0) = f_0(\bm x),
\end{equation}
where $f_0(\bm x)$ is the PDF of the initial state $\bm X_0$.
In our numerical demonstrations, we set $\sigma = 2$,  
\begin{equation}
\mu_i({\bf x}) = 
(\gamma(x_{i+1}) - \gamma(x_{i-2}))\xi(x_{i-1}) - \phi(x_{i}), \qquad i =1,\ldots, d,
\label{82}
\end{equation}
where the functions
$\gamma(x)$, $\xi(x)$,
and $\phi(x)$ are $2 \pi$-periodic. Also, in \eqref{82} 
$x_{i+d}=x_i$. 
We solve \eqref{fp-lorenz-96} on the
flat torus $\Omega=[0,2\pi]^d$ with  dimension 
$d=2$ and $d=4$.

\begin{figure}
\centering
\centerline{\hspace{1.5cm}
\footnotesize 
Adaptive Euler \hspace{2.0cm}
Adaptive AB2    \hspace{2.5cm}
Reference}
\centerline{\line(1,0){420}}
\vspace{0.1cm}
{\rotatebox{90}{\hspace{1.6cm}\rotatebox{-90}{
\hspace{0.1cm}
\footnotesize$t=0$\hspace{0.9cm}}}}
\includegraphics[scale=0.27]{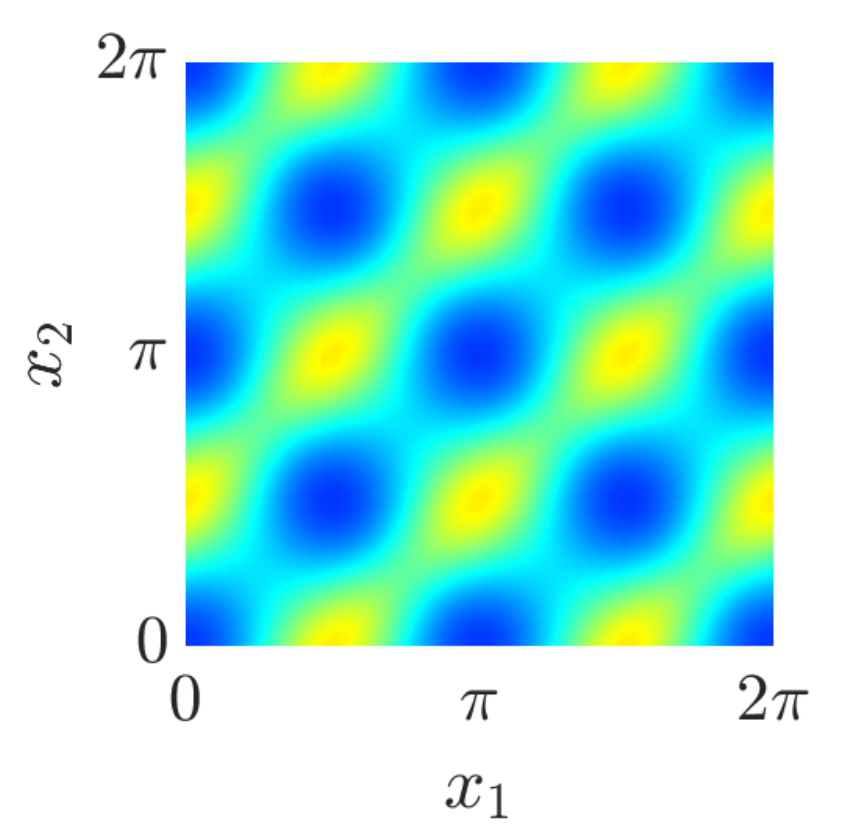}
\includegraphics[scale=0.27]{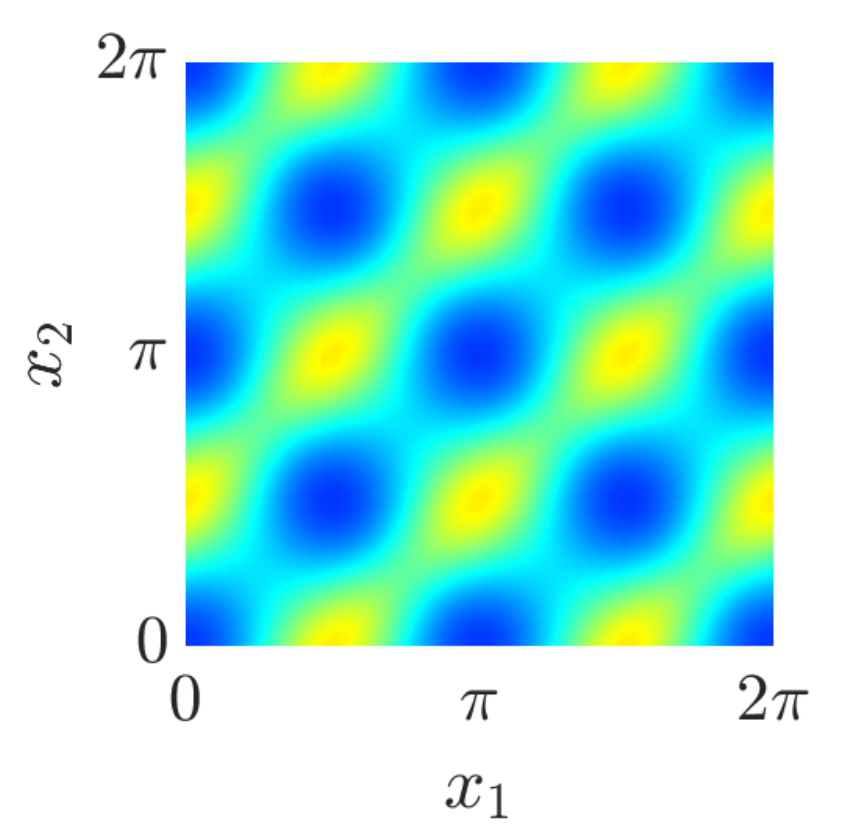}
\includegraphics[scale=0.27]{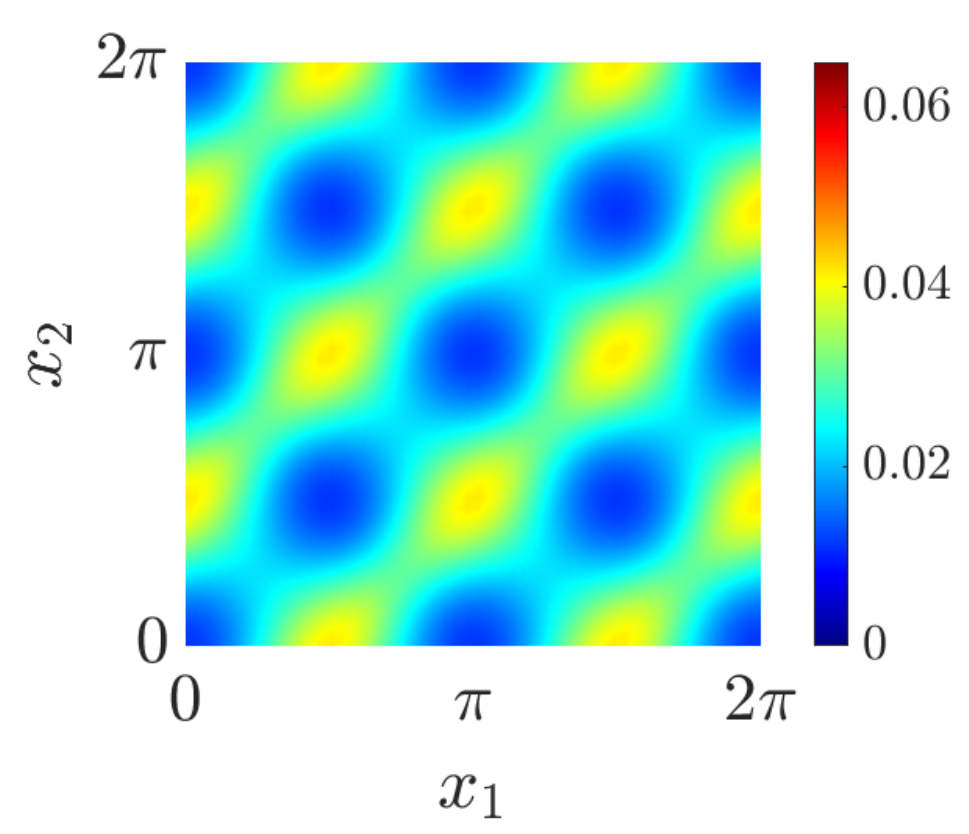}\\
{\rotatebox{90}{\hspace{1.6cm}\rotatebox{-90}{
\footnotesize$t=0.05$\hspace{0.7cm}}}}
\includegraphics[scale=0.27]{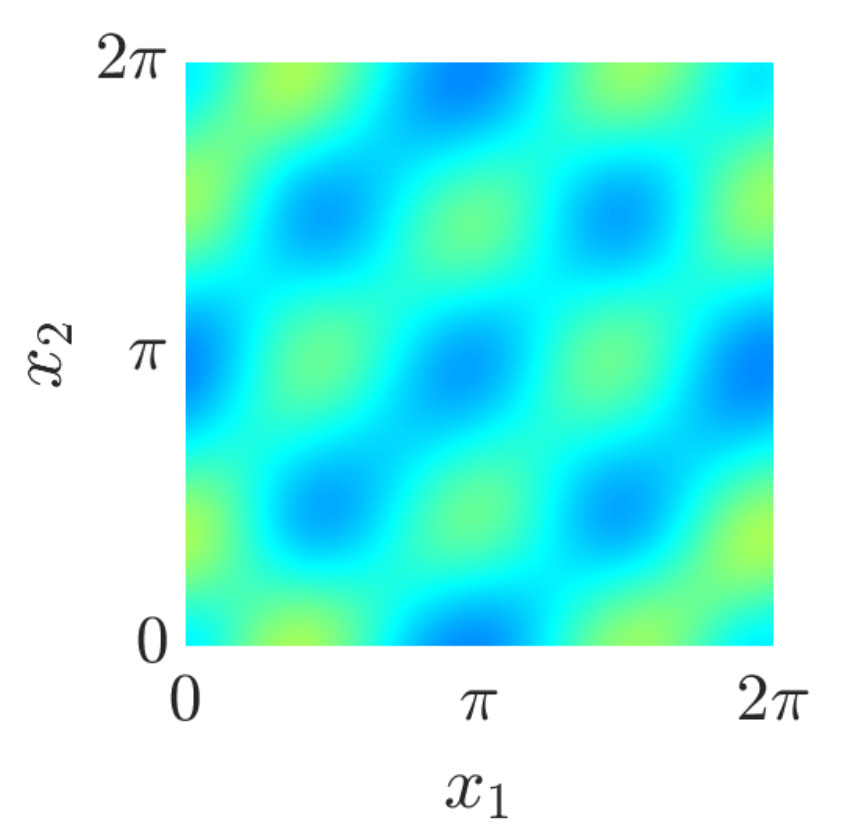}
\includegraphics[scale=0.27]{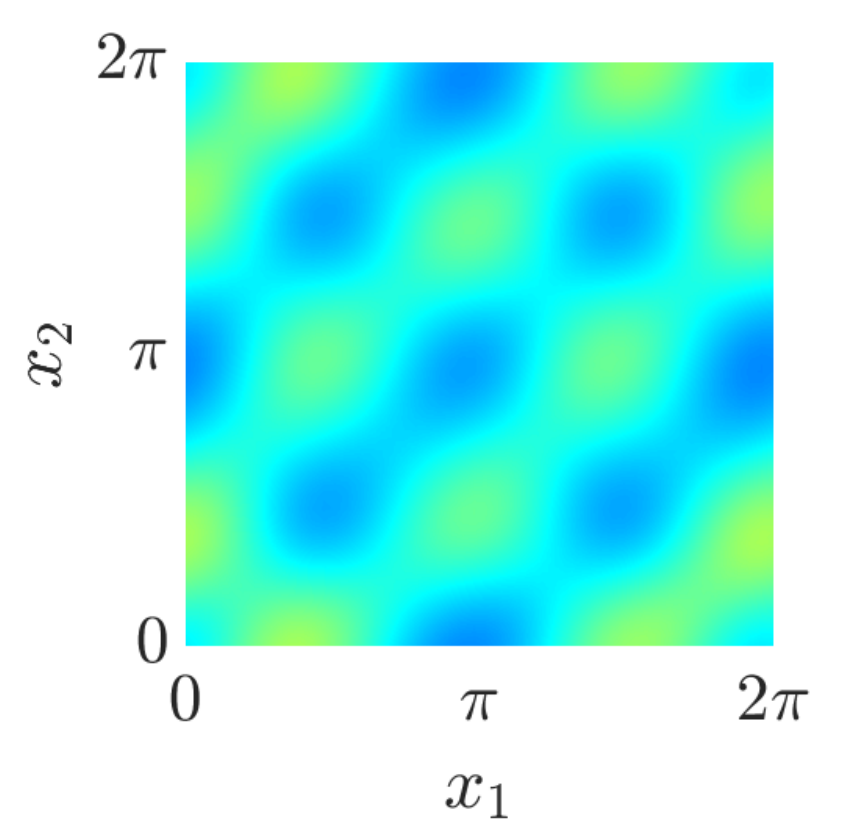}
\includegraphics[scale=0.27]{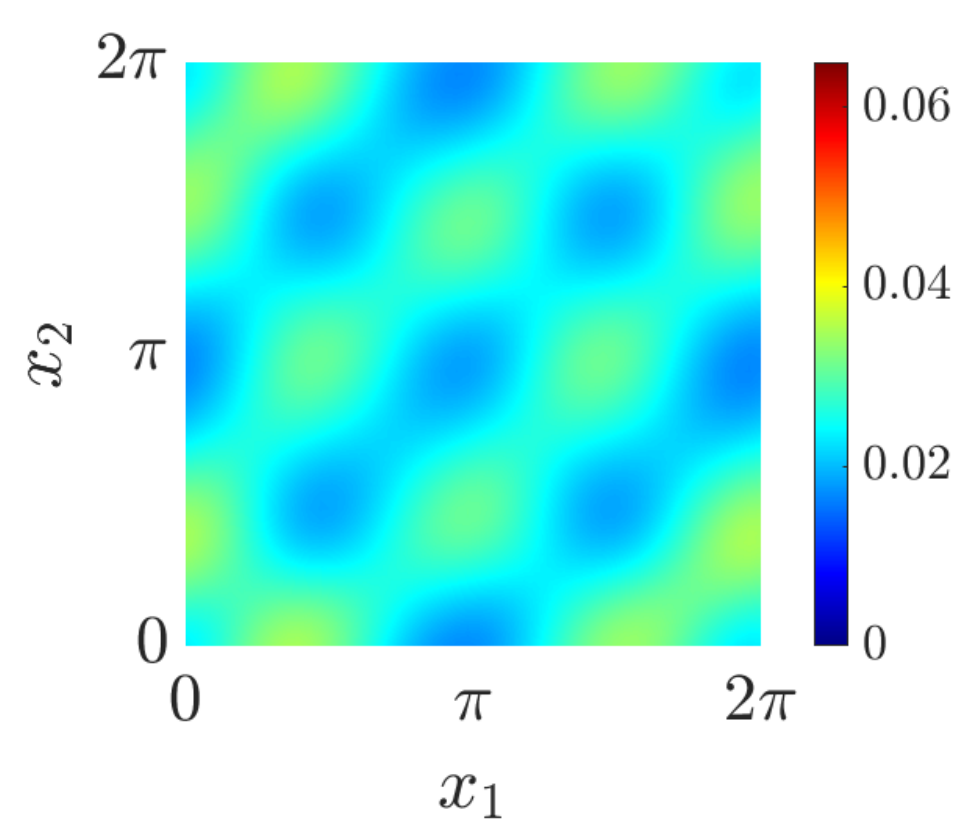}\\
{\rotatebox{90}{\hspace{1.6cm}\rotatebox{-90}{
\footnotesize$t=0.15$\hspace{0.7cm}}}}
\includegraphics[scale=0.27]{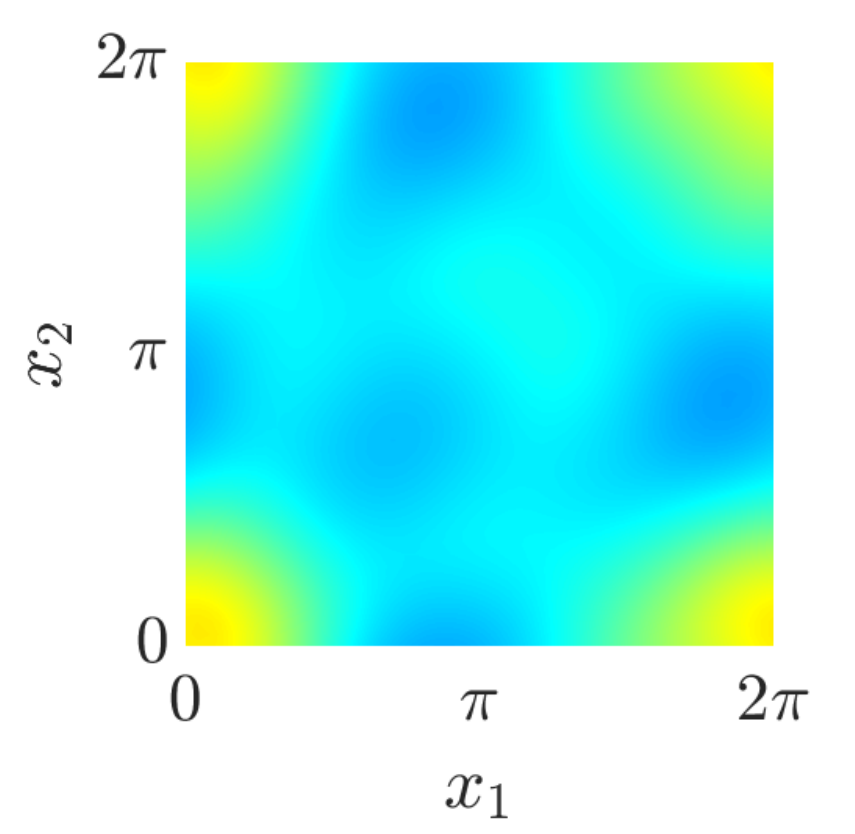}
\includegraphics[scale=0.27]{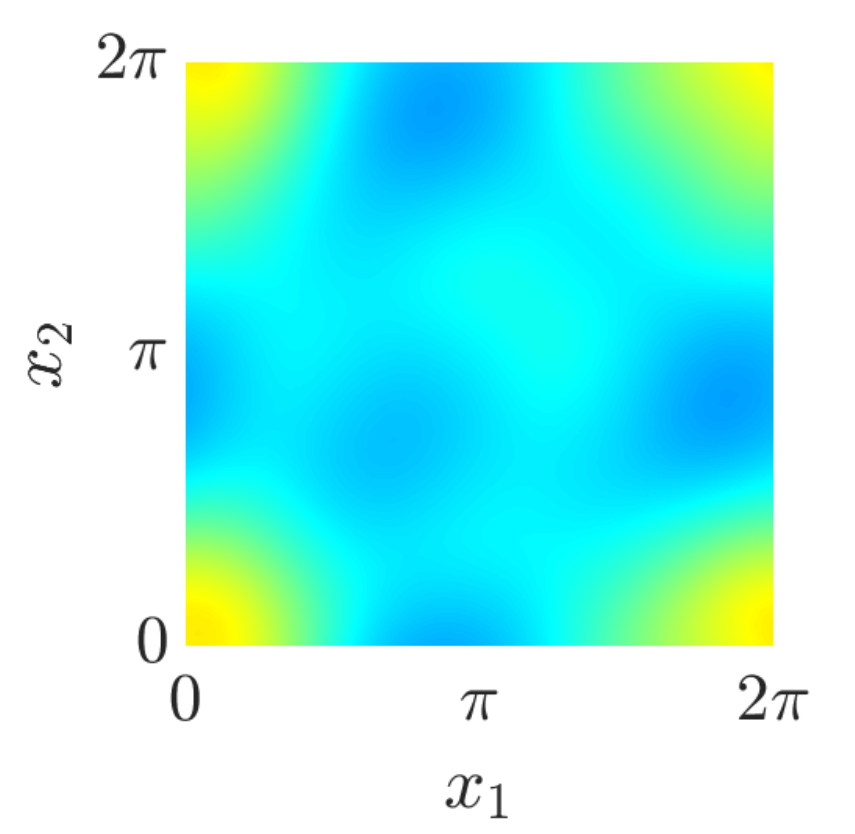}
\includegraphics[scale=0.27]{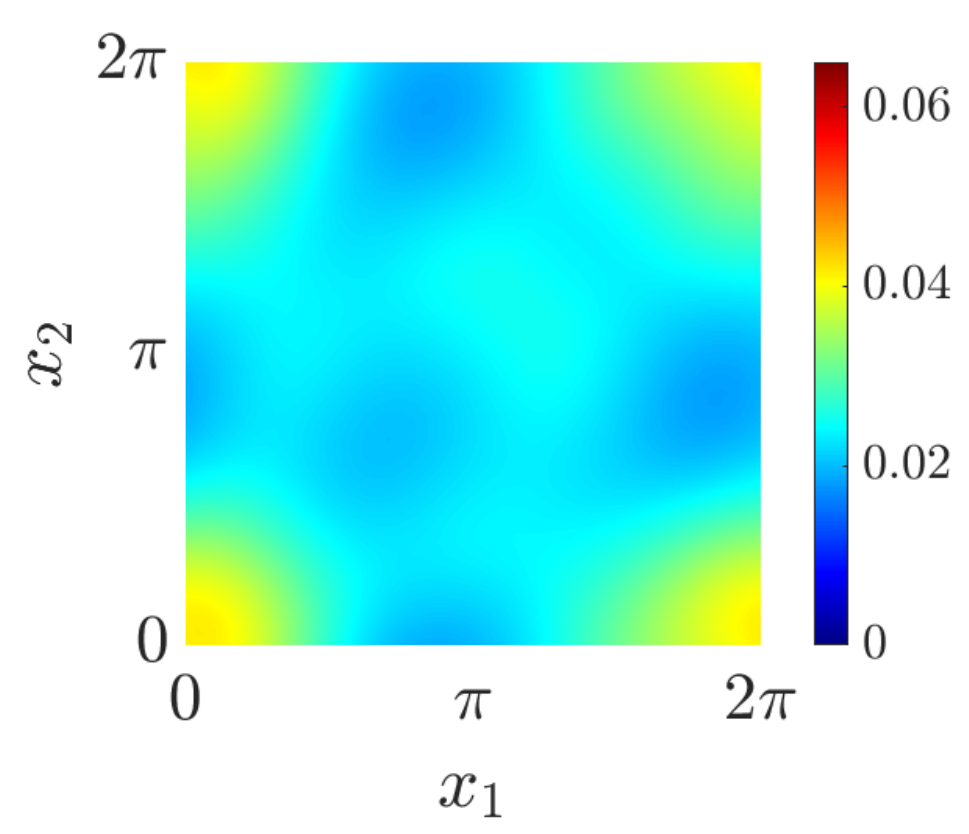}\\
{\rotatebox{90}{\hspace{1.6cm}\rotatebox{-90}{
\footnotesize$t=0.25$\hspace{0.7cm}}}}
\includegraphics[scale=0.27]{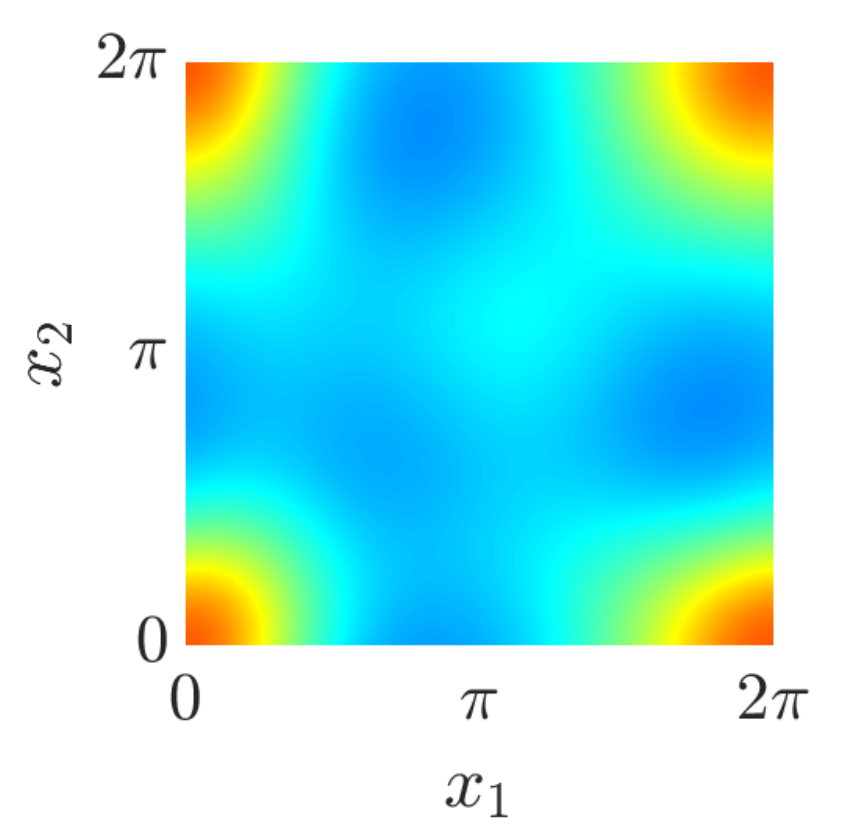}
\includegraphics[scale=0.27]{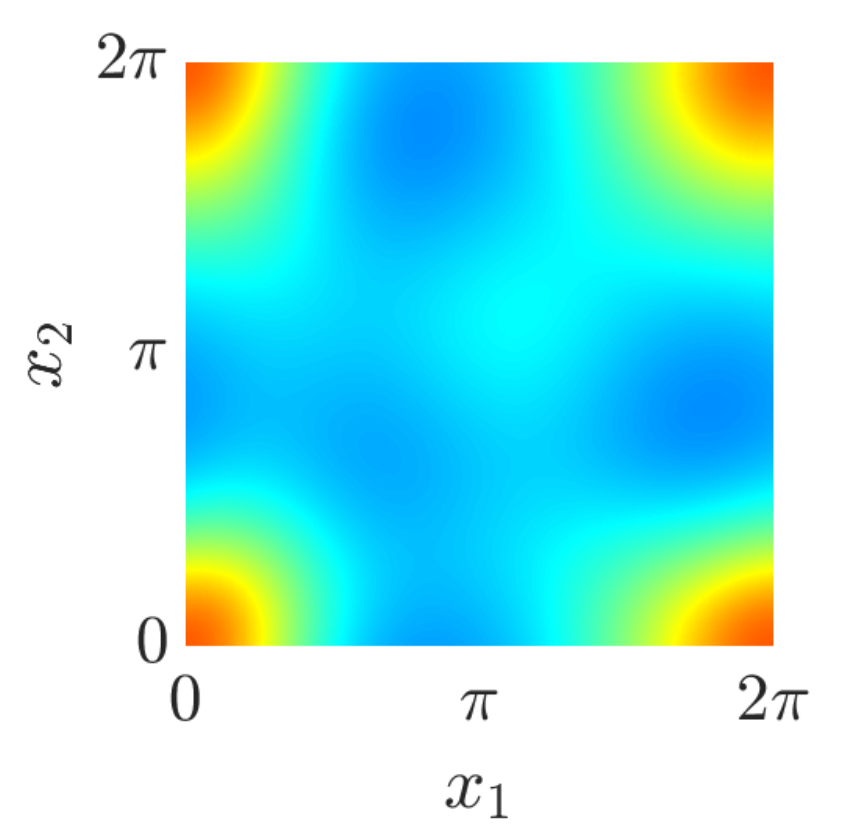}
\includegraphics[scale=0.27]{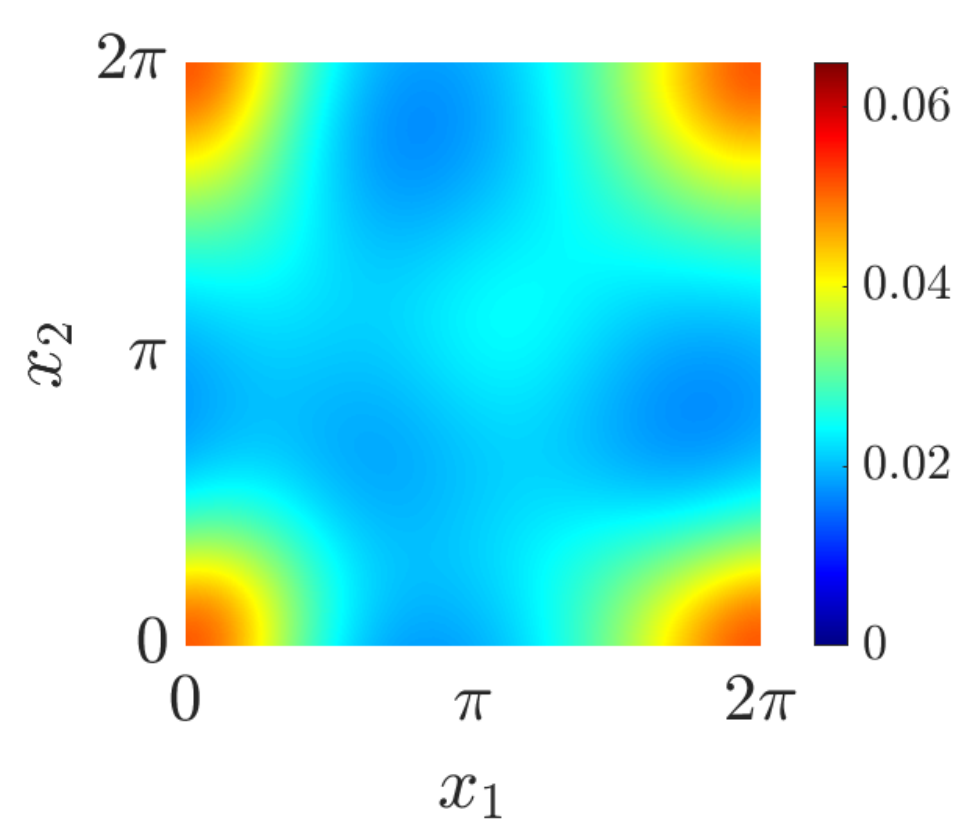}\\
{\rotatebox{90}{\hspace{1.6cm}\rotatebox{-90}{
\footnotesize{Steady State}\hspace{0.1cm}}}}
\includegraphics[scale=0.27]{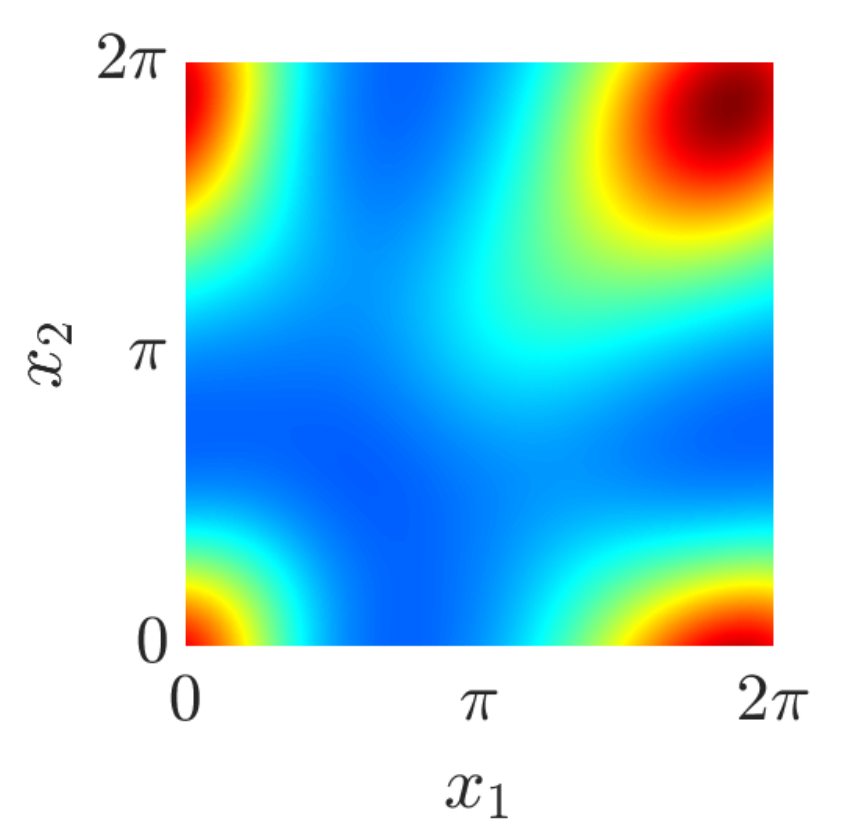}
\includegraphics[scale=0.27]{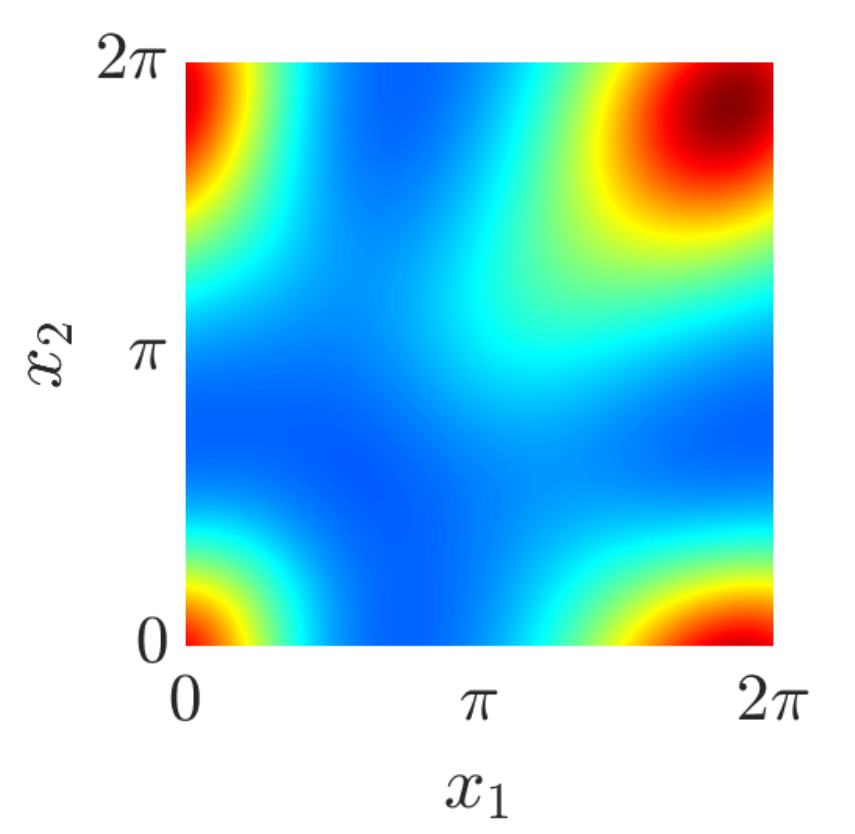}
\includegraphics[scale=0.27]{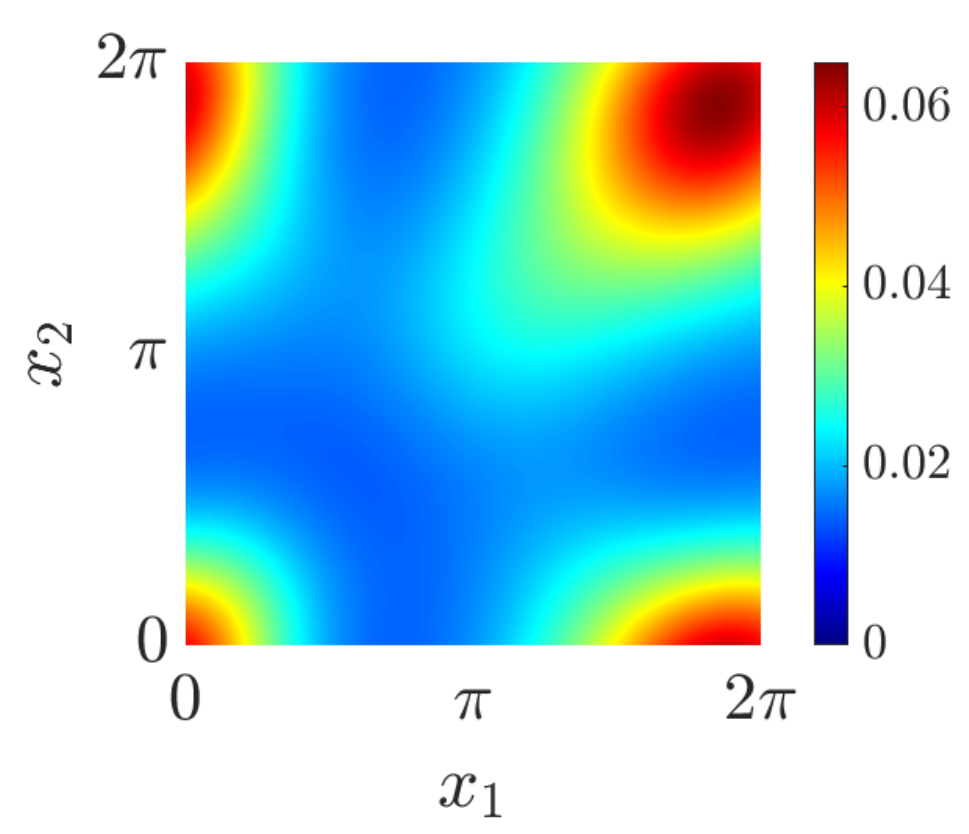}\\
\caption{Numerical solution to the Fokker-Planck equation 
\eqref{fp-lorenz-96} in dimension $d=2$ with 
initial condition \eqref{ic2d} obtained 
using three distinct methods: 
rank-adaptive explicit Euler
\eqref{adaptive-euler-method},
two-step rank-adaptive Adams-Bashforth \eqref{adaptive-ab}, 
and a reliable reference solution 
obtained by solving the ODE \eqref{mol-ode} 
corresponding to \eqref{fp-lorenz-96}.
{
The numerical results
are obtained on a $50\times 50$ spatial grid.}
The parameters for the step-truncation integrators we used in this example are detailed in Table \ref{fig:2d-tables}.
}
\label{fig:time-snapshot-plot}
\end{figure}

\begin{table}
\begin{centering}
\renewcommand{\arraystretch}{1.3}
\begin{tabular}{ |c|c|c|}
\hline
 Integration Method &
 Free Parameters &
 Dependent Parameters\\
\hline
\begin{tabular}{c}
Adaptive Euler
\\
(Sec. \ref{subsec:adaptive-euler})
\end{tabular}
&
\begin{tabular}{c}
$\Delta t=6.25\times 10^{-4},$ \\
$M_1=M_2=10^2$
\end{tabular}
 &
 \begin{tabular}{rl}
 $\varepsilon_{\bf r}$&$=3.90625\times 10^{-5},$\\
$\varepsilon_{\bf s}$&$=6.25\times 10^{-2}$
 \end{tabular}\\
\hline
\begin{tabular}{c}
Adaptive Midpoint\\
(Sec. \ref{subsec:adaptive-midpoint}) 
\end{tabular}&
\begin{tabular}{c}
$\Delta t=6.25\times 10^{-4},$\\
$A=B=10^3,$\\
$G=10^2$
\end{tabular}
&
\begin{tabular}{rl}
$\varepsilon_{{\bm\alpha}}$&$=
2.44140625\times 10^{-7},$\\
$\varepsilon_{{\bm\beta}}$&$=
3.90625\times 10^{-4},$\\
$\varepsilon_{{\bm\gamma}}$&$=
6.25\times 10^{-2}$
\end{tabular}
\\
\hline
\begin{tabular}{c}
Two-step rank-adaptive
Adams-Bashforth\\
(Sec. \ref{subsec:adaptive-adams})
\end{tabular}
&

\begin{tabular}{c}
$\Delta t=6.25\times 10^{-4},$\\
$A=B=10^3,$\\
$G_0=G_1=10^2$
\end{tabular}
&
\begin{tabular}{rl}
$\varepsilon_{{\bm\alpha}}$&$=
2.44140625\times 10^{-7},$\\
$\varepsilon_{{\bm\beta}}$&$=
3.90625\times 10^{-4},$\\
$\varepsilon_{{\bm\gamma}(0)}$&$=
3.90625\times 10^{-5},$\\
$\varepsilon_{{\bm\gamma}(1)}$&$=
3.90625\times 10^{-5},$
\end{tabular}
\\
\hline
\end{tabular}\\
\renewcommand{\arraystretch}{1.0}
\end{centering}
\caption{
Table of parameters for the rank-adaptive 
step-truncation integrators of the 
Fokker-Planck equation
\eqref{fp-lorenz-96} in dimension $d=2$ with 
initial condition \eqref{ic2d}.
The only free parameters are
the local error coefficients.
These were heuristically chosen so that
the truncation
at each step (to rank 
$\bf r$ or $\bm \alpha$)
would be considerably smaller than
the time step.
}
\label{fig:2d-tables}
\end{table}

\begin{figure}
\begin{center}
\includegraphics[scale=0.57]{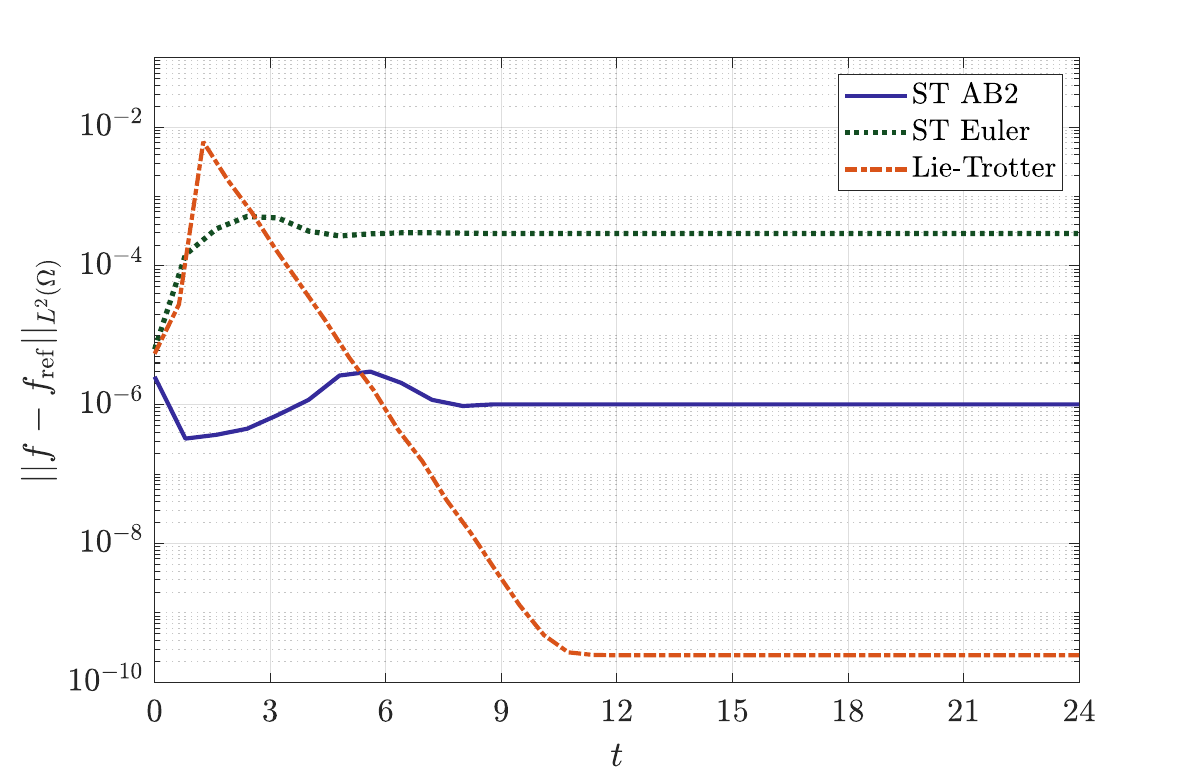}
\end{center}
\caption{Fokker-Planck equation 
\eqref{fp-lorenz-96} in dimension $d=2$ with 
initial condition \eqref{ic2d}. $L^2(\Omega)$ 
error of rank-adaptive Euler forward, rank-adaptive 
AB2, and rank-adaptive Lie-Trotter \cite{dektor2020rankadaptive} 
(with normal vector threshold $10^{-4}$) 
solutions with respect to the reference solution.
{
The numerical results
are obtained on a $50\times 50$ spatial grid.}
}
\label{fig:error-compare-2d}
\end{figure}
\begin{figure}
\centerline{
\includegraphics[scale=0.57]{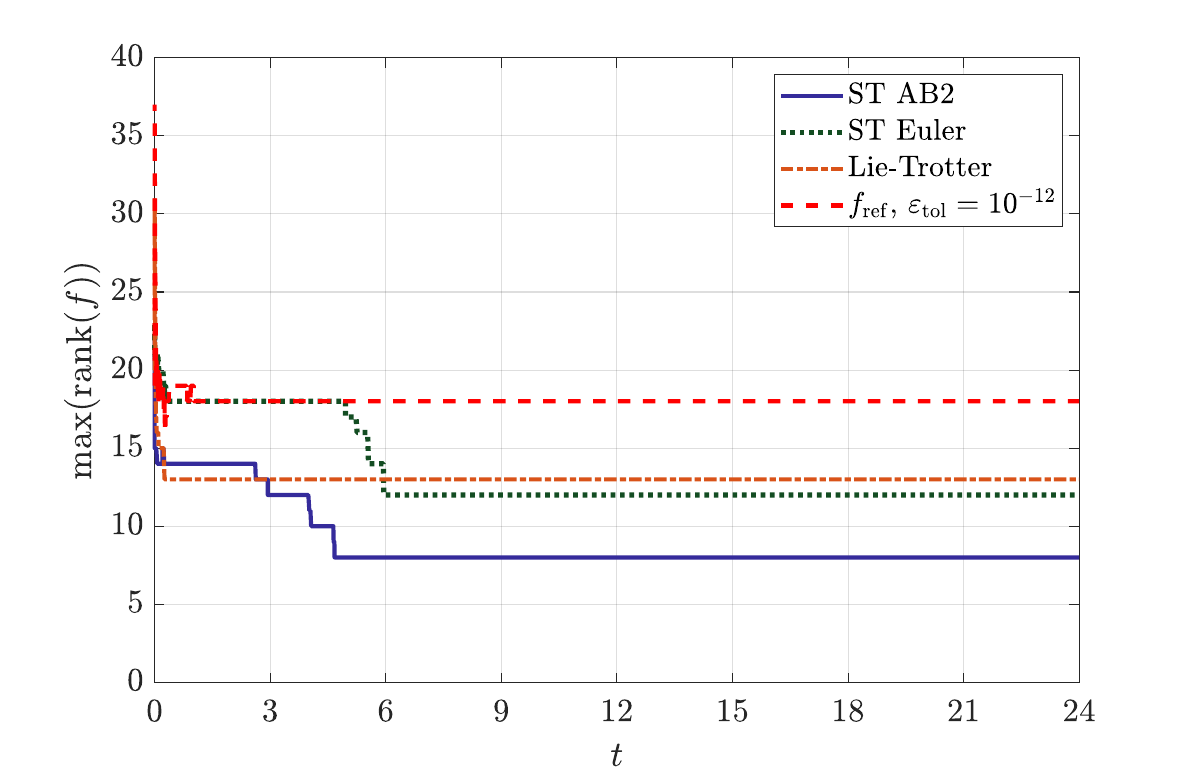}
}
\caption{
Fokker-Planck equation 
\eqref{fp-lorenz-96} in dimension $d=2$ with 
initial condition \eqref{ic2d}.  
Rank versus time for rank-adaptive step-truncation Euler forward, 
AB2, rank-adaptive Lie-Trotter with normal vector threshold $10^{-4}$ \cite{dektor2020rankadaptive}, and reference
numerical solutions.
{
The numerical results
are obtained on a $50\times 50$ spatial grid.
The reference solution 
rank was computed with a singular value tolerance
of $\varepsilon_{\rm tol}^{-12}$.}
}
\label{fig:rank-compare-2d}
\end{figure}

\begin{figure}
\centerline{ 
\includegraphics[scale=0.69]{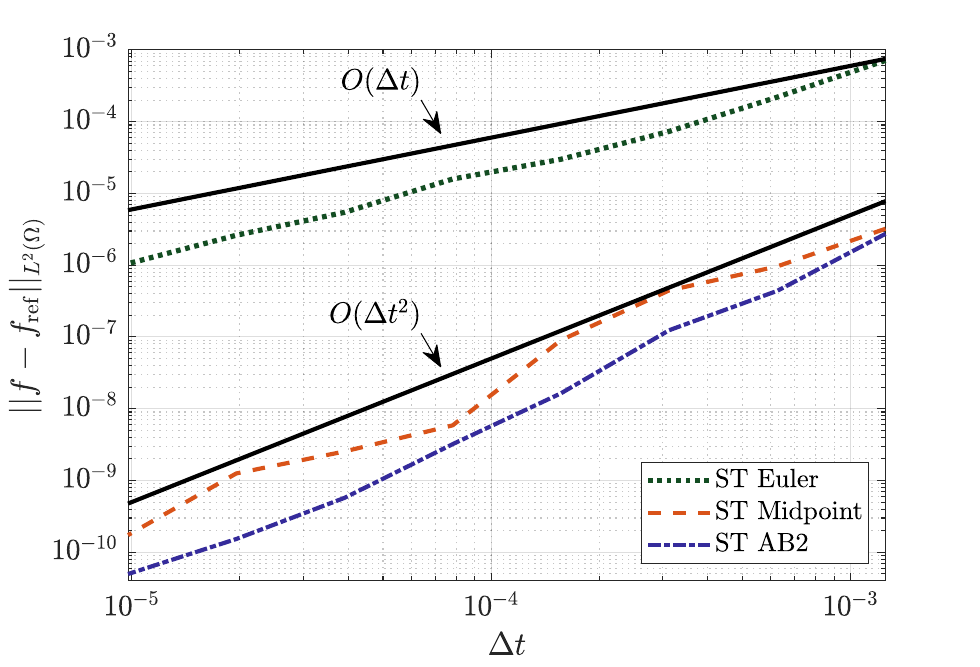}
}
\caption{Fokker-Planck equation 
\eqref{fp-lorenz-96} in dimension $d=2$ with 
initial condition \eqref{ic2d}. $L^2(\Omega)$ error at $T=1$ for 
the rank-adaptive step-truncation methods summarized in Table \ref{fig:2d-tables}.
{
The numerical results
are obtained on a $40\times 40$ spatial grid.}
}
\label{fig:global-err-2d}
\end{figure}

\begin{figure}
\centering
\centerline{\hspace{1.5cm}
\footnotesize 
Adaptive Euler \hspace{2.0cm}
Adaptive AB2    \hspace{2.5cm}
Reference}
\centerline{\line(1,0){420}}
\vspace{0.1cm}
{\rotatebox{90}{\hspace{1.6cm}\rotatebox{-90}{
\hspace{0.1cm}
\footnotesize$t=0$\hspace{0.9cm}}}}
\includegraphics[scale=0.27]{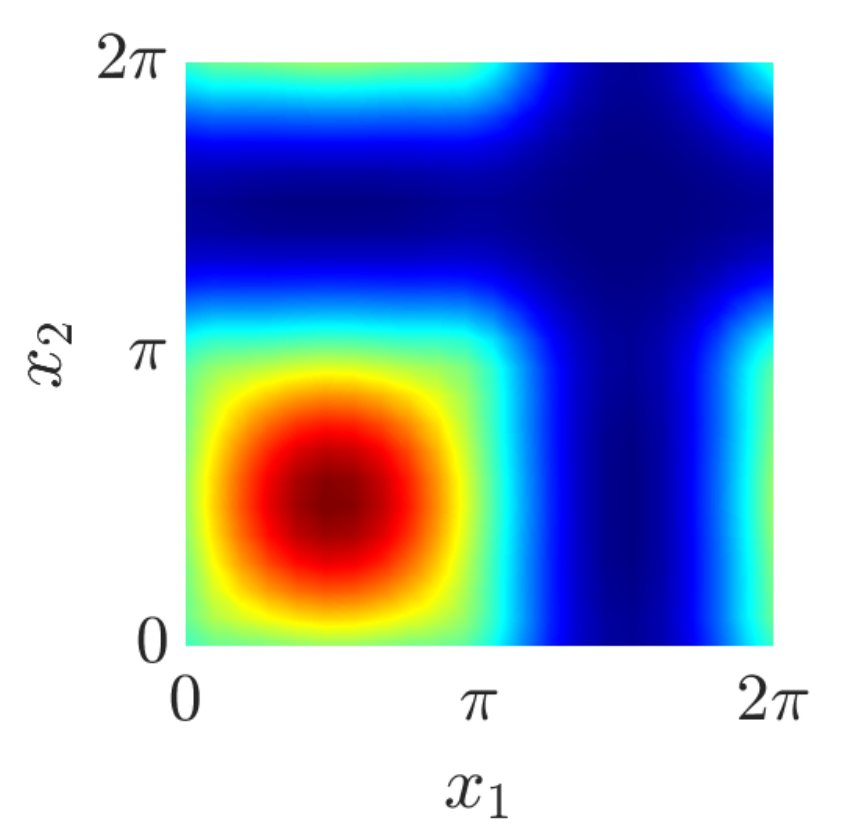}
\includegraphics[scale=0.27]{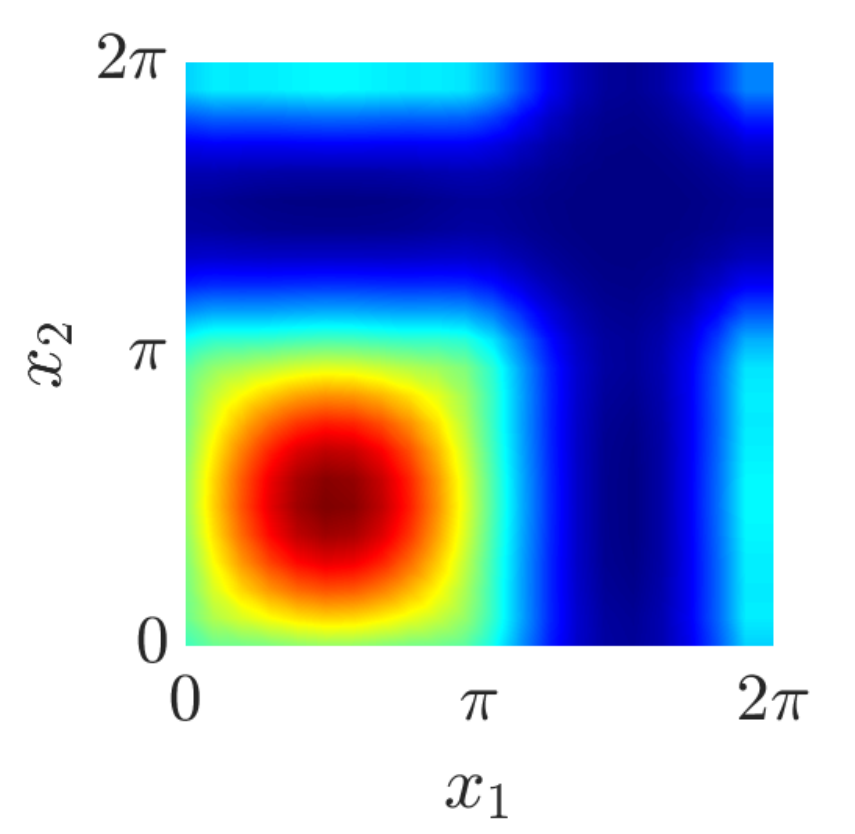}
\includegraphics[scale=0.27]{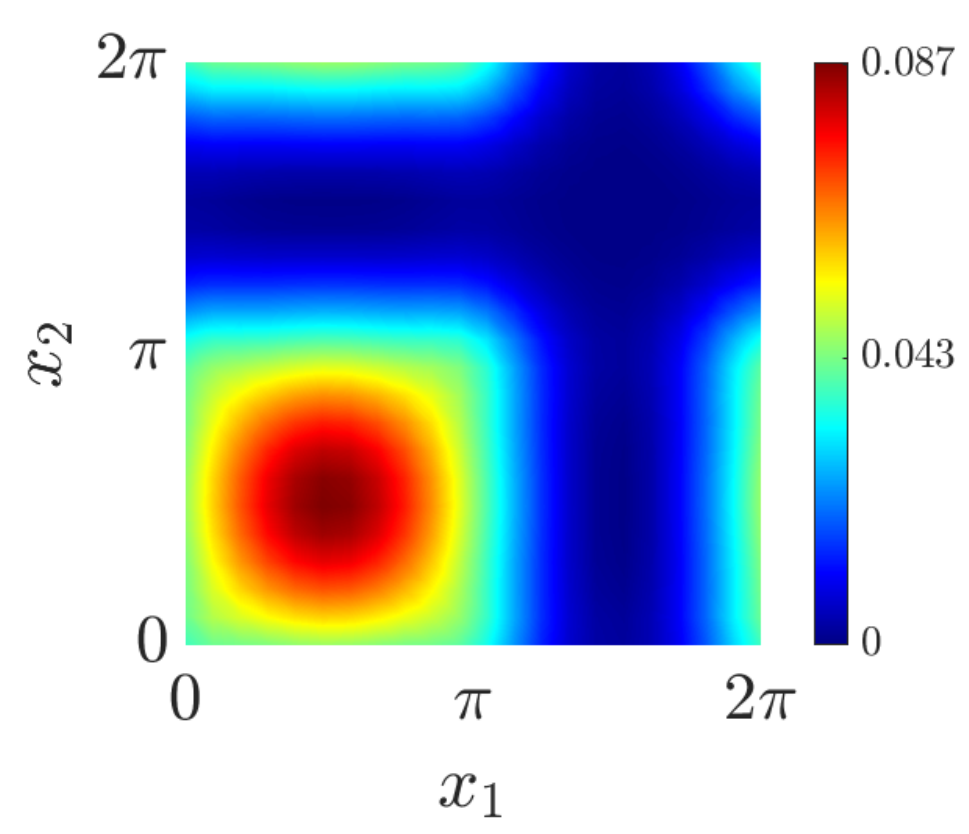}\\
{\rotatebox{90}{\hspace{1.6cm}\rotatebox{-90}{
\footnotesize$t=0.3$\hspace{0.7cm}}}}
\includegraphics[scale=0.27]{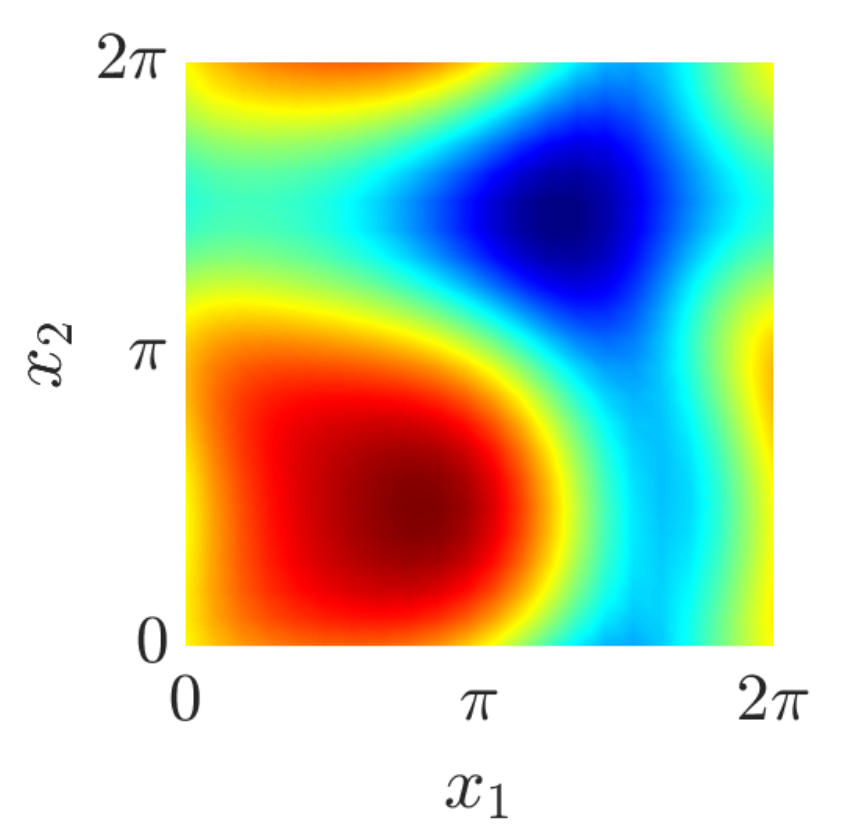}
\includegraphics[scale=0.27]{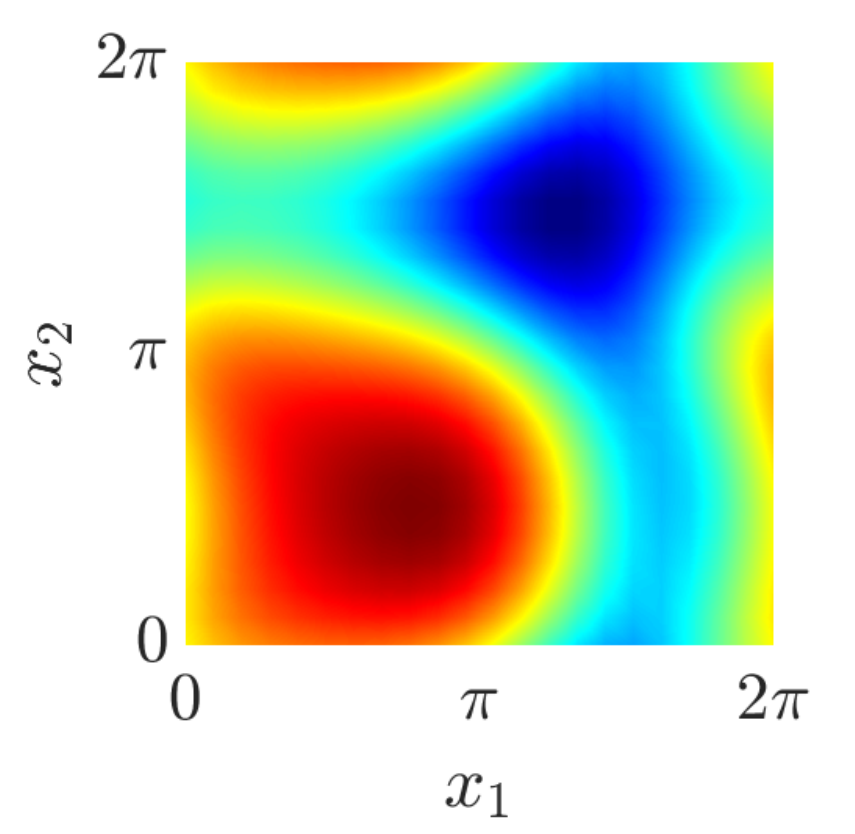}
\includegraphics[scale=0.27]{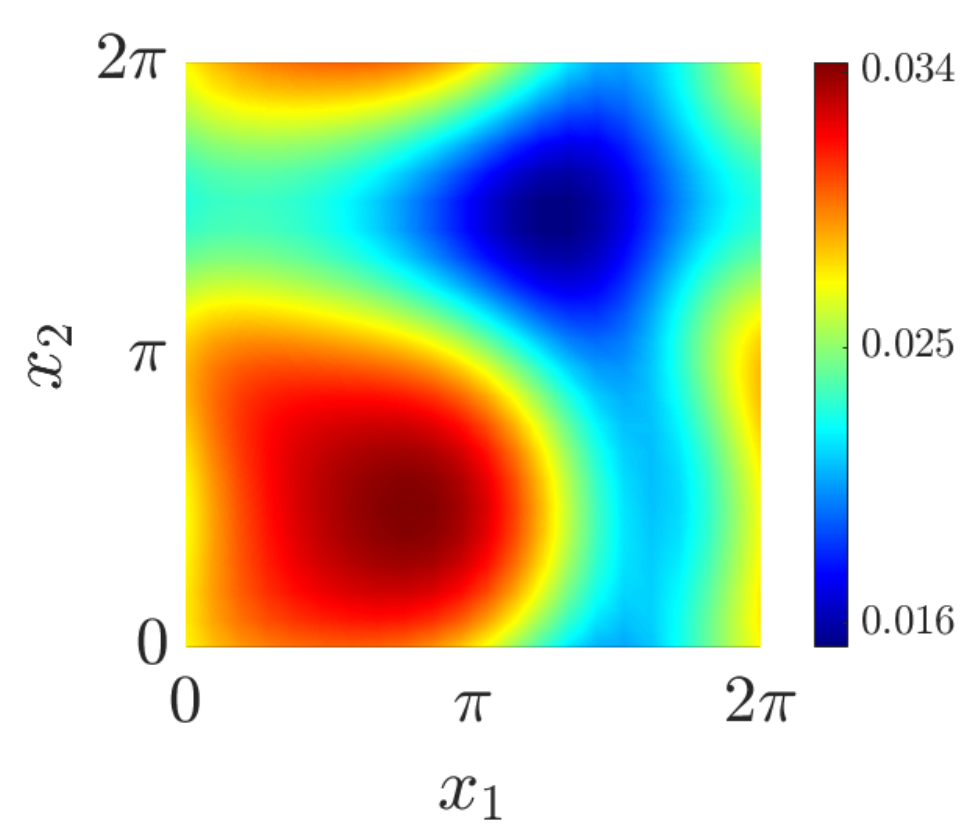}\\
{\rotatebox{90}{\hspace{1.6cm}\rotatebox{-90}{
\footnotesize$t=0.4$\hspace{0.7cm}}}}
\includegraphics[scale=0.27]{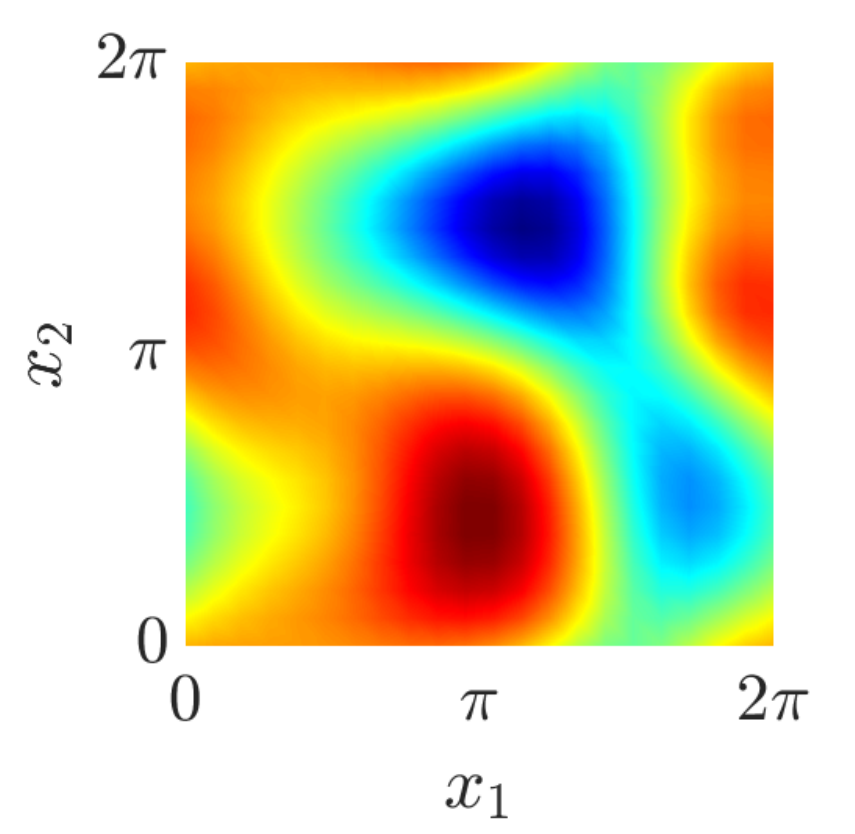}
\includegraphics[scale=0.27]{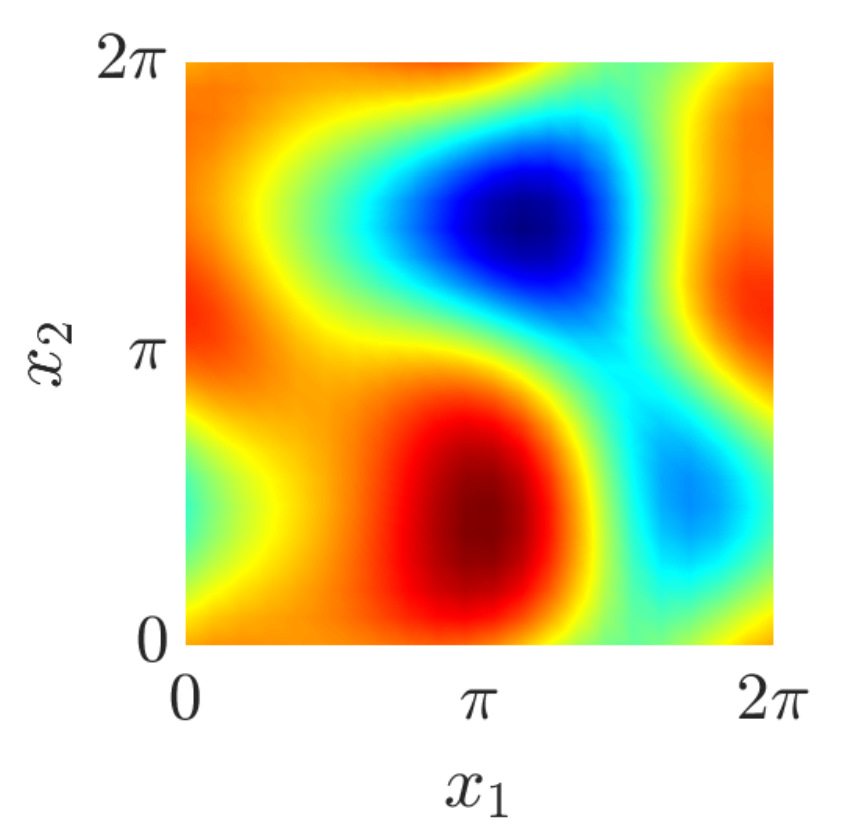}
\includegraphics[scale=0.27]{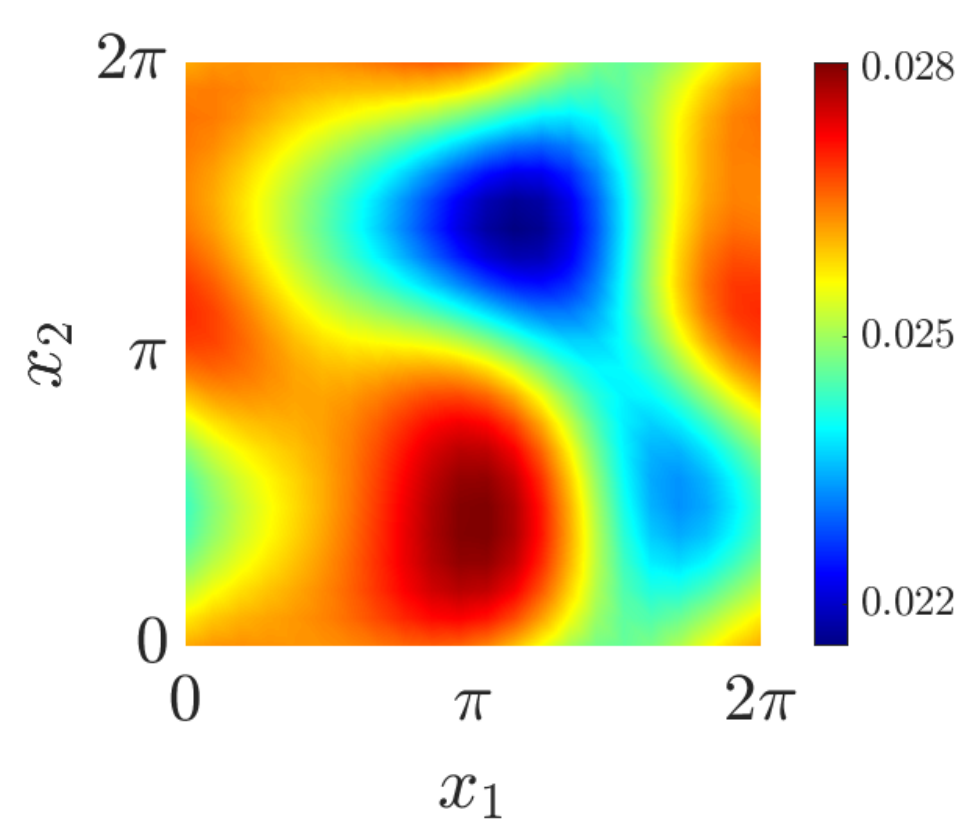}\\
{\rotatebox{90}{\hspace{1.6cm}\rotatebox{-90}{
\footnotesize$t=0.47$\hspace{0.7cm}}}}
\includegraphics[scale=0.27]{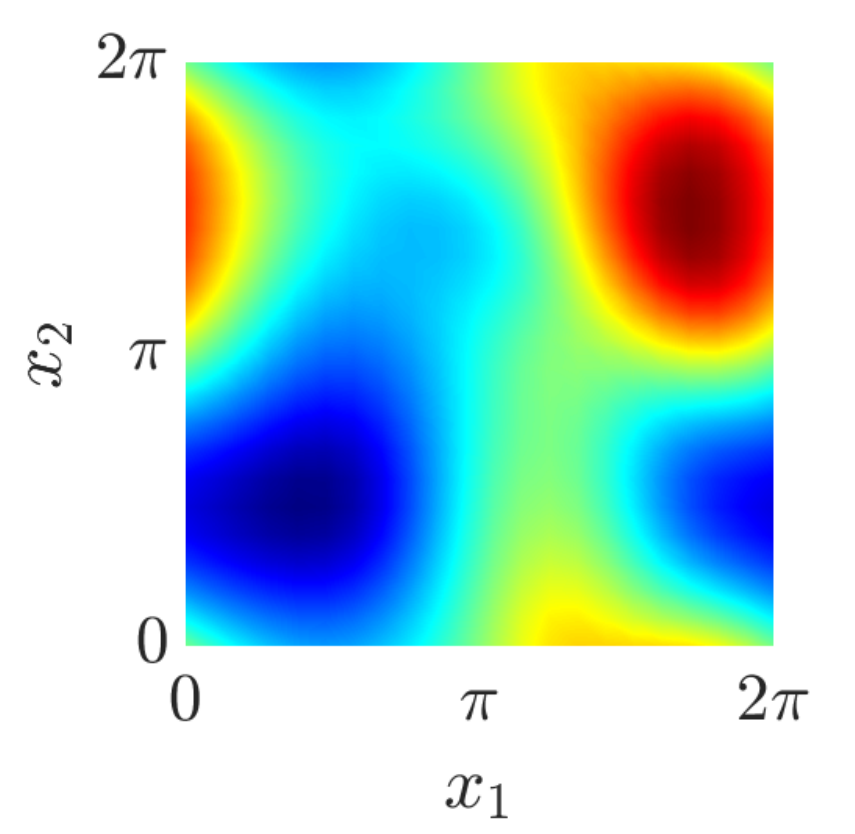}
\includegraphics[scale=0.27]{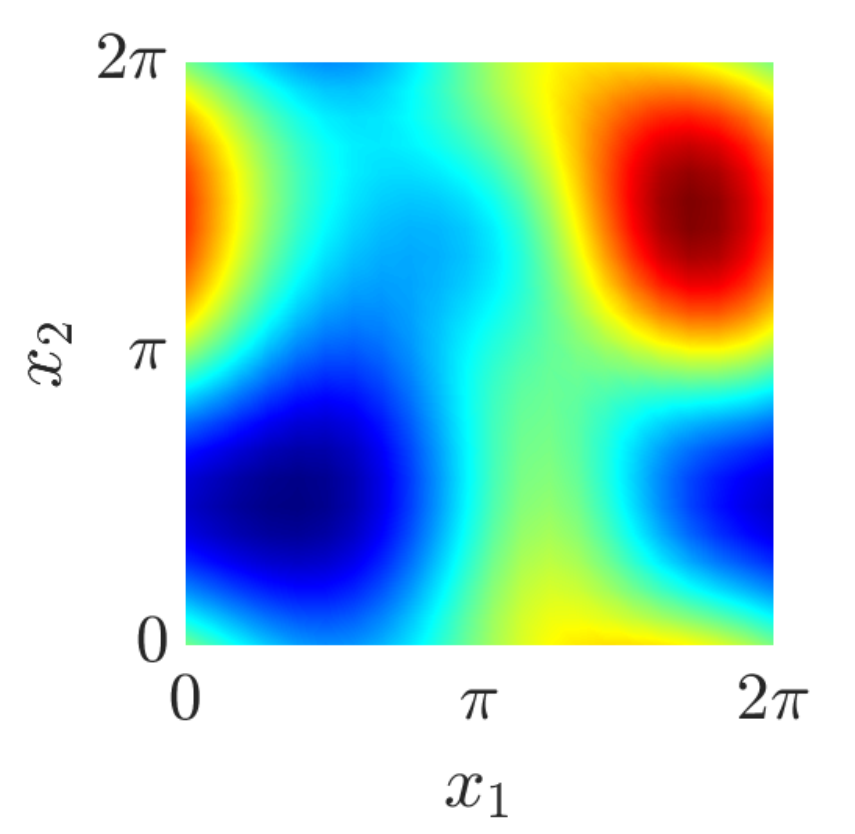}
\includegraphics[scale=0.27]{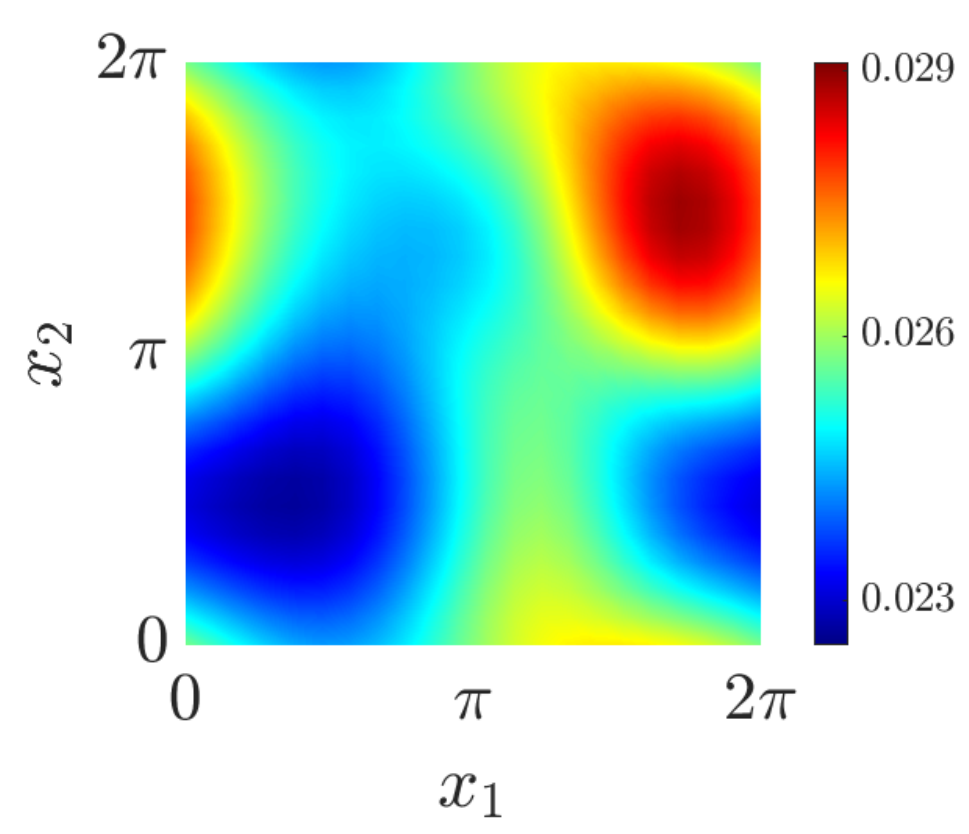}\\
{\rotatebox{90}{\hspace{1.6cm}\rotatebox{-90}{
\footnotesize{Steady State}\hspace{0.1cm}}}}
\includegraphics[scale=0.27]{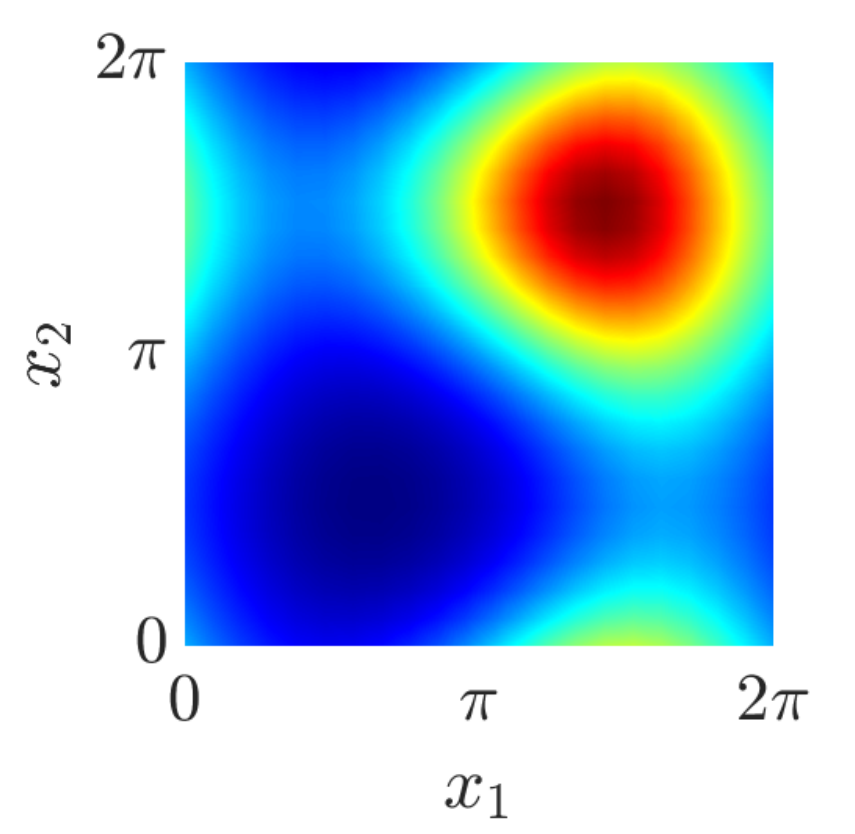}
\includegraphics[scale=0.27]{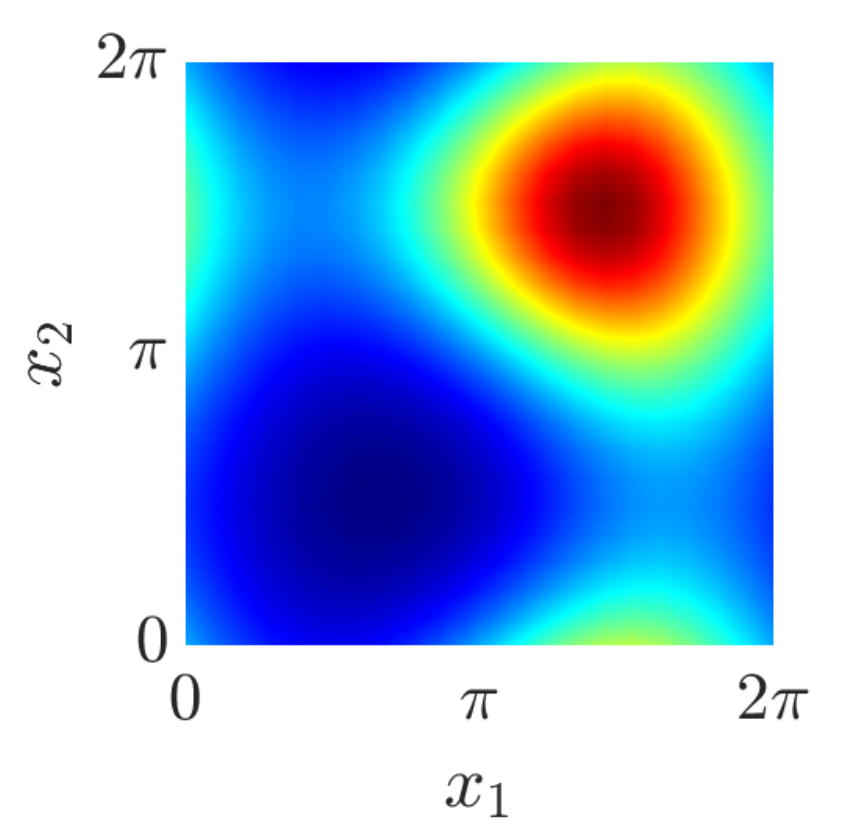}
\includegraphics[scale=0.27]{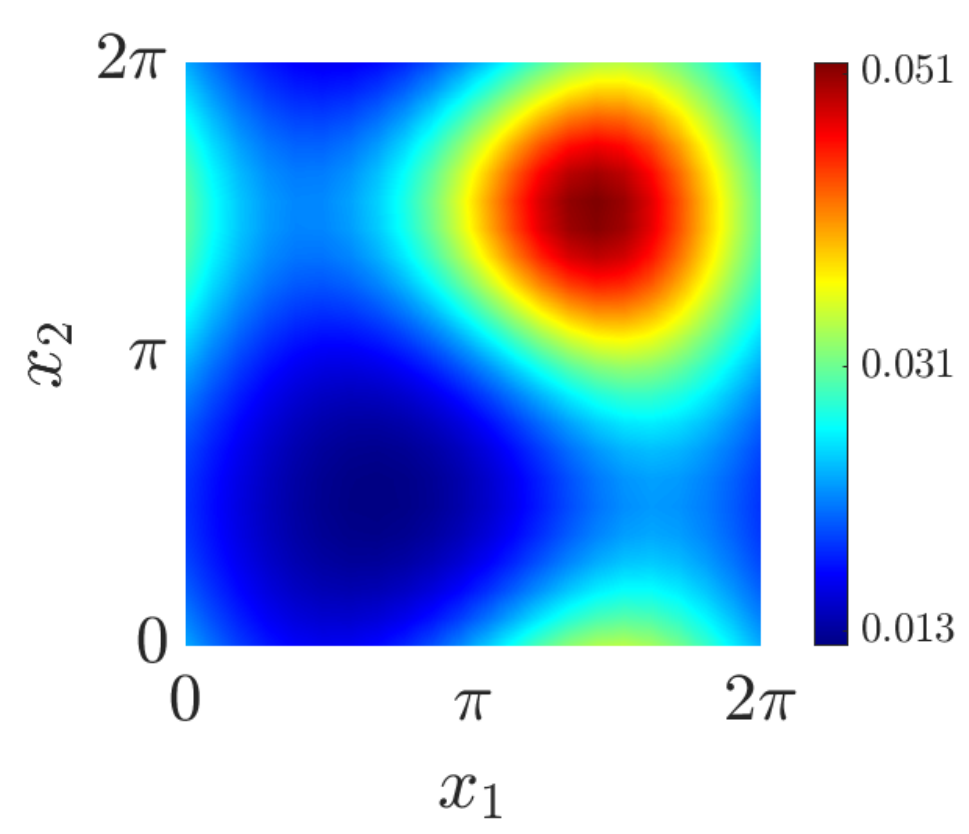}\\
\caption{
Marginal probability density function \eqref{marginal}
obtained by integrating numerically the 
Fokker-Planck equation \eqref{fp-lorenz-96} 
in dimension $d=4$ with
initial condition \eqref{ic4d}
using two methods: 
rank-adaptive Euler forward and rank-adaptive AB2.
The reference solution computed with a variable time step size 
RK4 method with absolute tolerance of $10^{-14}$
computed  on a grid with 
$20^4=160000$ evenly-spaced points.
}
\label{fig:time-snapshot-plot-4d}
\end{figure}

\begin{table}

\begin{centering}
\renewcommand{\arraystretch}{1.3}
\begin{tabular}{ |c|c|c|}
\hline
 Integration Method &
 Free Parameters &
 Dependent Parameters\\
\hline
\begin{tabular}{c}
Adaptive Euler
(HT \& TT Formats)
\\
(Sec. \ref{subsec:adaptive-euler})
\end{tabular}
&
\begin{tabular}{c}
$\Delta t= 10^{-3},$ \\
$M_1=M_2=10^2$
\end{tabular}
 &
 \begin{tabular}{rl}
 $\varepsilon_{\bf r}$&$=10^{-4},$\\
$\varepsilon_{\bf s}$&$=10^{-1}$
 \end{tabular}\\
\hline
\begin{tabular}{c}
Adaptive Midpoint (HT Format)\\
(Small Threshold)\\
(Sec. \ref{subsec:adaptive-midpoint}) 
\end{tabular}&
\begin{tabular}{c}
$\Delta t=10^{-3},$\\
$A=B=10^3,$\\
$G=10^2$
\end{tabular}
&
\begin{tabular}{rl}
$\varepsilon_{{\bm\alpha}}$&$=
10^{-6},$\\
$\varepsilon_{{\bm\beta}}$&$=
10^{-3},$\\
$\varepsilon_{{\bm\gamma}}$&$=
10^{-1}$
\end{tabular}
\\
\hline
\begin{tabular}{c}
Two-step rank-adaptive
Adams-Bashforth (HT Format)\\
(Small Threshold)\\
(Sec. \ref{subsec:adaptive-adams})
\end{tabular}
&

\begin{tabular}{c}
$\Delta t= 10^{-3},$\\
$A=B=10^3,$\\
$G_0=G_1=10^2$
\end{tabular}
&
\begin{tabular}{rl}
$\varepsilon_{{\bm\alpha}}$&$=
10^{-6},$\\
$\varepsilon_{{\bm\beta}}$&$=
10^{-3},$\\
$\varepsilon_{{\bm\gamma}(0)}$&$=
10^{-4},$\\
$\varepsilon_{{\bm\gamma}(1)}$&$=
10^{-4}$
\end{tabular}
\\
\hline
\begin{tabular}{c}
Two-step rank-adaptive
Adams-Bashforth (HT Format)\\
(Large Threshold)\\
(Sec. \ref{subsec:adaptive-adams})
\end{tabular}
&
\begin{tabular}{c}
$\Delta t= 10^{-3},$\\
$A=B=4\times 10^4,$\\
$G_0=G_1=4\times 10^2$
\end{tabular}
&
\begin{tabular}{rl}
$\varepsilon_{{\bm\alpha}}$&$=
4\times 10^{-5},$\\
$\varepsilon_{{\bm\beta}}$&$=
4\times 10^{-2},$\\
$\varepsilon_{{\bm\gamma}(0)}$&$=
4\times10^{-3},$\\
$\varepsilon_{{\bm\gamma}(1)}$&$=
4\times10^{-3}$
\end{tabular}
\\
\hline
\begin{tabular}{c}
Adaptive Midpoint (HT Format) \\
(Large Threshold)\\
(Sec. \ref{subsec:adaptive-midpoint}) 
\end{tabular}&
\begin{tabular}{c}
$\Delta t=10^{-3},$\\
$A=B=5\times 10^4,$\\
$G=5\times 10^3$
\end{tabular}
&
\begin{tabular}{rl}
$\varepsilon_{{\bm\alpha}}$&$=
5\times 10^{-5},$\\
$\varepsilon_{{\bm\beta}}$&$=
5\times 10^{-2},$\\
$\varepsilon_{{\bm\gamma}}$&$=
5$
\end{tabular}
\\
\hline
\end{tabular}\\
\renewcommand{\arraystretch}{1.0}
\end{centering}
\caption{Table of parameters for the rank-adaptive 
step-truncation integrators of the 
Fokker-Planck equation \eqref{fp-lorenz-96} 
in dimension $d=4$ with
initial condition \eqref{ic4d}.
These were heuristically chosen so that
the truncation
at each step (to rank 
$\bf r$ or $\bm \alpha$)
would be considerably smaller than
the time step. The first step of AB2 uses midpoint
with the coefficients listed above.
}
\label{fig:4d-tables}
\end{table}

\begin{figure}
\centerline{
\includegraphics[scale=0.57]{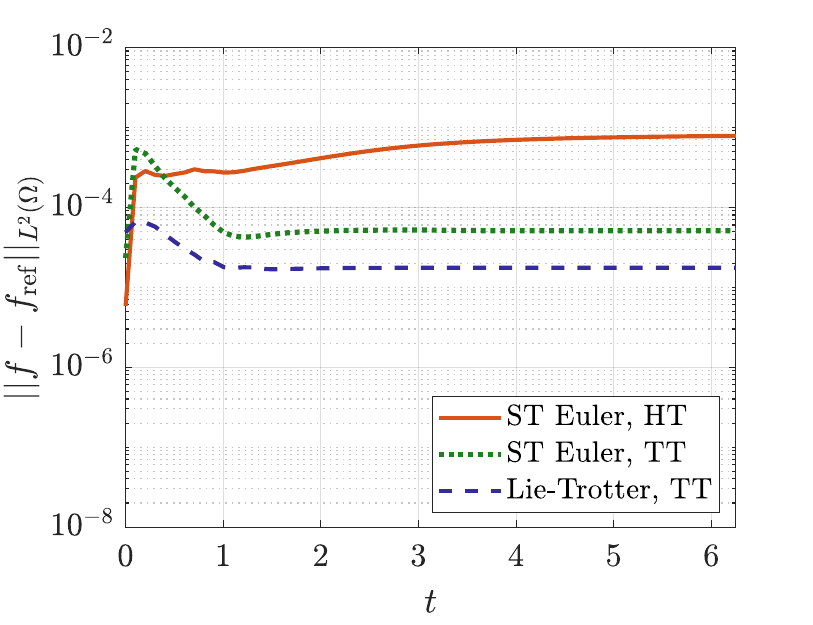}
\includegraphics[scale=0.57]{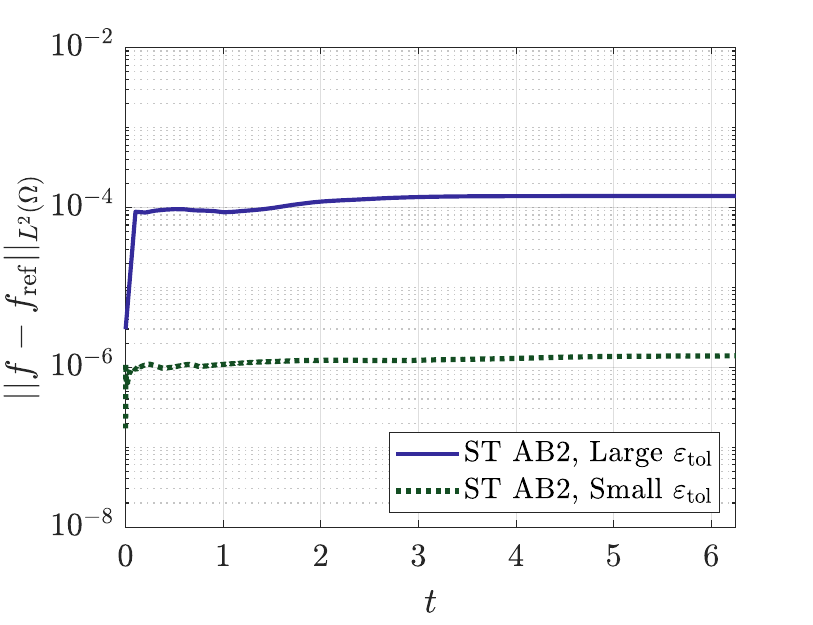}
}
\caption{$L^2(\Omega)$ error of 
numerical solutions to the Fokker-Planck equation \eqref{fp-lorenz-96} 
in dimension $d=4$ with
initial condition \eqref{ic4d}.
The parameters we used for all 
rank-adaptive step-truncation methods are summarized in Table 
\ref{fig:4d-tables}. The rank-adaptive Lie-Trotter method 
uses a threshold of $10^{-2}$ for the PDE component normal to the tensor manifold (see \cite{dektor2020rankadaptive}).
}
\label{fig:error-compare-4d}
\end{figure}
\begin{figure}[]
\centerline{
\includegraphics[scale=0.57]{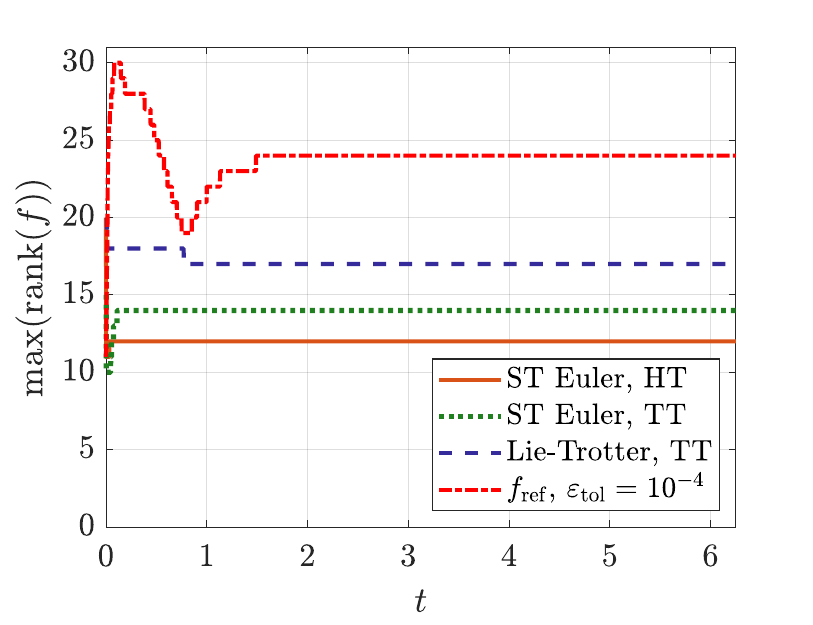}
\includegraphics[scale=0.57]{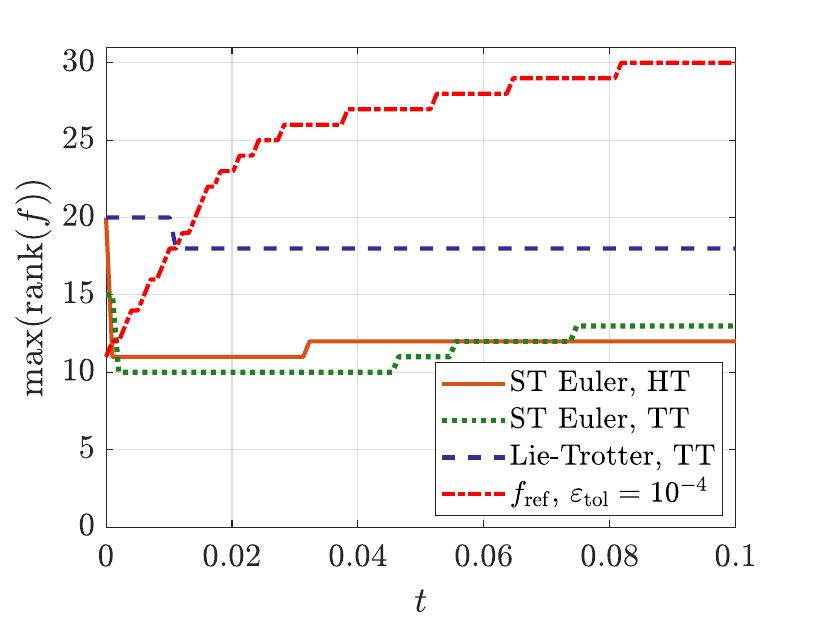}
}
\centerline{
\includegraphics[scale=0.57]{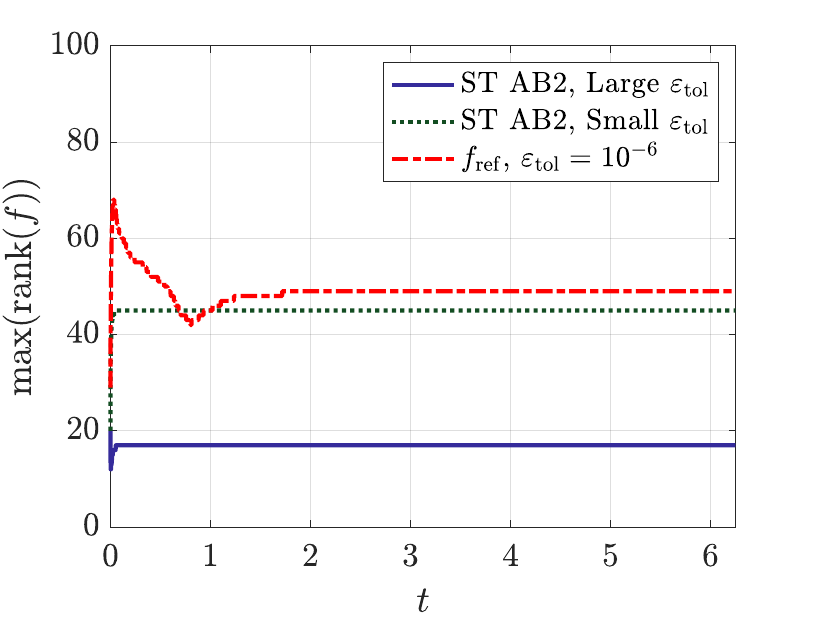}
\includegraphics[scale=0.57]{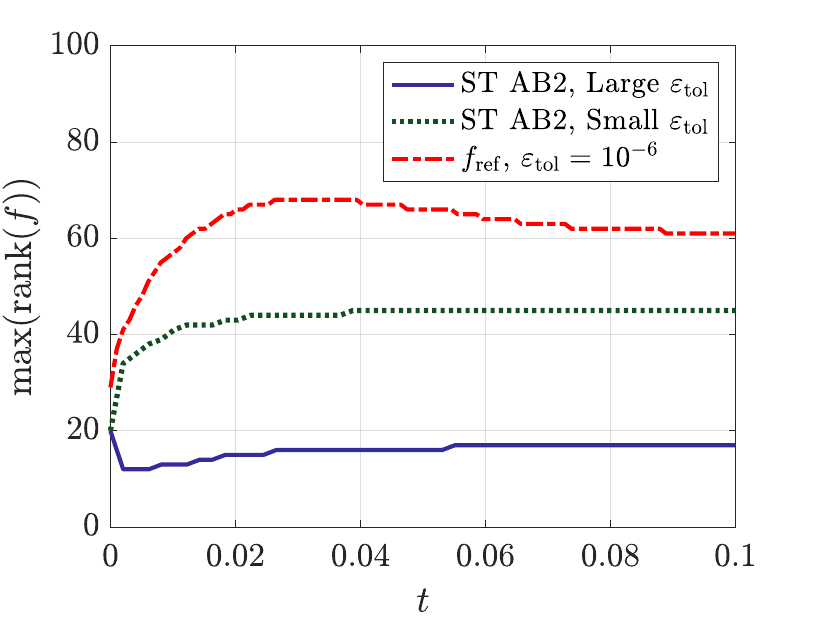}
}
\centerline{
\hspace{0.08cm}
\includegraphics[scale=0.57]{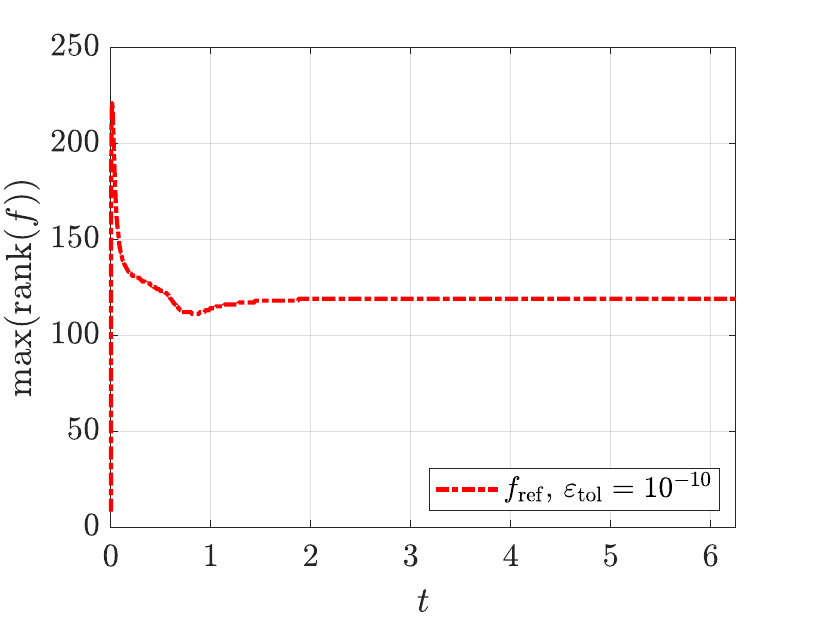}
\includegraphics[scale=0.57]{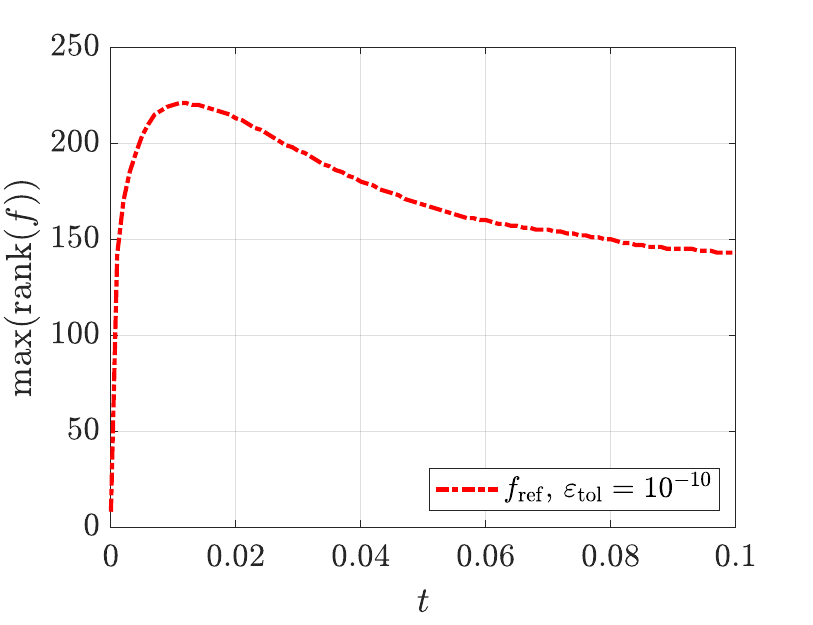}
}
\caption{ 
Rank versus time for the numerical solutions
of Fokker-Planck equation \eqref{fp-lorenz-96} 
in dimension $d=4$ with
initial condition \eqref{ic4d}
{
(left column: $0\leq t\leq 6.25$,
right column: $0\leq t\leq 0.1$).}
{
We truncate the reference
solution to $\varepsilon_{\rm tol}$ in HT format.}
%
%
The rank-adaptive Lie-Trotter method 
uses a threshold of $10^{-2}$ for the PDE component normal to the tensor manifold (see \cite{dektor2020rankadaptive}).
}
\label{fig:rank-compare-4d}
\end{figure}

\begin{figure}
\centerline{
\includegraphics[scale=0.6]{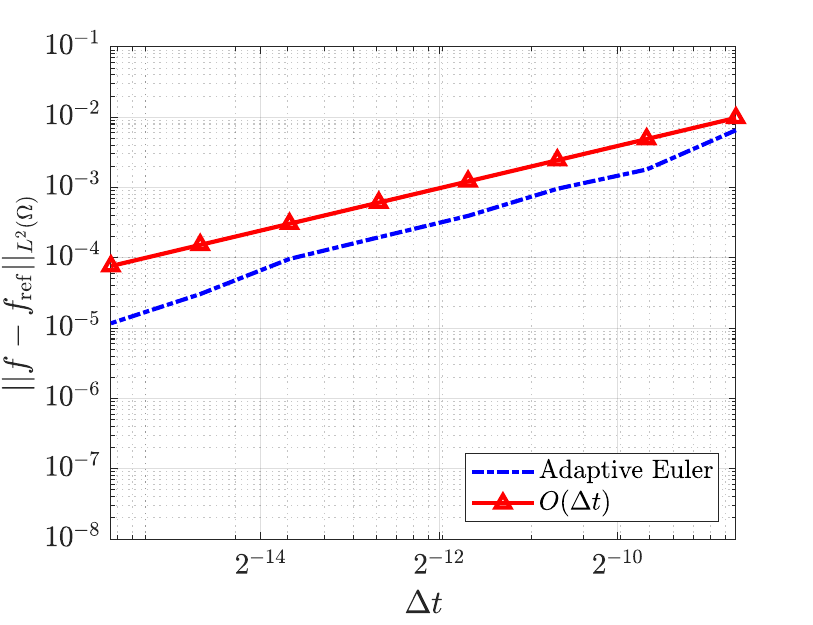}
\includegraphics[scale=0.6]{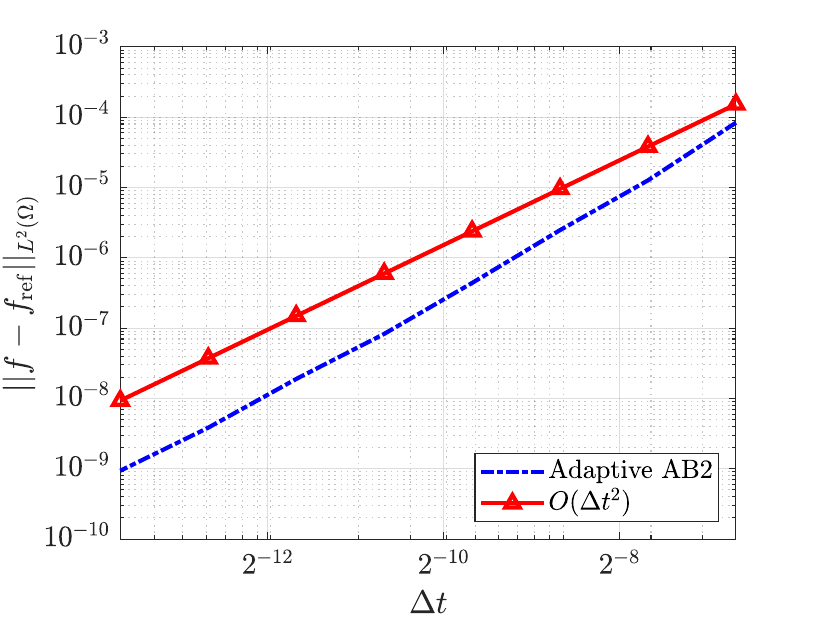}
}

\caption{Fokker-Planck equation \eqref{fp-lorenz-96} 
in dimension $d=4$ with
initial condition \eqref{ic4d}. $L^2(\Omega)$ errors at $T=0.1$ versus $\Delta t$ for different rank-adaptive step-truncation methods.
All tests used the HTucker tensor format.
}
\label{full-dynam-4d-err}
\end{figure}
%
%

\subsection{Two-dimensional Fokker-Planck equation}
\label{subsec:numerics-2d}

Set $d=2$ in \eqref{fp-lorenz-96} and consider the initial condition 
\begin{equation}
f_0(x_1,x_2)=
\frac{1}{m_0}
\left[ 
e^{\sin(x_1-x_2)^2} + \sin(x_1+x_2)^2
\right],
\label{ic2d}
\end{equation}
where  $m_0$ is a normalization factor. Discretize 
\eqref{ic2d} on a two-dimensional grid of evenly-spaced 
points
and then truncate the initial tensor 
(matrix) within machine accuracy into HT format.
Also, set $\gamma(x) = \sin(x)$, $\xi(x) = \cos(x)$,
and $\phi(x)=\exp(\sin(x)) + 1$ for the drift 
functions in \eqref{fp-lorenz-96}.
In Figure \ref{fig:time-snapshot-plot}, we plot 
the numerical solution of the Fokker-Planck equation 
\eqref{fp-lorenz-96} in dimension $d=2$ corresponding 
to the initial condition \eqref{ic2d}. We computed our
solutions with {
four different methods:
\begin{enumerate}
\item Rank-adaptive explicit Euler \eqref{adaptive-euler-method};
\item Two-step rank-adaptive Adams-Bashforth (AB) 
method \eqref{adaptive-ab};
\item Rank-adaptive tensor method with Lie-Trotter 
operator splitting integrator \cite{dektor2020rankadaptive};
\item RK4 method applied to the ODE \eqref{mol-ode} corresponding 
to a full tensor product discretization of \eqref{fp-lorenz-96}. 
We denote this reference solution as $f_{\text{ref}}$. 
\end{enumerate}}
The parameters we used for the rank-adaptive step-truncation 
methods 1. and 2. are summarized in Table \ref{fig:2d-tables}.  
The steady state was determined for this 
computation by halting execution when
$\left\|\partial f_{\text{ref}}/\partial t\right\|_{L^2(\Omega)}$ was 
below the numerical threshold $10^{-13}$. This
occurs at approximately $t \approx 24$ for the 
initial condition \eqref{ic2d}.
The numerical results in Figure \ref{fig:time-snapshot-plot} shows 
that the step-truncation methods listed above match 
all visual behavior of the reference solution.
Observing Figures \ref{fig:error-compare-2d} and 
\ref{fig:rank-compare-2d}, we note that while 
the rank-adaptive AB2 methods nearly doubles
the digits of accuracy (in the $L^2(\Omega)$ norm),
only a modest increase in rank is required to achieve 
this gain in accuracy.  
This is because the rank in each adaptive step-truncation scheme 
is determined by the increment function $\bf \Phi$ 
(which defines the scheme), the nonlinear 
operator $\bf N$, and the truncation error 
threshold (which depends on $\Delta t$).
More precisely, the closer ${\bf \Phi}({\bf N},{\bf f},\Delta t)$
is to the tangent space of the 
manifold ${\cal H}_{\bf r}$ at ${\bf f}_k$, 
the less the rank will increase in next time step. 
In our demonstration, this occurs as the solution 
${\bf f}_k$ approaches steady state, since, 
as the rate at which the probability density evolves 
in time slows down, the quantity 
$\left\|{\bf \Phi}({\bf N},{\bf f},\Delta t)\right\|_2$
tends to zero. 
Consequently, $\left\|({\bf I} - {{\cal P}_{\bf g}}) 
{\bf \Phi}({\bf N},{\bf f},\Delta t)\right\|_2$ 
will also tend towards zero since ${\bf I} - {{\cal P}_{\bf g}}$
is a bounded linear operator. 
For fixed $\Delta t$,
this means that the rank increase conditions
\eqref{eq:68}-\eqref{eq:70} 
will have a smaller likelihood of being triggered.
As we shrink $\Delta t$, the truncation error 
requirements for consistency 
\eqref{eq:68}-\eqref{eq:70} become more demanding, 
and thus a higher solution rank is expected.
{
In Figures \ref{fig:error-compare-2d} and 
\ref{fig:rank-compare-2d} we also see that the rank-adaptive
tensor method with Lie-Trotter 
integrator proposed in \cite{dektor2020rankadaptive} 
performs better on this problem than 
rank-adaptive step-truncation methods, especially 
when the solution approaches the steady state. 
However, it should be noted that the rank-adaptive 
method with operator splitting and normal 
vector control is considerably more involved to implement
than the step-truncation methods, which
are essentially slight modifications
of a standard single-step or multi-step
method.
}
In Figure \ref{fig:global-err-2d} we demonstrate numerically
the global error bound we proved in Theorem \ref{thm:st-global-error}. 
The error scaling constant $Q$ turns out
to be $Q=2$ for rank-adaptive AB2, $Q=5$ for rank-adaptive midpoint, 
and $Q=0.6$ for rank-adaptive Euler forward.

\subsection{Four-dimensional Fokker-Planck equation}
\label{subsec:numerics-4d}

Next, we present numerical results for the 
Fokker-Planck equation \eqref{fp-lorenz-96} 
in dimension $d=4$. In this case, the best truncation 
operator \eqref{ht-best} is not explicitly known. 
Instead, we use the step-truncation method \eqref{one-step-trunc-svd_adapt_rank},
with truncation operator ${\mathfrak T}_r^{\text{SVD}}$ 
defined in \eqref{ht-SVD} (see \cite{grasedyck2010hierarchical,kressner2014algorithm} for 
more details). We set the initial condition as 
\begin{equation}
 f_0(x_1,x_2,x_3,x_4) = \frac{1}{m_0}\sum_{j=1}^L
 \left ( \ \prod_{i=1}^4
                    \frac{\sin((2j-1)x_i)+1}{2^{2(j-1)}} +
                    \prod_{i=1}^4
                    \frac{\exp(\cos(2jx_i))}{2^{2j-1}}
                    \right ),
                    \label{ic4d}
\end{equation}
where $m_0$ is a normalization constant.
Clearly, \eqref{ic4d} can be represented exactly in 
a hierarchical Tucker tensor format provided we use 
an overall maximal tree rank of $r_{0}=2L$. 
For our numerical simulations we choose $L=10$.
We change the drift functions slightly 
from the two-dimensional example we discussed in the previous 
section. Specifically, here we set 
$\gamma(x) = \sin(x)$, $\xi(x)=\exp(\sin(x)) + 1$,
and  $\phi(x) = \cos(x)$
and repeat all numerical tests 
presented in section \ref{subsec:numerics-2d}, i.e., we 
run three rank-adaptive step-truncation simulations with
different increment functions: one based on Euler
forward \eqref{adaptive-euler-method} and one based 
AB2 \eqref{adaptive-ab}. The parameters we used for these 
methods are summarized in Table \ref{fig:4d-tables}.

For spatial discretization, we use the Fourier pseudo-spectral method
with $20^4 = 160000$ points.
We emphasize that a matrix representing the discretized Fokker-
Planck operator at the right hand side of \eqref{fp-lorenz-96}
would be very sparse and require approximately 205 gigabytes
in double precision floating point format.
The solution vector requires 1.28 megabytes of memory (160000
floating point numbers in double precision).
The HTucker format reduces these memory costs considerably.
The large threshold solution of Figure \ref{fig:rank-compare-4d}
is only 25 kilobytes when stored to disk using
the HTucker Matlab software package \cite{kressner2014algorithm}.
The spatial differential operator for the Fokker-Planck equation
can also be represented in HTucker format,
and costs only 21 kilobytes. The storage savings are massive, 
so long as the rank is kept
low. 
In Figure \ref{fig:time-snapshot-plot-4d}, we plot 
a few time snapshots of the marginal PDF 
\begin{equation}
f_{12}(x_1,x_2,t)=\int_{0}^{2\pi}\int_{0}^{2\pi} 
f(x_1,x_2,x_3,x_4,t)dx_3dx_4
\label{marginal}
\end{equation}
we obtained by integrating \eqref{fp-lorenz-96} in time 
with rank-adaptive Euler forward and rank-adaptive AB2.
In Figure \ref{fig:rank-compare-4d} we plot the solution
rank versus time for all rank-adaptive step-truncation 
integrators summarized in Table \ref{fig:4d-tables}.
The results largely reflect those of the two 
dimensional domain. However, a notable difference is 
the abrupt change in rank. This is because the density 
function in this case relaxes to steady state fairly
quickly.
{
Numerically, the steady state is determined 
by halting execution 
when $\left\|\partial f_{\text{ref}}/\partial t\right\|_2$ 
is below the numerical threshold $10^{-8}$. This
happens at approximately $t \approx 6.25$ for the 
initial condition \eqref{ic4d}.}
As the rate of 
change in the density function becomes very small, 
we see that the rank no longer changes. 
This happens near time $t=0.1$ (see Figure
\ref{fig:rank-compare-4d}).

The proposed rank-adaptive step-truncation methods
can provide solutions with varying accuracy 
depending the threshold, i.e., the parameters 
summarized in Table \ref{fig:4d-tables}. 
To show this, in Figure 
\ref{fig:rank-compare-4d} we 
compare the rank dynamics in the adaptive 
AB2 simulations obtained with small or large 
thresholds.
Note that the solution computed with a 
large error threshold is rather low rank 
(see Figure \ref{fig:rank-compare-4d}).
We also see that the rank can be kept near the 
rank of the initial condition, if desired 
(again see Figure \ref{fig:rank-compare-4d}).
Finally, in Figure \ref{full-dynam-4d-err}
we plot the error $L^2(\Omega)$ error 
at $T=0.1$ versus $\Delta t$ 
for two different rank-adaptive step-truncation 
methods, i.e., Euler and AB2. It is that the order 
of AB2 is slightly larger than $2$.
This can be explained by noting that the 
error due to rank truncation is essentially a sum
of singular values. Such singular values 
can be smaller than the truncation thresholds
${\varepsilon}_{\bm \kappa}$ ($\bm \kappa=\bf r, \bf s, \bm \alpha$, ...), suggesting  the theoretical bounds may not be tight.

\appendix
\section{Proof of Lemma \ref{trunc-smooth-thm}}
\label{sec:appendix-bst-analysis}
 
In this section, we 
present a proof of Lemma \ref{trunc-smooth-thm} which
is specific to ${\cal H}_{\bf r}$. First, we start
by constructing an open set centered about a point
with known rank.
\begin{lemma}
\label{add-open-lemma}
Let ${\bf f}\in {\cal H}_{\bf r}$ be a point on the hierarchical
Tucker manifold of constant rank. Let
${\bf v}\in T_{\bf f}{\cal H}_{\bf r}$ be an arbitrary vector in 
the tangent plane of ${\cal H}_{\bf r}$ at $\bf f$.
Then there exists $\eta >0$ such that for all
$\varepsilon$ satisfying $0\leq\varepsilon
\leq\eta$, we have ${\bf f}+ \varepsilon{\bf v} =
{\bf g} \in{\cal H}_{\bf r}$.
As a consequence, if $U_{\bf f}\subseteq T_{\bf f}{\cal H}_{\bf r}$
is a closed and bounded set containing the origin,
then there exists an open subset $V_{\bf f}\subseteq U_{\bf f}$
such that ${\bf f} + {\bf h}\in {\cal H}_{\bf r}$, for all ${\bf h}\in 
V_{\bf f}$ .
\end{lemma}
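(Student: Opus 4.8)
The plan is to argue directly from the explicit parametrization of hierarchical Tucker tensors rather than from abstract differential geometry. Recall (see \cite{uschmajew2013geometry,Holtz_2012}) that $\mathcal{H}_{\bf r}$ is the image of a smooth map $\tau : \mathcal{P} \to \mathbb{R}^{n_1 \times \cdots \times n_d}$, where the parameter domain $\mathcal{P}$ consists of full column-rank frames at the leaves and transfer tensors at the interior nodes of the dimension tree, subject to the prescribed ranks $\{r_\alpha\}$; this domain is open because ``full column rank'' is an open condition. For any $\theta$ with $\tau(\theta) = {\bf f}$ the map $\tau$ is a submersion onto $\mathcal{H}_{\bf r}$, and $T_{\bf f}\mathcal{H}_{\bf r} = \operatorname{im} D\tau(\theta)$. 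First I would fix such a $\theta$ and write the given tangent vector as ${\bf v} = D\tau(\theta)[\dot\theta]$ for a parameter direction $\dot\theta$, so that the genuine on-manifold curve $\gamma(\varepsilon) = \tau(\theta + \varepsilon\dot\theta)$ satisfies $\gamma(0) = {\bf f}$, $\gamma'(0) = {\bf v}$, and lies in $\mathcal{H}_{\bf r}$ for all small $\varepsilon$ since $\theta + \varepsilon\dot\theta$ stays in the open set $\mathcal{P}$.

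To establish the first assertion as stated I would try to certify that the \emph{affine} segment ${\bf f} + \varepsilon{\bf v}$ keeps every matricization at rank exactly $r_\alpha$. The lower bound $\operatorname{rank} \ge r_\alpha$ is the routine half: having rank at least $r_\alpha$ is an open condition that holds at $\varepsilon = 0$, so it persists on some $[0,\eta_\alpha]$, and intersecting over the finitely many nodes yields one $\eta > 0$. The content is the upper bound $\operatorname{rank} \le r_\alpha$, and I expect this to be \textbf{the main obstacle}. Writing the $\alpha$-matricization of $\bf f$ as a rank-$r_\alpha$ matrix $M$ with singular blocks and the matricization of $\bf v$ as $Z$, the tangent condition forces the doubly-complementary block of $Z$ to vanish, so a Schur-complement computation gives $\operatorname{rank}(M + \varepsilon Z) = r_\alpha + \operatorname{rank}\!\big(\varepsilon^2\, Z_{21}\Sigma^{-1}Z_{12} + O(\varepsilon^3)\big)$, where $Z_{21},Z_{12}$ are the off-diagonal blocks of $Z$. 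This correction is $O(\varepsilon^2)$ but need not vanish, which is exactly the delicate point: the curvature of $\mathcal{H}_{\bf r}$ means the straight segment and the curve $\gamma$ agree only to first order.

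Consequently the effort must go into controlling this Schur complement. The honest route is to keep the tangent vector realized through $\gamma$ and transfer the conclusion to the affine segment only where the correction provably vanishes (e.g.\ for the ``inner'' core and single-mode frame directions), handling general directions by the curve; I would also check whether the intended use in the proof of Lemma~\ref{trunc-smooth-thm} in fact only requires the curve $\gamma$. Granting the first assertion, the ``as a consequence'' statement would follow by compactness: restrict to the unit sphere of directions inside the closed bounded set $U_{\bf f}$, use the first part to obtain a positive radius $\eta({\bf v})$ for each direction, and pass to a uniform radius to carve out an open subset $V_{\bf f} \subseteq U_{\bf f}$ with ${\bf f} + V_{\bf f} \subseteq \mathcal{H}_{\bf r}$. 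I would scrutinize this final step most carefully, since producing an open tangent set that maps into $\mathcal{H}_{\bf r}$ under ${\bf h} \mapsto {\bf f} + {\bf h}$ rests on exactly the same upper-rank control identified above.
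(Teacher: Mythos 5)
Your diagnosis of the ``main obstacle'' is correct, and it is fatal rather than merely delicate: the first assertion of the lemma is false as stated, so no amount of control on your Schur complement can rescue the affine segment. Take $d=2$, where ${\cal H}_{\bf r}$ with ${\bf r}=(1,1)$ is the manifold of rank-one $2\times 2$ matrices: with ${\bf f}=e_1e_1^{\top}$ and ${\bf v}=e_1e_2^{\top}+e_2e_1^{\top}\in T_{\bf f}{\cal H}_{\bf r}$, one has $\det({\bf f}+\varepsilon{\bf v})=-\varepsilon^{2}$, so ${\bf f}+\varepsilon{\bf v}$ has rank $2$ for every $\varepsilon\neq 0$ --- exactly your $\varepsilon^2 Z_{21}\Sigma^{-1}Z_{12}$ term being nonzero. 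Consequently only your ``routine half'' (rank cannot drop, an open condition) is provable, the per-direction radius $\eta({\bf v})$ needed for your final compactness step does not exist, and the ``as a consequence'' clause fails for the affine map ${\bf h}\mapsto {\bf f}+{\bf h}$ as well. Your proposal is therefore incomplete as a proof of the stated lemma, but correctly so: what your argument does establish is the statement with the segment replaced by the genuinely on-manifold curve $\gamma(\varepsilon)=\tau(\theta+\varepsilon\dot\theta)$, which agrees with the segment only to first order.

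Comparing with the paper's own proof: it contains precisely the gap you isolated. The polynomial argument $p(\eta)=\det\bigl(({\bf A}+\eta{\bf B})^{\top}({\bf A}+\eta{\bf B})\bigr)$ with $p(0)\neq 0$ only shows that the full-column-rank constraints persist under perturbation, i.e.\ that no node rank can \emph{drop}; the upper bound $\mathrm{rank}\leq r_{\alpha}$ is never addressed. Moreover, the device it invokes for ``addition of a point and a tangent'' (Proposition 3 of \cite{da2015optimization}) yields an HT representation of ${\bf f}+\varepsilon{\bf v}$ with node ranks up to $2r_{\alpha}$, consistent with the counterexample above; in effect the paper's argument is valid for the parameter-space curve $\tau(\theta+\varepsilon\dot\theta)$ and silently conflates it with the ambient segment, which is the first-order-versus-second-order confusion you flagged. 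Your closing suggestion is the right repair: the downstream use in Lemma \ref{trunc-smooth-thm} only requires a smooth local parametrization of a neighborhood of ${\bf f}$ in ${\cal H}_{\bf r}$ by an open subset of $T_{\bf f}{\cal H}_{\bf r}$, and that is available from the known manifold structure of ${\cal H}_{\bf r}$ (graph-type charts, or the results of \cite{lewis2008alternating,absil2012projection,marz2012calculus}); it should not be manufactured by adding tangent vectors in the embedding space.
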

\begin{proof}
First, consider a simpler problem, in which we have two 
matrices ${\bf A},{\bf B}\in{\mathbb R}^{n\times m}$,
where $\bf A$ is full column rank. Consider the function
\begin{equation}
p(\eta) = \det\left(({\bf A}+\eta {\bf B})^{T}({\bf A}+\eta {\bf B})\right).
\end{equation}
Clearly, $p(\eta)$ is a polynomial and thus smooth in $\eta$. 
Moreover, $p(0)\neq 0$ since ${\bf A}$ is full column rank. 
Since $p$ is smooth, there exists some $\eta > 0$ such 
that $p(\varepsilon)\neq 0$ for all 
$\varepsilon\in[0,\eta]$. Since the full-rank
hierarchical Tucker manifold is defined via the 
full column rank constraints on an array of matrices 
corresponding to matricizations of the tensor \cite{uschmajew2013geometry}, we can apply 
the principle above to every full column rank 
matrix associated with the tree, using addition of a point
and a tangent as referenced in Proposition 3 of
\cite{da2015optimization}.
We have now proved the part one of the 
lemma where $\eta$ is taken to be the
minimum over the tree nodes. As for existence of an
open set, suppose $U_{\bf f}$ is open and bounded. Now 
we apply the above matrix case to the boundary 
$\partial U_{\bf f}$,
giving us a star shaped set $S_{\bf f}\subseteq U_{\bf f}$.
Letting $V_{\bf f} = S_{\bf f}\setminus \partial S_{\bf f}$
be the interior, completes the proof of the lemma.
\begin{flushright}
\qed
\end{flushright}
\end{proof}
%
%
We use the open set constructed 
above to prove smoothness using the same techniques as
\cite{marz2012calculus}.
\begin{proof}{\em (Lemma \ref{trunc-smooth-thm})}
Let ${\bf f} \in {\cal H}_{\bf r}
\subseteq {\mathbb R}^{n_1\times n_2 \times \cdots \times n_d}$.
By Lemma \ref{add-open-lemma}, there exists 
an open norm-ball $B({\bf f}, \kappa)$ located 
at ${\bf f}$ with radius $\kappa>0$ so that
\begin{equation}
{{\bf f}} + {\bf v}\in  {\cal H}_{\bf r}\qquad \forall {\bf v}\in  {\cal P}_{\bf f}\left ( B({{\bf f}}, \kappa)\right).
\end{equation}
Let ${\cal U}_{\bf f} = {\cal H}_{\bf r} \cap B({\bf f}, \kappa)$
be a set which is open in the topology of ${\cal H}_{\bf r}$.
Also,  let
$({\bf q}_{{\bf f}}, {\bf q}_{{\bf f}}^{-1}({\cal U}_{{\bf f}}))$
be a local parametrization at
${\bf f}$. For the parametrizing coordinates, we take an
open subset
${\bf q}_{{\bf f}}^{-1}({\cal U}_{{\bf f}})=V_{{\bf f}}\subseteq T_{{\bf f}}{\cal H}_{\bf r}$
of the tangent space embedded in
${\mathbb R}^{n_1\times n_2 \times \cdots \times n_d}$.
This means that the parametrization ${\bf q}_{{\bf f}}$ 
takes tangent vectors as inputs and maps 
them into tensors in ${\cal H}_{\bf r}$, i.e.
\begin{equation}
{\bf q}_{{\bf f}} :T_{{\bf f}}{\cal H}_{\bf r}\rightarrow
{\cal H}_{\bf r}.
\end{equation}
Moreover, we assume that the coordinates
are arranged in column major ordering as a vector. 
This allows for the Jacobian
$\partial {{\bf q}}_{{\bf f}} /\partial {\bf v}$
to be a basis for the tangent space $T_{{\bf f}}{\cal H}_{\bf r}$. 
Note that $\partial {\bf q}_{{\bf f}} /\partial {\bf v}$ is 
a $(n_1 n_2 \cdots n_d)\times
\text{dim}(T_{{\bf f}}{\cal H}_{\bf r})$ matrix with 
real coefficients. Now, let ${\bf M}({\bf f})$ be a 
matrix of column vectors spanning the space orthogonal to
$T_{\bf f}{\cal H}_{\bf r}$ in
${\mathbb R}^{n_1\times n_2 \times \cdots \times n_d}$.
Since the two linear spaces are disjoint,
we have a local coordinate map for the ball
$B({\bf f}, \kappa)$, given by
\begin{equation}
{\bf C}({\bf v},{ \bf g}) =
{\bf q}_{{\bf f}}({\bf  v})
+[{\bf M}({\bf q}_{{\bf f}}({\bf v}))]
{\bf g},
\end{equation}
where $\bf v$ is tangent and $\bf g$ is normal (both column vectors). By construction,
\begin{equation}
{\mathfrak T}_{\bf r}^{\text{best}}({\bf C}({\bf v},{ \bf g}) )
= {\bf q}_{{\bf  f}} ({\bf v})
\end{equation}
is smooth in both $\bf v$ and $\bf g$.
Therefore, we can take the total derivative on the embedded
space and apply the chain rule to obtain the Jacobian
of ${\mathfrak T}_{\bf r}^{\text{best}}({\bf f})$. 
Doing so, we have
\begin{align}
\frac{\partial}{\partial(\bf v, \bf g)}
{\mathfrak T}_{\bf r}^{best}({\bf C}({\bf v},{\bf g}))
&=
\frac{\partial{\mathfrak T}_{\bf r}^{\text{best}}}{\partial \bf C}
\frac{\partial \bf C}{\partial(\bf v, \bf g)}
=
\frac{\partial{\mathfrak T}_{\bf r}^{\text{best}}}{\partial \bf C}
\left [
\frac{\partial {\bf q}_{{\bf f}}}{\partial \bf v}
+
\sum_{i=1}^{n^\perp}
\frac{\partial {\bf M}_i({\bf q}_{{\bf f}}({\bf v}))}
{\partial \bf v}{ g}_i
\bigg |
{\bf M}({\bf q}_{{\bf f}}({\bf v}))
\right ],
\end{align}
where the symbol $[\cdot |\cdot ]$ denotes 
column concatenation of matrices, $n^\perp$ is 
the dimension of the normal space
$(T_{\bf f}{\cal H}_{\bf r})^\perp$,  ${\bf M}_i$
is the $i$-{th} column of $\bf M$, and ${g}_i$ 
is the $i$-{th} component of $\bf g$.
We can take ${\bf g} = {\bf 0}$ since the above expression
extends smoothly from the embedding space onto
${\cal H}_{\bf r}$. Hence, the Jacobian 
of $\mathfrak{T}^{\text{best}}$ is the solution to the 
linear equation
\begin{equation}
\frac{\partial{\mathfrak T}_{\bf r}^{\text{best}}}{\partial \bf C}
\left [
\frac{\partial {\bf q}_{{\bf f}}}{\partial \bf v}
\bigg |
 {\bf M}({\bf q}_{{\bf f}}({\bf v}))
\right ]
=
\left [
\frac{\partial {\bf q}_{{\bf f}}}{\partial {\bf v}}
\bigg  |
{\bf 0}
\right ].
\label{A.11}
\end{equation}
Since the right factor of the left hand side has
a pair of orthogonal blocks, we can write the inverse
using the pseudo-inverse of the blocks, i.e., 
\begin{equation}
\left [
\frac{\partial {\bf q}_{{\bf f}}}{\partial \bf v}
\bigg |
 {\bf M}({\bf q}_{{\bf f}}({\bf v}))
\right ]^{-1}
=
\begin{bmatrix}
\left [
\displaystyle \frac{\partial {\bf q}_{{\bf f}}}{\partial \bf v}
\right]^{+} \\ \\
 \left[{\bf M}({\bf q}_{{\bf f}}({\bf v}))\right]^{+}
\end{bmatrix}.
\end{equation}
The right hand side is the block concatenation 
of the rows of each pseudo-inverse. 
Plugging the above expression into \eqref{A.11}, 
we find
\begin{equation}
\frac{\partial{\mathfrak T}_{\bf r}^{\text{best}}}{\partial \bf C}
=
\frac{\partial {\bf q}_{{\bf f}}}{\partial \bf v}
\left [
\frac{\partial {\bf q}_{{\bf f}}}{\partial  \bf v }
\right ]^{+},
\end{equation}
which is exactly the expression for the orthogonal
projection onto the tangent space \cite{lubich2013dynamical}. 
This completes the proof.
\hfill\(\qed\)
\end{proof}

\section{Step-truncation methods for matrix-valued ODEs on matrix manifolds with fixed rank}
\label{sec:appendix-projected2D} 
To make Lemma \ref{trunc-smooth-thm} concrete,
in this Appendix we write down 
${\mathfrak T}_{\bf r}^{\text{best}}$ and its
Jacobian ${\cal P}_{{\bf f}}$ for problems where
${\bf f}\in\mathbb{R}^{n_1 \times n_2}$ is a matrix.
In this situation, the tree rank $\bf r$ is just a
single integer $r$. One can see from the accuracy 
inequalities for best truncation proven in
\cite{grasedyck2010hierarchical}
that the ${\mathfrak T}_{r}^{\text{best}}$ is obtained
from truncating the smallest $\text{min}(n_1,n_2) - r$
singular values and singular vectors.
For simplicity, we will write down the best truncation 
scheme for \eqref{mol-ode} using the Euler forward method. 
This gives
\begin{equation}
\label{step-trunc-2d}
{\bf f}_{k+1} = {\mathfrak T}_{r}^{\text{best}}({\bf f}_k
+\Delta t {\bf N}({\bf f}_k)).
\end{equation}
Assuming that we are fixing rank to be the same as the
initial condition for all $k$, we have that
${\mathfrak T}_{r}^{\text{best}}({\bf f}_k) = {\bf f}_k$.
Now we can apply SVD perturbation theory
\cite{liu2008first,stewart1998perturbation} to express 
the best truncation operator in terms of a power series 
expansion in $\Delta t$. Representing
our decomposition as the a tuple of matrices
$({\bf \Sigma}_k,{\bf Q}_k,{\bf V}_k)$, where
${\bf f}_k={\bf Q}_k {\bf \Sigma}_k{\bf V}_k^\top$
and ${\bf f}_{k+1}={\bf Q}_{k+1} {\bf \Sigma}_{k+1}{\bf V}_{k+1}^\top$ is the reduced singular value decomposition, we have that
\begin{align}
{\bf \Sigma}_{k+1} &= {\bf \Sigma}_k + 
\Delta t
\text{diag}({\bf Q}_k^\top {\bf N}({\bf f}_k)
 {\bf V}_k) 
+ O(\Delta t^2) ,
\\ 
{\bf Q}_{k+1}&=
{\bf Q}_k +
\Delta t
{\bf Q}_k(
{\bf H}_k\odot(
{\bf Q}_k^\top
{\bf N}({\bf f}_k){\bf V}_k
{\bf \Sigma}_k + {\bf \Sigma}_k{\bf V}_k^\top
{\bf N}({\bf f}_k)^\top{\bf Q}_k)
)\nonumber \\
 &\qquad\ \  +
\Delta t
({\bf I} - {\bf Q}_k{\bf Q}_k^\top)
{\bf N}({\bf f}_k){\bf V}_k{\bf \Sigma}_k^{-1}
+ O(\Delta t^2), \\
{\bf V}_{k+1}&=
{\bf V}_k +
\Delta t
{\bf V}_k(
{\bf H}_k\odot(
{\bf \Sigma}_k{\bf Q}_k^\top
{\bf N}({\bf f}_k){\bf V}_k
 + {\bf V}_k^\top
{\bf N}({\bf f}_k)^\top{\bf Q}_k{\bf \Sigma}_k)
)\nonumber \\
 &\qquad\ \   +
\Delta t
({\bf I} - {\bf V}_k{\bf V}_k^\top)
{\bf N}({\bf f}_k)^\top{\bf Q}_k{\bf \Sigma}_k^{-1}
+ O(\Delta t^2).
\end{align}
Here, $\odot$ denotes is the
element-wise (Hadamard) product of matrices, and the matrix
\begin{equation}
\begin{cases}
{\bf H}_k[i,j]=
1/({\bf \Sigma}_k[j,j]^2 -{\bf \Sigma}_k[i,i]^2), & i\neq j, \\
{\bf H}_k[i,j]=0, & i = j,
\end{cases}
\end{equation}
is skew-symmetric and stores information about the
differences of the singular values.
The $\text{diag}(\cdot )$ operation zeros out all
elements off of the diagonal.
The tangent space projection operator is the coefficient
of the $\Delta t$ terms. From here, we can see that
the  evolution equation corresponding to \eqref{step-trunc-2d} is
\begin{align}
\frac{\text{d}}{\text{d} t}{\bf \Sigma} &= 
\text{diag}({\bf Q}^\top {\bf N}({\bf f} )
 {\bf V} )  ,
\\ 
\frac{\text{d}}{\text{d} t}{\bf Q}&=
{\bf Q} (
{\bf H} \odot(
{\bf Q}^\top
{\bf N}({\bf f} ){\bf V} 
{\bf \Sigma}  + {\bf \Sigma} {\bf V}^\top
{\bf N}({\bf f} )^\top{\bf Q} )
)  +
({\bf I} - {\bf Q} {\bf Q}^\top)
{\bf N}({\bf f} ){\bf V} {\bf \Sigma} ^{-1},
 \\
\frac{\text{d}}{\text{d} t}{\bf V}&=
{\bf V} (
{\bf H} \odot(
{\bf \Sigma} {\bf Q}^\top
{\bf N}({\bf f} ){\bf V} 
 + {\bf V} ^\top
{\bf N}({\bf f} )^\top{\bf Q} {\bf \Sigma} )
) +
({\bf I} - {\bf V} {\bf V} ^\top)
{\bf N}({\bf f} )^\top{\bf Q} {\bf \Sigma} ^{-1}.
\end{align}
By setting ${\bf U} = {\bf Q}{\bf \Sigma}$ 
It can be verified that the pair
$({\bf U},{\bf V})$ satisfy the dynamically bi-orthogonal
equations of \cite{cheng2013dynamically}.
It should be noted that this is not the only 
parametrization of the fixed-rank
solution ${\bf f} = {\bf Q}{\bf \Sigma}{\bf V}^\top$.
Of particular interest is the closely related projection
method given by the DDO approximation 
\begin{align}
\frac{\text{d}}{\text{d} t}{\bf A} &= 
{\bf W}^\top {\bf N}({\bf f} )
 {\bf B}   ,
\\ 
\frac{\text{d}}{\text{d} t}{\bf W}&=
({\bf I} - {\bf W} {\bf W}^\top)
{\bf N}({\bf f} ){\bf B} {\bf A} ^{-1},
 \\
\frac{\text{d}}{\text{d} t}{\bf B}&=
({\bf I} - {\bf B} {\bf B} ^\top)
{\bf N}({\bf f} )^\top{\bf W} {{\bf A}^{-\top}}.
\end{align}
Which is equivalent to the SVD equations above
in the sense that
\begin{equation}
{\bf W}(t){\bf A}(t){\bf B}^{\top}(t)
=
{\bf f}(t)
=
{\bf Q}(t){\bf \Sigma}(t){\bf V}^{\top}(t)
\end{equation}
as long as the singular values are distinct and the equation
holds at $t=0$. A comparison of methods for fixed rank 
initial value problems is given in \cite{musharbash2015error}.

\section*{Declarations}
\noindent 
{\em Funding:} 
This research was supported by the U.S. Air 
Force Office of Scientific Research (AFOSR) 
grant FA9550-20-1-0174 and by the U.S. Army 
Research Office (ARO) grant W911NF-18-1-0309.

\vspace{0.3cm}
\noindent 
{\em Data availability statement:} 
The datasets generated during and/or analysed 
during the current study are available from the 
corresponding author on reasonable request.

\vspace{0.3cm}
\noindent 
{\em Conflicts of interest:} 
The authors declare that they have no known competing financial interests or personal relationships that could have appeared to influence the work reported in this paper.

\vspace{0.3cm}
\noindent 
{\em Code availability:} 
The code generated during the current study is 
available from the corresponding author on 
reasonable request.

\bibliography{main}
\bibliographystyle{spmpsci}

\end{document}